\documentclass{article}
\usepackage{graphicx} 
\usepackage{amssymb,latexsym}
\usepackage{amscd}
\usepackage{euscript}
\usepackage{amsmath}
\usepackage{color}
\usepackage{tikz-cd}
\usepackage[all,cmtip]{xy}
\usepackage{comment}

\newtheorem{definition}{\bf Definition}[section]
\newtheorem{remark}{\bf Remark}[section]
\newtheorem{lemma}{\bf Lemma}[section]
\newtheorem{theorem}{\bf Theorem}[section]
\newtheorem{corollary}{\bf Corollary}[section]
\newtheorem{proposition}{\bf Proposition}[section]
\newtheorem{example}{\bf Example}[section]

\newenvironment{proof}
{\par\noindent\textbf{Proof. }\rm}
{\hfill$\boldsymbol{\square}$\par\vspace{0.3cm}}

\title{From Multirelations to Meet-Relations:
A Relational Duality for Semilattices with Adjunctions}
 \author{B. Gimenez, W. Zuluaga}
\date{}

\begin{document}

\maketitle

\begin{abstract}
\noindent
We develop a relational duality for semilattices with adjunctions (SLatas) based on binary meet-relations. First, we introduce the category of MoS-spaces and establish a dual equivalence with modal semilattices. Then, by means of A-relations, we define the category $\mathsf{RelSP}$ and prove a dual equivalence between $\mathsf{SLata}$ and $\mathsf{RelSP}$. To compare this framework with the multirelational semantics previously developed for SLatas, we introduce the notion of normal mS-space and show that, under this condition, the multirelational structure can be canonically recovered from a meet-relation, and conversely. As a consequence, we prove that the categories $\mathsf{RelSP}$ and $\mathsf{SLataSp}$ are isomorphic.
\end{abstract}

\section{Introduction}

Duality theory provides a fundamental bridge between algebraic structures and their associated semantic models. By establishing contravariant correspondences between algebraic categories and categories of structured spaces, dualities allow algebraic phenomena to be studied through topological, order-theoretic, or relational methods.  In the specific setting of semilattices, this perspective has proven particularly fruitful: the duality developed in \cite{CG} between semilattices and S-spaces provides a flexible framework for representing algebraic expansions, while subsequent work extended this approach to monotone semilattices and semilattices with adjunctions through multirelational structures \cite{P W,GPZ}.

Despite these developments, the use of multirelations---that is, subsets of
$
X\times \mathcal{P}(X)
$
instead of ordinary binary relations
$
X\times X
$
---comes at a conceptual and technical cost. While multirelations faithfully encode monotone operators and adjoint pairs at the dual level, they also move the semantics to a genuinely higher-order setting, obscuring the first-order character that is typical of classical relational semantics and making the associated dual structures more difficult to manipulate. This naturally leads to the following question: to what extent can the multirelational component in dualities for expanded semilattices be replaced by purely binary relational data without losing the underlying categorical information?
\medskip

The present paper answers this question affirmatively for the case of semilattices with adjunctions (SLatas), namely algebras
$
\langle A,i,d\rangle
$
where the unary operations $i$ and $d$ form an adjoint pair $i\dashv d$. Our main contribution is to show that, under a suitable normality condition on the associated dual spaces, the multirelational semantics introduced in \cite{GPZ} can be canonically reconstructed from binary meet-relations. More precisely, we define a category $\mathsf{RelSP}$ of relational S-spaces and establish a dual equivalence between $\mathsf{SLata}$ and $\mathsf{RelSP}$. Furthermore, we prove that $\mathsf{RelSP}$ is not merely equivalent to the multirelational category $\mathsf{SLataSp}$ introduced in \cite{GPZ}, but actually isomorphic to it.

The categorical relationships developed throughout the paper are summarized in the following diagram:
\[
\xymatrix{
\mathsf{SLataSp}
\ar@/^/[rr]^{\mathbf{Q}}
&&
\mathsf{RelSP}
\ar@/^/[ll]^{\mathbf{P}}
\\
&
\mathsf{SLata}^{op}
\ar@/^/[ul]
\ar@/_/[ur]_{\mathbf{M}}
&
}
\]

The lower part of the diagram displays two dual representations of semilattices with adjunctions: on the left, the multirelational semantics developed in \cite{GPZ}, and on the right, the relational semantics based on meet-relations introduced in the present paper. The upper part shows that the corresponding categories of spaces are in fact isomorphic.

Our strategy proceeds in two stages. First, we develop a relational semantics for modal semilattices, namely semilattices equipped with a meet-preserving operator. To this end, we introduce the category $\mathsf{MoSsp}$ of MoS-spaces and establish its dual equivalence with the algebraic category $\mathsf{MoSL}$. This provides the fundamental relational machinery---meet-relations, compatible morphisms, and the associated functorial constructions---that will later be extended to the adjoint setting.

In the second stage, we enrich this framework in order to represent adjoint pairs on semilattices. For this purpose, we introduce A-relations (adjoint-relations) and adjoint-preserving morphisms, obtaining the category $\mathsf{RelSP}$ and the corresponding duality for SLatas. We then introduce the notion of normal $mS$-space and prove that every relevant multirelational structure can be canonically recovered from an associated meet-relation. This ultimately yields the main result of the paper: the categories $\mathsf{SLataSp}$ and $\mathsf{RelSP}$ are isomorphic.

The paper is organized as follows. Section \ref{Preliminaries} reviews the necessary background concerning semilattices, S-spaces, monotone semilattices, and SLatas. In particular, we recall the topological duality for semilattices from \cite{CG} together with its multirelational extensions developed in \cite{P W,GPZ}. Section \ref{A Relational Duality for SLata} develops the relational duality for modal semilattices and extends it to semilattices with adjunctions via A-relations, culminating in the dual equivalence between $\mathsf{SLata}$ and $\mathsf{RelSP}$. Section \ref{sec: The isomorphism} contains the main technical contribution of the paper. There we introduce normal $mS$-spaces and prove the isomorphism between $\mathsf{SLataSp}$ and $\mathsf{RelSP}$. Finally, we conclude with a discussion on the algebraic and semantic meaning of the constructions introduced throughout the paper.

\section{Preliminaries}\label{Preliminaries}
The aim of this section is twofold. On the one hand, we fix the notation and terminology used throughout the paper. On the other hand, we review the dual frameworks underlying our approach, namely the topological duality for semilattices and its multirelational extensions to monotone semilattices and SLatas.
\medskip

We begin with some basic notation. Let $f:A\to B$ be a function, and let $U\subseteq A$ and $V\subseteq B$. We denote by $f[U]$ and $f^{-1}[V]$ the direct and inverse images of $U$ and $V$ under $f$, respectively. Let $\langle X, \leq\rangle$ be a partially ordered set. For every $x \in X$, we write $\uparrow x$ for the set $\{y \in X : x \leq y\}$. A subset $U \subseteq X$ is called an \emph{upset} if, whenever $x \leq y$ and $x \in U$, then $y \in U$. The set of upsets of $X$ is denoted by $\mathrm{Up}(X)$. The notion of \emph{downset} is defined dually. A non-empty subset $Z \subseteq X$ is called \emph{dually directed} if for all $x,y \in Z$ there exists $z \in Z$ such that $z \leq x$ and $z \leq y$. A \emph{meet-semilattice with greatest element}, or simply a \emph{semilattice}, is an algebra $\mathbf{A} = \langle A, \wedge, 1 \rangle$ such that $\wedge$ is idempotent, commutative, and associative, and $a \wedge 1 = a$ for all $a \in A$. The associated order is given by $a \leq b$ if and only if $a = a \wedge b$. It is well known that $\langle \mathrm{Up}(X), \cap, X\rangle$ is a semilattice.

\subsection{Topological duality for semilattices}

Let $\mathbf{A}$ be a semilattice. A subset $F \subseteq A$ is a \emph{filter} if it is an upset, $1 \in F$, and $a,b \in F$ implies $a \wedge b \in F$. We denote the set of filters of $\mathbf{A}$ by $\mathrm{Fi}(\mathbf{A})$. A proper filter $F$ is \emph{irreducible} if, whenever $F = F_1 \cap F_2$ for filters $F_1, F_2$, then $F = F_1$ or $F = F_2$. We write $\mathcal{X}(\mathbf{A})$ for the set of irreducible filters of $\mathbf{A}$. A non-empty subset $I \subseteq A$ is called an \emph{order-ideal} of $\mathbf{A}$ if it is a directed downset. That is, $I$ is a downset such that every pair of elements $a,b\in I$ has an upper bound in $I$. We denote by $\mathrm{Id}(\mathbf{A})$ the set of all order-ideals of $\mathbf{A}$.

Consider the poset $(\mathcal{X}(\mathbf{A}), \subseteq)$ and the map $\beta_{\mathbf{A}} \colon A \to \mathrm{Up}(\mathcal{X}(\mathbf{A}))$ given by
\[
\beta_{\mathbf{A}}(a) = \{P \in \mathcal{X}(\mathbf{A}) : a \in P\}.
\]
It was shown in \cite{CG} that $\mathbf{A}$ is isomorphic to the subalgebra of $\langle\mathrm{Up}(\mathcal{X}(\mathbf{A})), \cap, \mathcal{X}(\mathbf{A})\rangle$ with universe $\beta_{\mathbf{A}}[A]$. Throughout the paper we omit the subscript when no confusion arises.

\medskip

By a \emph{topological space} we mean a pair $\langle X,\mathcal{K}\rangle$, where $\mathcal{K} \subseteq \mathcal{P}(X)$ is a subbase such that $X = \bigcup \mathcal{K}$. We set
\[
S(X) = \{U^c : U \in \mathcal{K}\}.
\]
We denote by $\mathcal{C}_{\mathcal{K}}(X)$ the closure system generated by $S(X)$, that is,
\[
\mathcal{C}_{\mathcal{K}}(X) = \left\{ \bigcap \mathcal{A} : \mathcal{A} \subseteq S(X) \right\}.
\] 

\

Given $Y \subseteq X$, a family $\mathcal{J} \subseteq S(X)$ is called a \emph{$Y$-family} if for all $A,B \in \mathcal{J}$ there exist $H,C \in S(X)$ such that $Y \subseteq H$, $C \in \mathcal{J}$, $A \cap H \subseteq C$, and $B \cap H \subseteq C$.

\

An \emph{$S$-space} \cite{CG} is a topological space $\langle X,\mathcal{K}\rangle$ satisfying:
\begin{itemize}
    \item[(S1)] $\langle X,\mathcal{K}\rangle$ is $T_0$ and $X = \bigcup \mathcal{K}$;
    \item[(S2)] $\mathcal{K}$ is a subbase of compact open sets, closed under finite unions, and $\emptyset \in \mathcal{K}$;
    \item[(S3)] if $x \in U \cap V$ with $U,V \in \mathcal{K}$, then there exist $W,D \in \mathcal{K}$ such that $x \notin W$, $x \in D$, and $D \subseteq (U \cap V)\cup W$;
   \item[(S4)] If $Y \in \mathcal{C}_{\mathcal{K}}(X)$ and $\mathcal{J} \subseteq S(X)$ is a $Y$-family such that $Y \cap A^c \neq \emptyset$ for all $A \in \mathcal{J}$, then 
\[
Y \cap \bigcap \{A^c : A \in \mathcal{J}\} \neq \emptyset.
\]
\end{itemize}

Let $\langle X, \mathcal{K}\rangle$ be an S-space and let $Y \subseteq X$. We write $\text{cl}_{\mathcal{K}}(Y)$ for the topological closure of $Y$. That is, 
\[\text{cl}_{\mathcal{K}}(Y)=\bigcap\{U \in S(X) \colon Y\subseteq U\}.\]
In particular, if $Y = \{y\}$, then we simply denote $\text{cl}_\mathcal{K}(\{y\})$ by $\text{cl}_\mathcal{K}(y)$. Observe that by (S1), $X$ can be endowed with a partial order defined by $x \sqsubseteq y$ if and only if $x \in \text{cl}_\mathcal{K}(y)$. Such an order is called the \emph{specialization order} of $X$. We write $\sqsupseteq$ for the dual of the specialization order. Observe that, relative to the poset $\langle X, \sqsupseteq\rangle$, for every $x\in X$ we have that $\uparrow x=\text{cl}_{\mathcal{K}}(x)$. If $\mathbf{A}$ is a semilattice, then both $\sqsupseteq$ and $\subseteq$, coincide on $\mathcal{X}(\mathbf{A})$. 
\\

If $\langle X,\mathcal{K}\rangle$ is an $S$-space, then $\mathbf{S}(X) = \langle S(X), \cap, X\rangle$ is a semilattice. Conversely, if $\mathbf{A}$ is a semilattice, then $\langle\mathcal{X}(\mathbf{A}), \mathcal{K}_{\mathbf{A}}\rangle$ is an $S$-space, where $\mathcal{K}_{\mathbf{A}} = \{\beta(a)^c : a \in A\}$, and $\beta \colon A \to S(\mathcal{X}(\mathbf{A}))$ is an isomorphism. Moreover, for every $S$-space $(X,\mathcal{K})$, the map $H_X \colon X \to \mathcal{X}(\mathbf{S}(X))$ given by
\begin{equation}\label{eq: H_X}
H_X(x) = \{A \in S(X) : x \in A\}    
\end{equation}
is a homeomorphism (see \cite{CG}).

\medskip

Let $\mathsf{Rel}$ be the category of sets and binary relations. There is a faithful functor $\Box \colon \mathsf{Rel}^{op} \to \mathsf{Set}$ given by $\Box(X)=\mathcal{P}(X)$ and, for $T \subseteq X \times Y$,
\[
\Box_T(U) = \{x \in X : T(x) \subseteq U\}.
\]

A \emph{meet-relation} between $S$-spaces $\langle X_1,\mathcal{K}_1\rangle$ and $\langle X_2,\mathcal{K}_2\rangle$ is a relation $T \subseteq X_1 \times X_2$ such that:
\begin{enumerate}
    \item $\Box_T(U) \in S(X_1)$ for all $U \in S(X_2)$;
    \item $T(x) = \displaystyle \bigcap \{U \in S(X_2) : T(x) \subseteq U\}$ for all $x \in X_1$.
\end{enumerate}

Given $S$-spaces $\langle X_i,\mathcal{K}_i\rangle$ for $i=1,2,3$, and meet-relations $R \subseteq X_1 \times X_2$ and $T \subseteq X_2 \times X_3$, their composition is defined by
\begin{equation}\label{Definicion composition meet-relations}
T \ast R := \{(x,z) \in X_1 \times X_3 : \forall U \in S(X_3)\, ((T \circ R)(x) \subseteq U \Rightarrow z \in U)\}.
\end{equation}
This operation is associative. If $\sqsupseteq$ denotes the dual specialization order, then $T \ast \sqsupseteq = T$ and $\sqsupseteq \ast R = R$. Hence, $S$-spaces with meet-relations form a category $\mathsf{Sspa}$. Let $\mathsf{S}$ be the category of semilattices and homomorphisms. We have:

\begin{theorem}[\cite{CG}]\label{theo: dualidad semireticulos}
The categories $\mathsf{S}$ and $\mathsf{Sspa}$ are dually equivalent.
\end{theorem}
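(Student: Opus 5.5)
We would establish the duality by exhibiting two contravariant functors and natural isomorphisms between their composites and the identities, using the object-level facts recalled above. On objects, the functor $\mathbf{S}\colon \mathsf{Sspa}\to\mathsf{S}^{op}$ sends $\langle X,\mathcal{K}\rangle$ to $\mathbf{S}(X)=\langle S(X),\cap,X\rangle$. In the other direction, $\mathcal{X}\colon \mathsf{S}\to\mathsf{Sspa}^{op}$ sends $\mathbf{A}$ to $\langle \mathcal{X}(\mathbf{A}),\mathcal{K}_{\mathbf{A}}\rangle$.

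On morphisms, a meet-relation $T\subseteq X_1\times X_2$ is sent to $\Box_T\colon S(X_2)\to S(X_1)$. This map is well defined by condition 1, and it preserves $\cap$ and the top element because $\Box_T(U\cap V)=\Box_T(U)\cap\Box_T(V)$ and $\Box_T(X_2)=X_1$. A homomorphism $h\colon\mathbf{A}\to\mathbf{B}$ is sent to the relation $R_h\subseteq\mathcal{X}(\mathbf{B})\times\mathcal{X}(\mathbf{A})$ given by
\[
(P,Q)\in R_h \iff h^{-1}[P]\subseteq Q .
\]
We then verify that $R_h$ is a meet-relation. The key identity is $\Box_{R_h}(\beta_{\mathbf{A}}(a))=\beta_{\mathbf{B}}(h(a))$, which gives condition 1. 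One inclusion is immediate. The other requires that if $h(a)\notin P$, then some irreducible $Q\supseteq h^{-1}[P]$ has $a\notin Q$. This is the prime-filter-type separation lemma for semilattices from \cite{CG}: a filter disjoint from an order-ideal extends to an irreducible filter disjoint from it, applied here to the filter $h^{-1}[P]$ and the ideal ${\downarrow}a$. Condition 2 reduces to showing that $R_h(P)=\{Q : h^{-1}[P]\subseteq Q\}$ is the intersection of the basic closed sets $\beta(a)$ containing it, namely those with $a\in h^{-1}[P]$. This holds because $Q$ is a filter.

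Next we check functoriality. Identities are respected because $\Box_{\sqsupseteq}=\mathrm{id}$ on $S(X)$ and $R_{\mathrm{id}}=\subseteq=\sqsupseteq$ on $\mathcal{X}(\mathbf{A})$. For composition we show $\Box_{T\ast R}=\Box_R\circ\Box_T$ on $S(X_3)$. Indeed, $(T\ast R)(x)$ is the closure of $(T\circ R)(x)$ in the closure system generated by $S(X_3)$. So for $U\in S(X_3)$ we have $(T\ast R)(x)\subseteq U$ iff $(T\circ R)(x)\subseteq U$, which is equivalent to $R(x)\subseteq\Box_T(U)$. We also show $R_{g\circ h}=R_h\ast R_g$ by evaluating both sides through $\Box$. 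This step depends on $\Box$ being faithful on meet-relations: condition 2 ensures that a meet-relation is determined by the values $\Box_T(U)$ for $U\in S$.

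Finally we supply the natural isomorphisms. The map $\beta_{\mathbf{A}}\colon \mathbf{A}\to\mathbf{S}(\mathcal{X}(\mathbf{A}))$ is an isomorphism, and naturality in $\mathbf{A}$ is exactly the identity $\Box_{R_h}\circ\beta_{\mathbf{A}}=\beta_{\mathbf{B}}\circ h$ proved above. On the space side, the homeomorphism $H_X$ of (\ref{eq: H_X}) must be converted into an isomorphism in $\mathsf{Sspa}$, since morphisms there are relations. We take the relation $\{(x,P) : H_X(x)\subseteq P\}$, that is, the graph of $H_X$ composed with $\sqsupseteq$, with inverse $\{(P,x) : P\subseteq H_X(x)\}$. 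We then check that these are meet-relations composing under $\ast$ to $\sqsupseteq$, and that they are natural with respect to every meet-relation $T$.

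We expect this last naturality to be the main obstacle: it requires showing $T$ coincides with $R_{\Box_T}$ transported along $H$. Comparing $\Box$-images gives equality on the sets $U\in S$. To conclude equality of the relations themselves, we invoke condition 2 together with the fact that $H_X$ maps $S(X)$ bijectively onto the basic closed sets $\beta(U)$.
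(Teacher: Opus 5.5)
Your proposal is correct and is essentially the argument from \cite{CG} that the paper relies on. The paper does not reprove it, but your outline matches both the results it cites and its own modal analogue (Proposition~\ref{iso rs espacios}): the functors $X\mapsto\mathbf{S}(X)$, $T\mapsto\Box_T$ and $\mathbf{A}\mapsto\mathcal{X}(\mathbf{A})$, $h\mapsto N_h$ (your $R_h$); the identities $\Box_{N_h}\circ\beta_{\mathbf{A}}=\beta_{\mathbf{B}}\circ h$ and $\Box_{T\ast R}=\Box_R\circ\Box_T$; faithfulness of $\Box$ on meet-relations via condition 2; and the natural isomorphisms $\beta_{\mathbf{A}}$ and $I_X$. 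The step you flag as the main obstacle is in fact immediate: $(H_X(x),H_Y(y))\in N_{\Box_T}$ iff $y\in\bigcap\{V\in S(Y): T(x)\subseteq V\}$, and this intersection equals $T(x)$ by condition 2.
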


\subsection{Multi-relational duality for monotone semilattices}

Let $\mathbf{A}$ be a semilattice. A map $m \colon A \to A$ is said to be \emph{monotone} if it preserves the underlying order of $\mathbf{A}$. Equivalently, $m$ is monotone if, for all $a,b \in A$,
\[
m(a \wedge b) \leq m(a) \wedge m(b).
\]
A \emph{monotone semilattice} is a pair $\langle\mathbf{A}, m\rangle$ where $\mathbf{A}$ is a semilattice and $m$ is a monotone operation on $\mathbf{A}$. It is straightforward to verify that the class of monotone semilattices forms a variety.

\medskip

Given two monotone semilattices $\langle\mathbf{A}, m\rangle$ and $\langle\mathbf{B}, n\rangle$, a semilattice homomorphism $h \colon \mathbf{A} \to \mathbf{B}$ is called a \emph{monotone homomorphism} if
\[
h(m(a)) = n(h(a)) \quad \text{for all } a \in A.
\]
Morphisms compose via the standard composition of functions, and the identity morphisms are precisely the identity maps. We write $\mathsf{mMS}$ for both the category of monotone semilattices with monotone homomorphisms and the corresponding variety.

\medskip

We now turn to the relational structures used to represent monotone operations on the dual side. Let $\langle X,\mathcal{K} \rangle$ be an $S$-space. A subset $Z \subseteq X$ is called a \emph{subbasic saturated subset} if there exists a dually directed family $\mathcal{L} \subseteq \mathcal{K}$ (w.r.t. $\subseteq$) such that
\[
Z = \bigcap \mathcal{L}.
\]
We denote by $\mathcal{Z}(X)$ the set of subbasic saturated subsets of $X$.

\begin{remark}\label{rem: Remark1 CMZ}
    Let $Z\in \mathcal{Z}(X)$, with $Z=\bigcap\mathcal{L}$ for some directed family $\mathcal{L}\subseteq \mathcal{K}$. If $Y\in \mathcal{C}_{\mathcal{K}}(X)$ satisfies $Y\cap Z=\emptyset$, then by Remark 1 of \cite{P W} and condition (S4), there exists $U\in \mathcal{L}$ such that $Y\cap U=\emptyset$.
\end{remark}

A \emph{multirelation on a set $X$} is a subset of the Cartesian product $X \times \mathcal{P}(X)$, that is, a set of pairs $(x,Y)$ with $x \in X$ and $Y \subseteq X$.

\medskip

An \emph{mS-space} is a structure $\langle X, \mathcal{K}, R \rangle$, where $\langle X, \mathcal{K} \rangle$ is an $S$-space and $R \subseteq X \times \mathcal{Z}(X)$ is a multirelation such that:
\begin{itemize}
    \item[(m1)] $m_R(U) = \{ x \in X : R(x)\subseteq L_{U} \} \in S(X)$ for all $U \in S(X)$,
    \item[(m2)] $R(x) = \bigcap\{ L_U : U \in S(X) \text{ and } x \in m_R(U) \}$ for all $x \in X$,
\end{itemize}
where, for every $U \in S(X)$, $L_{U} = \{ Z \in \mathcal{Z}(X) \colon Z \cap U \neq \emptyset \}$.

\medskip

Let $\langle X_1, \mathcal{K}_1, R_1 \rangle$ and $\langle X_2, \mathcal{K}_2, R_2 \rangle$ be $mS$-spaces. A meet-relation $T \subseteq X_1 \times X_2$ is called a \emph{monotone meet-relation} if the following diagram commutes:
\[
\xymatrix{
S(X_2) \ar[r]^{\Box_T} \ar[d]_{m_{R_2}} & S(X_1) \ar[d]^{m_{R_1}} \\
S(X_2) \ar[r]_{\Box_T} & S(X_1)
}
\]

Moreover, the composition $\ast$ defined in \eqref{Definicion composition meet-relations} is closed on monotone meet-relations (see \cite{P W}). Since $\ast$ is associative and the dual specialization order yields identity morphisms, mS-spaces together with monotone meet-relations form a category, denoted by $\mathsf{mSp}$. Hence:

\begin{theorem}[\cite{P W}]\label{duality Msp mMS}
    The categories $\mathsf{mSp}$ and $\mathsf{mMS}$ are dually equivalent.
\end{theorem}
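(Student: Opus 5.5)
The plan is to build the duality from the semilattice duality of Theorem \ref{theo: dualidad semireticulos} by adding the monotone operator on both sides, and to check that the two contravariant functors restrict and remain quasi-inverse.

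\textbf{From algebras to spaces.} Given $\langle\mathbf{A},m\rangle$, I take its dual S-space $\langle\mathcal{X}(\mathbf{A}),\mathcal{K}_{\mathbf{A}}\rangle$ and define the multirelation
\[
R_m(P)=\{Z\in\mathcal{Z}(\mathcal{X}(\mathbf{A})): \forall a\in A\,(m(a)\in P\Rightarrow Z\cap\beta(a)\neq\emptyset)\}.
\]
Equivalently, $R_m(P)=\bigcap\{L_{\beta(a)}: m(a)\in P\}$, so (m2) will hold once (m1) is in place. For (m1) I need $m_{R_m}(\beta(a))=\beta(m(a))$.
\begin{itemize}
\item The inclusion $\supseteq$ is immediate from the definition of $R_m$.
\item For $\subseteq$: suppose $m(a)\notin P$. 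I want some $Z\in R_m(P)$ with $Z\cap\beta(a)=\emptyset$. The candidate is $Z=\bigcap\{\beta(b)^c : b\in I\}$, where $I$ is the order-ideal generated by $\{b: m(b)\notin P\}$ below $a$, or more precisely $I=\downarrow a$.
\end{itemize}
The natural choice is $Z=\beta(a)^c$ itself. It is subbasic, hence in $\mathcal{Z}$, and it is disjoint from $\beta(a)$. It lies in $R_m(P)$ provided that $m(b)\in P$ implies $\beta(b)\not\subseteq\beta(a)$, i.e. $b\not\le a$. This holds by monotonicity: if $b\le a$ then $m(b)\le m(a)\notin P$. So (m1) is handled by this simple choice. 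By the isomorphism $\beta$, the map $m$ corresponds to $m_{R_m}$.

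\textbf{From spaces to algebras, and the natural isomorphisms.} Given an mS-space, $\langle\mathbf{S}(X),m_R\rangle$ is a monotone semilattice: $m_R$ maps $S(X)$ to $S(X)$ by (m1), and it is monotone because $U\subseteq V$ gives $L_U\subseteq L_V$. On morphisms, the square in the definition of monotone meet-relation says precisely that $\Box_T$ is a monotone homomorphism. Conversely, a monotone homomorphism $h$ yields, via Theorem \ref{theo: dualidad semireticulos}, a meet-relation $T_h$ with $\Box_{T_h}\circ\beta=\beta\circ h$, and then commutativity of the square follows from $h\circ m=n\circ h$ together with $m_{R_m}\beta=\beta m$. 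Functoriality is inherited from the semilattice duality, since composition $\ast$ and identities $\sqsupseteq$ are the same. The unit $\beta$ is already shown to commute with the operators.

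\textbf{Main obstacle.} The hardest step is the counit. I must show that the homeomorphism $H_X$ of \eqref{eq: H_X} transports $R$ to $R_{m_R}$, i.e.
\[
Z\in R(x)\iff \widehat H_X(Z)\in R_{m_R}(H_X(x)),
\]
where $\widehat H_X$ is the induced bijection on subbasic saturated sets. By (m2), $R(x)$ is determined by the sets $m_R(U)$ containing $x$, and $R_{m_R}(H_X(x))$ is defined by exactly the same data. The real work is therefore to show that $H_X$ induces a bijection $\mathcal{Z}(X)\to\mathcal{Z}(\mathcal{X}(\mathbf{S}(X)))$ preserving the condition $Z\cap U\neq\emptyset$. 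For this I would use compactness through Remark \ref{rem: Remark1 CMZ}: a dually directed intersection that is disjoint from a closed set is already disjoint at some stage, so nonemptiness of $Z\cap U$ is detected by the generating family. Once this is established, the identity meet-relation induced by $H_X$ is a monotone isomorphism in $\mathsf{mSp}$, and the duality follows.
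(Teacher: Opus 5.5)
Your proposal is correct and follows the standard route of \cite{P W}, which the paper cites rather than reproves. Your $R_m$ is exactly the paper's $(P,Z)\in R_m \iff m^{-1}[P]\cap I_{\mathbf{A}}(Z)=\emptyset$, the choice $Z=\beta(a)^c$ correctly yields $m_{R_m}\circ\beta=\beta\circ m$, and the counit is the mS-isomorphism induced by $H_X$ (Lemma 9 of \cite{P W}, which the paper invokes later).

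The step you flag as the main obstacle is in fact immediate. Since $H_X$ is a bijection carrying $\mathcal{K}$ onto $\mathcal{K}_{\mathbf{S}(X)}$, the direct image $Z\mapsto H_X[Z]$ is a bijection $\mathcal{Z}(X)\to\mathcal{Z}(\mathcal{X}(\mathbf{S}(X)))$ with $H_X[Z]\cap H_X[U]=H_X[Z\cap U]$, so no compactness argument is needed. Moreover, for $I_X$ to be an isomorphism in $\mathsf{mSp}$ you only need the commuting square. That follows at once from $\Box_{I_X}=\beta_{\mathbf{S}(X)}^{-1}$ together with the identity $m_{R_{m_R}}\circ\beta_{\mathbf{S}(X)}=\beta_{\mathbf{S}(X)}\circ m_R$, which your first step already proves.
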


\subsection{Multi-relational duality for SLatas}

Let $\langle P,\leq \rangle$ and $\langle Q,\leq\rangle$ be posets, and let $f \colon P \to Q$ and $g \colon Q \to P$ be maps. The pair $(f,g)$ is said to be an \emph{adjoint pair} if, for all $p \in P$ and $q \in Q$,
\[
f(p) \leq q \quad \text{if and only if} \quad p \leq g(q).
\]
In this case, $f$ is the \emph{left adjoint} and $g$ the \emph{right adjoint}, and we write $f \dashv g$. In particular, both maps are monotone. When $P=Q$, the notation $f \dashv g$ indicates \emph{an adjunction on $P$}.

\medskip

Let $\mathbf{A}$ be a semilattice and let $i,d \colon A \to A$ be unary operations. A \emph{semilattice with an adjunction} (briefly, an \emph{SLata}) is an algebra $\langle \mathbf{A}, i, d \rangle$ such that $i \dashv d$.

Given two SLata $\langle \mathbf{A}_1, i_1, d_1\rangle$ and $\langle \mathbf{A}_2, i_2, d_2\rangle$, a map $h \colon A_1 \to A_2$ is an \emph{SLata homomorphism} if it is a semilattice homomorphism preserving the adjoint operations, that is,
\[
h(i_1(a)) = i_2(h(a)) \quad \text{and} \quad h(d_1(a)) = d_2(h(a))
\]
for all $a \in A_1$.

We denote by $\mathsf{SLata}$ both the variety of SLata and the category whose objects are SLata and whose morphisms are SLata morphisms. Morphisms compose via the usual composition of functions, and identities are given by identity maps.

\medskip

An \emph{SLata-space} is a structure $\langle X,\mathcal{K},I,E \rangle$ such that:
\begin{itemize}
    \item[(A1)] $\langle X,\mathcal{K},I\rangle$ and $\langle X,\mathcal{K},E\rangle$ are mS-spaces;
    \item[(A2)] For every $U \in S(X)$, if $x \in U$ then, for every $Z \in E(x)$, there exists $w \in Z$ such that $I(w) \subseteq L_U$;
    \item[(A3)] For every $U \in S(X)$, if for every $Z \in I(x)$ there exists $y \in Z$ such that $E(y) \subseteq L_U$, then $x \in U$.
\end{itemize}

\medskip

Let $\langle X_1,\mathcal{K}_1,I_1,E_1\rangle$ and $\langle X_2,\mathcal{K}_2,I_2,E_2\rangle$ be SLata-spaces. A meet-relation $T \subseteq X_1 \times X_2$ is called an \emph{SLata-relation} if it is monotone as a relation between the $mS$-spaces $\langle X_1,\mathcal{K}_1,I_1\rangle$ and $\langle X_2,\mathcal{K}_2,I_2\rangle$, and also between $\langle X_1,\mathcal{K}_1,E_1\rangle$ and $\langle X_2,\mathcal{K}_2,E_2\rangle$.

\medskip

Given SLata-spaces $\langle X_j, \mathcal{K}_j, I_j, E_j\rangle$ for $j=1,2,3$, and SLata-relations $H \subseteq X_1 \times X_2$ and $T \subseteq X_2 \times X_3$, their composition is defined by $T \circ H := T \ast H$. This operation is well defined and associative. Moreover, for every SLata-space $\langle X, \mathcal{K}, I, E \rangle$, the dual of the specialization order $\sqsupseteq \subseteq X \times X$ is an SLata-relation and acts as the identity.

Consequently, SLata-spaces together with SLata-relations form a category, denoted by $\mathsf{SLataSp}$. Thus:

\begin{theorem}[\cite{GPZ}]\label{duality SLata}
    The categories $\mathsf{SLataSp}$ and $\mathsf{SLata}$ are dually equivalent.
\end{theorem}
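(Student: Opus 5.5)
The statement is Theorem~\ref{duality SLata}, the dual equivalence between $\mathsf{SLataSp}$ and $\mathsf{SLata}$ from \cite{GPZ}. I would obtain it by lifting the two dualities already at hand: the semilattice duality of Theorem~\ref{theo: dualidad semireticulos} and the monotone duality of Theorem~\ref{duality Msp mMS}.

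\textbf{From spaces to algebras.} Given an SLata-space $\langle X,\mathcal{K},I,E\rangle$, condition (A1) makes $m_I$ and $m_E$ monotone operations on $\mathbf{S}(X)$. The candidate algebra is $\langle \mathbf{S}(X), m_I, m_E\rangle$, and I must show $m_I \dashv m_E$.

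The unit inequality $U\subseteq m_E(m_I(U))$ should follow from (A2). If $x\in U$, then every $Z\in E(x)$ contains a point $w$ with $I(w)\subseteq L_U$, that is, $w\in m_I(U)$. Hence $Z\cap m_I(U)\neq\emptyset$ for every $Z\in E(x)$, so $E(x)\subseteq L_{m_I(U)}$ and $x\in m_E(m_I(U))$.

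The counit inequality $m_I(m_E(U))\subseteq U$ is, read literally, exactly (A3).

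Monotonicity together with unit and counit then gives the adjunction. On morphisms, an SLata-relation $T$ is monotone for both $I$ and $E$, so $\Box_T$ commutes with $m_I$ and $m_E$ and is therefore an SLata homomorphism. Functoriality (composition $\ast$ and identities $\sqsupseteq$) is inherited from $\mathsf{mSp}$.

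\textbf{From algebras to spaces.} Given $\langle\mathbf{A},i,d\rangle$, apply the functor of Theorem~\ref{duality Msp mMS} separately to $\langle\mathbf{A},i\rangle$ and to $\langle\mathbf{A},d\rangle$. This yields multirelations $I,E$ on the common S-space $\langle\mathcal{X}(\mathbf{A}),\mathcal{K}_{\mathbf{A}}\rangle$, with $m_I\circ\beta=\beta\circ i$ and $m_E\circ\beta=\beta\circ d$, so (A1) holds.

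\begin{itemize}
\item (A3) is the counit inequality $i(d(a))\le a$ transported along $\beta$.
\item (A2) does not transfer so directly. Transporting $a\le d(i(a))$ only gives: if $P\in\beta(a)$, then $E(P)\subseteq L_{\beta(i(a))}$, i.e.\ every $Z\in E(P)$ meets $\beta(i(a))=m_I(\beta(a))$. That is precisely the existence of $w\in Z$ with $I(w)\subseteq L_{\beta(a)}$. So (A2) in fact follows from the definition of $m_I$, with no separate topological argument.
\end{itemize}

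Morphisms are handled as in the monotone case: the dual meet-relation of an SLata homomorphism $h$ is monotone for both operations because $h$ commutes with $i$ and with $d$.

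\textbf{Natural isomorphisms.} The map $\beta$ is an SLata isomorphism $\mathbf{A}\cong\mathbf{S}(\mathcal{X}(\mathbf{A}))$, since it intertwines $i,d$ with $m_I,m_E$. On the space side, take the relation induced by the homeomorphism $H_X$ of \eqref{eq: H_X}. Theorem~\ref{duality Msp mMS} shows it is a monotone isomorphism in $\mathsf{mSp}$ for $I$ and for $E$ separately, hence an SLata-relation isomorphism. Naturality is inherited componentwise.

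\textbf{Main obstacle.} The delicate point is that the monotone duality reconstructs each multirelation from the operator alone, so one must check that $I$ and $E$ are defined on the same S-space and that the identity and composition in $\mathsf{SLataSp}$ agree with those of $\mathsf{mSp}$ for both multirelations simultaneously. I would handle this by observing that the underlying S-space and the composition $\ast$ never depend on the multirelation. The argument therefore reduces to the pullback of two copies of the monotone duality over the semilattice duality.
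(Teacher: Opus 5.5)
Your proof is correct. The paper gives no proof of its own here, since the theorem is imported from \cite{GPZ}, but your route is the one it implicitly relies on: given (A1), conditions (A2) and (A3) unwind literally to $U\subseteq m_E(m_I(U))$ and $m_I(m_E(U))\subseteq U$, which is the content of Proposition~3.1 of \cite{GPZ} that the paper invokes in the proof of Theorem~\ref{de Rel a SLatasp}, and the rest is two copies of Theorem~\ref{duality Msp mMS} over the common S-space, as you say. The only quibble is your remark that (A2) ``does not transfer so directly'': it transfers exactly as directly as (A3), by the same unwinding you already carried out in the space-to-algebra direction.
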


\section{A Relational Duality for $\mathsf{SLata}$}\label{A Relational Duality for SLata}

\subsection{Relational Duality for SLata}

The main goal of this section is to obtain a relational duality for SLatas, in contrast with the multirelational duality developed in \cite{GPZ}. To achieve this, we develop a purely relational framework in which both adjoint operations are represented at the level of relations on S-spaces. We begin from the duality for modal semilattices, where only the right adjoint operation is taken into account. This modal case plays a pivotal role in the construction, as it provides the underlying relational semantics in terms of meet-relations and establishes the correspondence between semilattice structure and relational structure.

In order to pass from modal semilattices to full SLatas, it is necessary to incorporate the additional structure given by the left adjoint operator. This requires refining both the relational semantics and the notion of morphism, since meet-relations alone are not sufficient to encode adjoint pairs.

The guiding idea is to isolate a class of relations that simultaneously capture the right adjoint and guarantee the existence of a compatible left adjoint at the relational level. This leads to the notion of A-relations and to the category of RelS-spaces, which will serve as the relational counterpart of SLatas.

The rest of the subsection develops this construction and shows that it yields a dual equivalence between SLatas and their relational semantics.

\subsection{Relational Duality for Modal Semilattices}

In this subsection we develop a relational duality for modal semilattices. This construction plays a central role in the sequel, since the duality for SLatas obtained in the next section may be understood as an extension of the modal case from a single operator to a pair of adjoint operators. The results established here provide the basic relational machinery that will be repeatedly used later on.

\begin{definition}
    A \textit{modal semilattice} is an algebra $\langle \mathbf{A}, d \rangle$, where $\mathbf{A}$ is a semilattice and $d: A \to A$ is an operator satisfying the following conditions:  
\begin{enumerate}
    \item $d(1) = 1$;  
    \item $d(a \wedge b) = d(a) \wedge d(b)$ for all $a, b \in A$.  
\end{enumerate}
\end{definition}

We denote by $\mathsf{MoSL}$ the category whose objects are modal semilattices. 
Given two modal semilattices $\langle \mathbf{A}, d \rangle$ and $\langle \mathbf{B}, n \rangle$, 
a semilattice homomorphism $h \colon \mathbf{A} \to \mathbf{B}$ is called a \emph{modal homomorphism} if
\[
  h(d(a)) = n(h(a))
\]
for all $a \in A$. 
The identity morphism on a modal semilattice is the usual identity function, 
and the composition of morphisms in $\mathsf{MoSL}$ is given by the usual composition of functions.
\medskip

In order to formulate a duality for modal semilattices, we enrich $S$-spaces with a meet-relation representing the modal operator at the relational level.

\begin{definition}
    We say that $\langle X,\mathcal{K},T \rangle$ is a Modal S-space (MoS-space, for short) if $\langle X,\mathcal{K}\rangle$ is an S-space 
    and $T\subseteq X \times X$ is a meet-relation.
\end{definition}

The next result shows that meet-relations naturally induce modal operators on the semilattice of subbasic closed sets. Hence, MoS-spaces provide relational representations of modal semilattices.

\begin{proposition}\label{S(X) es modal}
    Let $\langle X, \mathcal{K}, T \rangle$ be a MoS-space. Then,  $\langle \mathbf{S}(X),\Box_T\rangle$ is a modal semilattice.
\end{proposition}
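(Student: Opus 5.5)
We need to check three things: $\mathbf{S}(X)=\langle S(X),\cap,X\rangle$ is a semilattice, $\Box_T$ maps $S(X)$ into $S(X)$, and $\Box_T$ satisfies the two identities in the definition of modal semilattice. The first is recalled in the preliminaries for every S-space. The second is exactly condition (1) in the definition of meet-relation: $\Box_T(U)\in S(X)$ for every $U\in S(X)$, so $\Box_T$ restricts to a unary operation on $S(X)$. Condition (2) of the meet-relation definition plays no role here.

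For $\Box_T(X)=X$, note that for every $x\in X$ we have $T(x)\subseteq X$ trivially. Hence $x\in\Box_T(X)$, and $X$ is the top element $1$ of $\mathbf{S}(X)$.

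For meet-preservation, take $U,V\in S(X)$; note $U\cap V\in S(X)$ since $\mathbf{S}(X)$ is closed under $\cap$. Then
\[
x\in\Box_T(U\cap V)\iff T(x)\subseteq U\cap V\iff T(x)\subseteq U \text{ and } T(x)\subseteq V\iff x\in\Box_T(U)\cap\Box_T(V).
\]
So $\Box_T(U\cap V)=\Box_T(U)\cap\Box_T(V)$, which is the meet in $\mathbf{S}(X)$.

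There is no real obstacle; the only point needing care is that the operation is well defined on $S(X)$, and this is precisely what the first meet-relation axiom guarantees. The remaining identities are purely set-theoretic properties of $\Box_T$ on $\mathcal{P}(X)$.
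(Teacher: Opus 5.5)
Your proof is correct and is essentially the paper's argument. The paper just cites Proposition~3.19 of [CG], namely that $\Box_T$ is a semilattice homomorphism on $S(X)$ for any meet-relation $T$. Your direct check is exactly the content of that result, made self-contained: closure of $S(X)$ under $\Box_T$ from the first meet-relation axiom, plus the set-theoretic identities $\Box_T(X)=X$ and $\Box_T(U\cap V)=\Box_T(U)\cap\Box_T(V)$. You are also right that the second meet-relation axiom plays no role here.
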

\begin{proof}
    The proof follows immediately from the Proposition 3.19 of \cite{CG}.
\end{proof}

\begin{remark}\label{definicion D_m}
Let $\mathbf{A}$ and $\mathbf{B}$ be semilattices, and let $h:A\to B$ be a semilattice homomorphism. By Proposition~3.20 of \cite{CG}, there is an associated meet-relation $N_h \subseteq \mathcal{X}(\mathbf{B}) \times \mathcal{X}(\mathbf{A})$ defined by
\[
(P,Q) \in N_h \quad \Longleftrightarrow \quad h^{-1}[P] \subseteq Q.
\]

In particular, if $\langle \mathbf{A}, d \rangle$ is a modal semilattice, then $d \colon A \to A$ is a semilattice homomorphism. Hence, we may consider the associated meet-relation $N_d \subseteq \mathcal{X}(\mathbf{A}) \times \mathcal{X}(\mathbf{A})$. Consequently, the structure
\[
\langle \mathcal{X}(\mathbf{A}), \mathcal{K}_{\mathbf{A}}, N_d \rangle
\]
is an MoS-space.
\end{remark}

We now introduce the appropriate notion of morphism between MoS-spaces. Given MoS-spaces $\langle X,\mathcal{K}_X,T_1 \rangle$ and $\langle Y,\mathcal{K}_Y,T_2 \rangle$, a meet-relation $M \subseteq X \times Y$ is said to be \emph{compatible} if
\[
M * T_1 \;=\; T_2 * M.
\]

Moreover, Proposition~3.17 of \cite{CG} ensures that for every $S$-space $\langle X, \mathcal{K}\rangle$, the dual specialization order $\sqsupseteq_X$ is itself a meet-relation. Hence, if $\langle X, \mathcal{K}, T\rangle$ is an MoS-space, then
\[
\sqsupseteq_X * T \;=\; T * \sqsupseteq_X.
\]

\begin{proposition}
 Let $\langle X_{i},\mathcal{K}_{i},T_{i}\rangle$, with $i=1,2,3$, be MoS-spaces, and let $M_{1} \subseteq X_{1} \times X_{2}$ and $M_{2} \subseteq X_{2} \times X_{3}$ be compatible meet-relations. Then their composition $M_{2} * M_{1}$ is a compatible meet-relation, i.e.,
\[
(M_{2} * M_{1}) * T_{1} \;=\; T_{3} * (M_{2} * M_{1}).
\]
\end{proposition}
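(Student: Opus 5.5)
The plan is to argue purely categorically, using only associativity of $\ast$ and the two compatibility equations. The paper recalls that $\ast$ is associative on meet-relations between $S$-spaces. It also recalls, from Theorem~\ref{theo: dualidad semireticulos} and \cite{CG}, that the composite of meet-relations is again a meet-relation. So $M_2 \ast M_1 \subseteq X_1\times X_3$ is a meet-relation, and the only thing left to check is the compatibility equation.

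The key computation is a chain of rewritings:
\[
(M_2 \ast M_1)\ast T_1 = M_2\ast (M_1\ast T_1) = M_2 \ast (T_2\ast M_1) = (M_2\ast T_2)\ast M_1 = (T_3\ast M_2)\ast M_1 = T_3\ast(M_2\ast M_1).
\]
The steps are justified as follows:
\begin{enumerate}
\item The first step is associativity.
\item The second uses compatibility of $M_1$, namely $M_1\ast T_1=T_2\ast M_1$.
\item The third is associativity again.
\item The fourth uses compatibility of $M_2$, namely $M_2\ast T_2 = T_3\ast M_2$.
\item The last is associativity once more.
\end{enumerate}
Every relation appearing here ($T_i$, $M_j$, and their composites) is a meet-relation between the appropriate $S$-spaces. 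Hence each application of associativity is legitimate in the category $\mathsf{Sspa}$.

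The only point requiring care is that associativity of $\ast$ is stated for meet-relations. So one must confirm that the intermediate composites, such as $M_1\ast T_1$ and $T_2\ast M_1$, are meet-relations. This follows from the closure of meet-relations under $\ast$ established in \cite{CG}, which is what makes $\mathsf{Sspa}$ a category. If one wanted to avoid citing that fact, the main obstacle would be to verify directly that $\Box_{T\ast R} = \Box_R\circ\Box_T$ on subbasic closed sets. From that identity, both the meet-relation axioms and associativity follow by a routine check using condition 2 of the definition of meet-relation. I expect simply to invoke the category structure of $\mathsf{Sspa}$ instead.
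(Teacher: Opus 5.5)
Your proof is correct and is the paper's argument: the paper's proof says only that the result follows from associativity of $*$ and the compatibility of $M_1$ and $M_2$, and your chain of rewritings spells this out step by step. Your remark that $M_2 * M_1$ is again a meet-relation, because $\mathsf{Sspa}$ is a category, makes explicit a point the paper leaves implicit.
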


\begin{proof}
This follows directly from the associativity of $*$ and the fact that $M_1$ and $M_2$ are compatible meet-relations.
\end{proof}

The previous results show that compatible meet-relations are stable under composition, while the dual specialization order acts as the identity morphism. Consequently, MoS-spaces together with compatible meet-relations form a category, denoted by $\mathsf{MoSsp}$, where composition is given by \eqref{Definicion composition meet-relations}.

\begin{proposition}\label{S(X(A)) es modal}
    Let $\langle \mathbf{A}, d \rangle$ be a modal semilattice. Then, $\langle \mathbf{S}(\mathcal{X}(\mathbf{A})), \Box_{N_d} \rangle$ is a modal semilattice.
\end{proposition}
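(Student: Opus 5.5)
The plan is to reduce the statement to Proposition~\ref{S(X) es modal}. By Remark~\ref{definicion D_m}, $d$ is a semilattice endomorphism of $\mathbf{A}$, since $d(1)=1$ and $d(a\wedge b)=d(a)\wedge d(b)$. Hence Proposition~3.20 of \cite{CG} yields that $N_d\subseteq \mathcal{X}(\mathbf{A})\times\mathcal{X}(\mathbf{A})$ is a meet-relation on the S-space $\langle \mathcal{X}(\mathbf{A}),\mathcal{K}_{\mathbf{A}}\rangle$. So $\langle \mathcal{X}(\mathbf{A}),\mathcal{K}_{\mathbf{A}},N_d\rangle$ is an MoS-space, and Proposition~\ref{S(X) es modal} applied to it immediately gives that $\langle \mathbf{S}(\mathcal{X}(\mathbf{A})),\Box_{N_d}\rangle$ is a modal semilattice.

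For later use in the duality (the isomorphism $\beta$ should be a modal isomorphism), I would also verify the identity
\[
\Box_{N_d}(\beta(a))=\beta(d(a)) \quad \text{for all } a\in A .
\]
Since $S(\mathcal{X}(\mathbf{A}))=\beta[A]$, this gives an alternative direct proof: $\Box_{N_d}(\mathcal{X}(\mathbf{A}))=\mathcal{X}(\mathbf{A})$ trivially, and $\Box_{N_d}$ preserves intersections because $\Box_T$ always does.

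\emph{Proof of the identity.} Fix $P\in\mathcal{X}(\mathbf{A})$.
\begin{itemize}
\item[($\supseteq$)] Suppose $d(a)\in P$. Then $a\in d^{-1}[P]$, so for every $Q$ with $d^{-1}[P]\subseteq Q$ we get $a\in Q$. Thus $N_d(P)\subseteq\beta(a)$.
\item[($\subseteq$)] Suppose $d(a)\notin P$. Then $a\notin d^{-1}[P]$, and $d^{-1}[P]$ is a filter because $d$ is a homomorphism preserving $1$. We need an irreducible filter $Q\supseteq d^{-1}[P]$ with $a\notin Q$.
\end{itemize}

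This separation step is the only real obstacle. For semilattices, the prime-filter-type theorem of \cite{CG} applies: for a filter $F$ and an order-ideal $I$ with $F\cap I=\emptyset$, there is an irreducible filter containing $F$ and disjoint from $I$. I would apply it with $F=d^{-1}[P]$ and $I={\downarrow}a$, which is an order-ideal. This produces the required $Q\in N_d(P)\setminus\beta(a)$, so $P\notin\Box_{N_d}(\beta(a))$.

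Together, the two directions establish the identity, and hence that $\beta$ intertwines $d$ with $\Box_{N_d}$.
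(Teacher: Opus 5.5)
Your proposal is correct and follows essentially the paper's route: the paper also argues that $\Box_{N_d}$ is a semilattice homomorphism on $\mathbf{S}(\mathcal{X}(\mathbf{A}))$ (Proposition~3.19 of \cite{CG}), and uses the identity $\Box_{N_d}(\beta(a))=\beta(d(a))$ from Proposition~3.20 of \cite{CG}. The only difference is that you prove this identity yourself via the filter/order-ideal separation theorem, and you correctly note that $\Box_T$ preserves $\cap$ and the top element for any relation $T$, so the paper's appeal to $d(a\wedge b)=d(a)\wedge d(b)$ is not actually needed.
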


\begin{proof}
    First, note that by Proposition 3.19 of \cite{CG} we have that $\Box_{N_d}\colon \mathbf{S}(\mathcal{X}(\mathbf{A})) \rightarrow \mathbf{S}(\mathcal{X}(\mathbf{A})) $ is a homomorphism of semilattices. Moreover, consider $a, b \in A$ so, $\Box_{N_d}(\beta(a)\cap \beta (b))=\Box_{N_d}(\beta(a \wedge b))$. Then, from Proposition 3.20 of \cite{CG} we have $\Box_{N_d}(\beta(a \wedge b))=\beta (d(a \wedge b))= \beta (d(a)) \cap \beta(d(b))$. Thus, we conclude $\Box_{N_d}(\beta(a)\cap \beta (b))= \Box_{N_d}(\beta(a))\cap \Box_{N_d}(\beta(b))$. The proof that $\Box_{N_d}$ preserves $\beta(1)$ is analogous.
\end{proof}
\begin{proposition}\label{R_h preserva adj derecha}
Let $\langle \mathbf{A},d\rangle$ and $\langle \mathbf{B},n\rangle$ be modal semilattices, and let $h\colon \mathbf{A}\to \mathbf{B}$ be a modal homomorphism. 
Then, the associated meet-relation 
$N_h\subseteq \mathcal{X}(\mathbf{B})\times \mathcal{X}(\mathbf{A})$ satisfies
\[
N_h*N_n=N_d*N_h .
\]
\end{proposition}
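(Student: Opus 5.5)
The plan is to reduce the relational identity to an identity of semilattice homomorphisms via the duality of Theorem~\ref{theo: dualidad semireticulos}. The meet-relations $N_h * N_n$ and $N_d * N_h$ both lie in $\mathcal{X}(\mathbf{B})\times\mathcal{X}(\mathbf{A})$. The composite $h\circ d = n\circ h$ is a single homomorphism $\mathbf{A}\to\mathbf{B}$, so the key step is to show that the construction $h\mapsto N_h$ is functorial with respect to $*$. Concretely, for semilattice homomorphisms $f\colon \mathbf{A}\to\mathbf{B}$ and $g\colon\mathbf{B}\to\mathbf{C}$ we want
\[
N_{g\circ f} \;=\; N_f * N_g .
\]
Given this, we immediately get $N_h*N_n = N_{n\circ h} = N_{h\circ d} = N_d * N_h$, which is the statement.

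The functoriality is presumably part of the content of Theorem~\ref{theo: dualidad semireticulos} in \cite{CG}. If we prove it directly, we use that a meet-relation $T$ is determined by $\Box_T$ on subbasic closed sets. Indeed, by condition 2 in the definition of meet-relation, $T(x)=\bigcap\{U\in S(X_2): x\in\Box_T(U)\}$.

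So it suffices to compare $\Box$'s, in three steps.
\begin{enumerate}
\item By Proposition~3.20 of \cite{CG}, $\Box_{N_h}(\beta_{\mathbf{A}}(a)) = \beta_{\mathbf{B}}(h(a))$, and similarly for $N_n$ and $N_d$.
\item From the definition \eqref{Definicion composition meet-relations} of $*$, we check that $\Box_{T*R}(U)=\Box_R(\Box_T(U))$ for $U\in S(X_3)$. Indeed, $(T*R)(x)$ is the closure of $(T\circ R)(x)$ in the closure system generated by $S(X_3)$. Hence $(T*R)(x)\subseteq U$ iff $(T\circ R)(x)\subseteq U$, iff $x\in\Box_R\Box_T(U)$.
\item Combining the first two steps, for $a\in A$ we obtain
\[
\Box_{N_h*N_n}(\beta(a))=\Box_{N_n}(\beta(h(a)))=\beta(n(h(a)))=\beta(h(d(a)))=\Box_{N_h}(\beta(d(a)))=\Box_{N_d*N_h}(\beta(a)).
\]
\end{enumerate}

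Both composites are meet-relations, since $*$ is closed on meet-relations in $\mathsf{Sspa}$. Every element of $S(\mathcal{X}(\mathbf{A}))$ is of the form $\beta(a)$, because $\beta$ is an isomorphism onto $S(\mathcal{X}(\mathbf{A}))$. So the determination of a meet-relation by its $\Box$ gives equality of the two relations.

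The main obstacle is the second step, $\Box_{T*R}=\Box_R\circ\Box_T$. Some care is needed to see that membership of $(T*R)(x)$ in a subbasic closed set $U$ is equivalent to that of $(T\circ R)(x)$. The point is that $(T*R)(x)$ is exactly the intersection of all $U\in S(X_3)$ containing $(T\circ R)(x)$. One direction is then trivial and the other is by definition.

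A secondary check is the recovery formula $T(x)=\bigcap\{U: x\in\Box_T(U)\}$, which is just condition 2 of a meet-relation rewritten.
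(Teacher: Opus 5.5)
Your proof is correct and follows essentially the paper's route. You reduce to equality of the box operators on $S(\mathcal{X}(\mathbf{A}))=\beta[A]$, using that meet-relations are determined by their boxes (Lemma~3.21 of \cite{CG}) together with $\Box_{T*R}=\Box_R\circ\Box_T$ (Proposition~3.22 of \cite{CG}), and then run the same chain $\beta(n(h(a)))=\beta(h(d(a)))$; the functoriality framing $N_{g\circ f}=N_f*N_g$ is only a repackaging of that computation, an identity the paper itself later quotes from Proposition~3.22 of \cite{CG}.
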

\begin{proof}
    By Lemma~3.21 of \cite{CG}, it suffices to prove that $\Box_{N_h * N_n} = \Box_{N_d * N_h}$. Let $a \in A$. By Proposition~3.22 of \cite{CG}, we have
\[
\Box_{N_h * N_n} = \Box_{N_n} \circ \Box_{N_h}.
\]
Then, by Proposition~3.20 of \cite{CG} and since $h(d(a)) = n(h(a))$ for all $a \in A$, we obtain:
\begin{align*}
\Box_{N_n}(\Box_{N_h}(\beta_\mathbf{A}(a)))
&= \Box_{N_n}(\beta_\mathbf{B}(h(a))) \\
&= \beta_\mathbf{B}(n(h(a))) \\
&= \beta_\mathbf{B}(h(d(a))) \\
&= \Box_{N_h}(\beta_\mathbf{A}(d(a))) \\
&= \Box_{N_h}(\Box_{N_d}(\beta_\mathbf{A}(a))).
\end{align*}
\end{proof}
 \begin{proposition}
     Let $\langle X,\mathcal{K}_X, T_X\rangle$, $\langle Y,\mathcal{K}_Y, T_Y\rangle$ be two MoS-spaces and let $M\subseteq X\times Y$ be a compatible meet-relation. Then, for all $V \in S(Y)$,
     \begin{center}
         $\Box_M(\Box_{T_Y}(V))=\Box_{T_X}(\Box_M(V))$.
     \end{center}
 \end{proposition}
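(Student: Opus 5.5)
The plan is to reduce the claim to the functoriality of $\Box$ with respect to the composition $*$ of meet-relations. Specifically, I will rely on the fact used in the proof of Proposition~\ref{R_h preserva adj derecha}, namely Proposition~3.22 of \cite{CG}. It says that for meet-relations $R\subseteq X_1\times X_2$ and $T\subseteq X_2\times X_3$ between $S$-spaces we have
\[
\Box_{T*R}=\Box_R\circ\Box_T
\]
as maps $S(X_3)\to S(X_1)$. Compatibility of $M$ gives the equality of relations $M*T_X=T_Y*M$ as subsets of $X\times Y$. Applying $\Box$ to both sides then yields the equality of the induced maps on $S(Y)$.

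The key steps, in order, are as follows.

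First, I will check that all four relations involved are meet-relations between the appropriate $S$-spaces. These are $T_X\subseteq X\times X$, $T_Y\subseteq Y\times Y$, $M\subseteq X\times Y$, and hence the composites $M*T_X$ and $T_Y*M$, which lie in $X\times Y$. This is what allows Proposition~3.22 of \cite{CG} to be applied, and it also guarantees that each $\Box$ maps $S(\cdot)$ into $S(\cdot)$.

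Second, I fix $V\in S(Y)$ and compute both sides. On the one hand,
\[
\Box_{M*T_X}(V)=\Box_{T_X}(\Box_M(V)).
\]
On the other hand,
\[
\Box_{T_Y*M}(V)=\Box_M(\Box_{T_Y}(V)).
\]

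Third, since $M*T_X=T_Y*M$, the left-hand sides of these two equations coincide. The two right-hand sides are therefore equal, which is exactly the statement.

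The only point requiring care is the correct order of composition in the formula $\Box_{T*R}=\Box_R\circ\Box_T$, which is contravariant. With the convention of \eqref{Definicion composition meet-relations}, $T*R$ means "first $R$, then $T$". So $M*T_X$ applies $T_X$ first on $X$ and then $M$ into $Y$, and its box is $\Box_{T_X}\circ\Box_M$. Symmetrically, $T_Y*M$ applies $M$ first and then $T_Y$, and its box is $\Box_M\circ\Box_{T_Y}$.

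If one preferred not to cite Proposition~3.22 of \cite{CG}, it could be proved directly. One would show that $\Box_{T*R}(U)=\Box_{T\circ R}(U)$ for $U\in S(X_3)$, using the definition of $*$. Indeed, $(T*R)(x)$ is the closure in $\mathcal{C}_{\mathcal{K}}(X_3)$ of $(T\circ R)(x)$, and it is contained in a subbasic closed $U$ exactly when $(T\circ R)(x)$ is. Then $\Box_{T\circ R}=\Box_R\circ\Box_T$ holds for ordinary relational composition. This direct verification is routine, and I expect no real obstacle beyond it.
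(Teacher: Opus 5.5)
Your proof is correct and is essentially the paper's argument. The paper also applies $\Box_{T*R}=\Box_R\circ\Box_T$ (Proposition 3.22 of \cite{CG}) to both sides of the compatibility identity $M*T_X=T_Y*M$, with the composition order read as you do; the extra citation of Lemma 3.21 of \cite{CG} there (meet-relations are determined by their $\Box$-maps) is only needed for the converse implication, so your version is complete, including your direct check that $(T*R)(x)\subseteq U$ iff $(T\circ R)(x)\subseteq U$ for $U\in S(X_3)$.
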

 \begin{proof}
     Immediate from Lemma 3.21 and Proposition 3.22 of \cite{CG}.
 \end{proof}

\begin{proposition}\label{x(s(X)) es RS espacio}
    Let $\langle X,\mathcal{K},T\rangle$ a MoS-space, then $\langle\mathcal{X}(\mathbf{S}(X)),\mathcal{K}_{\mathbf{S}(X)}, N_{\Box_T}\rangle$ is a MoS-space.
\end{proposition}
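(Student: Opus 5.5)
The plan is to reduce the statement to the algebraic side and then apply Remark~\ref{definicion D_m}. We need two facts: $\langle\mathcal{X}(\mathbf{S}(X)),\mathcal{K}_{\mathbf{S}(X)}\rangle$ is an S-space, and $N_{\Box_T}$ is a meet-relation on it.

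First, by the duality of \cite{CG} recalled in the preliminaries, $\mathbf{S}(X)=\langle S(X),\cap,X\rangle$ is a semilattice, since $\langle X,\mathcal{K}\rangle$ is an S-space. For every semilattice $\mathbf{A}$, the structure $\langle\mathcal{X}(\mathbf{A}),\mathcal{K}_{\mathbf{A}}\rangle$ with $\mathcal{K}_{\mathbf{A}}=\{\beta(a)^c : a\in A\}$ is an S-space. Taking $\mathbf{A}=\mathbf{S}(X)$ gives the underlying S-space $\langle\mathcal{X}(\mathbf{S}(X)),\mathcal{K}_{\mathbf{S}(X)}\rangle$.

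Next, Proposition~\ref{S(X) es modal} shows that $\langle\mathbf{S}(X),\Box_T\rangle$ is a modal semilattice. In particular, $\Box_T\colon S(X)\to S(X)$ is well defined because $T$ is a meet-relation (condition 1). It preserves $X$, since $\Box_T(X)=X$. It preserves binary intersections, since $\Box_T(U\cap V)=\Box_T(U)\cap\Box_T(V)$ follows directly from the definition $\Box_T(U)=\{x : T(x)\subseteq U\}$. Hence $\Box_T$ is a semilattice endomorphism of $\mathbf{S}(X)$.

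We then apply Remark~\ref{definicion D_m}, that is, Proposition~3.20 of \cite{CG}, with $\mathbf{A}=\mathbf{B}=\mathbf{S}(X)$ and $h=\Box_T$. The relation
\[
N_{\Box_T}=\{(P,Q)\in\mathcal{X}(\mathbf{S}(X))^2 : \Box_T^{-1}[P]\subseteq Q\}
\]
is then a meet-relation on $\langle\mathcal{X}(\mathbf{S}(X)),\mathcal{K}_{\mathbf{S}(X)}\rangle$. Concretely, this gives $\Box_{N_{\Box_T}}(\beta(U))=\beta(\Box_T(U))\in S(\mathcal{X}(\mathbf{S}(X)))$ together with the closure condition on $N_{\Box_T}(P)$. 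Combining this with the first step, the triple is an MoS-space.

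There is essentially no obstacle here, because the statement is the special case $\mathbf{A}=\mathbf{S}(X)$, $d=\Box_T$ of Remark~\ref{definicion D_m}. The only point requiring care is that $\Box_T$ really is a homomorphism of $\mathbf{S}(X)$ into itself, i.e. that it maps $S(X)$ into $S(X)$. This is exactly condition 1 in the definition of meet-relation, and Proposition~\ref{S(X) es modal} records it.
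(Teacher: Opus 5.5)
Your proposal is correct and is the same argument as the paper's. The paper's proof also cites Proposition~\ref{S(X) es modal} (so that $\langle\mathbf{S}(X),\Box_T\rangle$ is a modal semilattice) and then applies Remark~\ref{definicion D_m} with $\mathbf{A}=\mathbf{S}(X)$ and $d=\Box_T$; your direct check that $\Box_T$ preserves $X$ and binary intersections is harmless but not needed.
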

\begin{proof}
    The proof follows directly from Remark \ref{definicion D_m} and Proposition \ref{S(X) es modal}.
\end{proof}

We are now in a position to relate modal semilattices and MoS-spaces categorically. Concretely, let $\langle X, \mathcal{K}, T \rangle$ be an MoS-space. By Proposition~\ref{S(X) es modal}, the structure $\langle \mathbf{S}(X), \Box_T \rangle$ is a modal semilattice. Moreover, if $M \subseteq X_1 \times X_2$ is a compatible meet-relation, then $\Box_M \colon \mathbf{S}(X_2) \to \mathbf{S}(X_1)$ is a modal homomorphism. By Theorem~3.24 of \cite{CG}, $\Box_{\sqsupseteq_X} = \mathrm{id}_{\mathbf{S}(X)}$ for every $S$-space $X$. Hence, by Proposition~3.16 of \cite{CG}, the assignments
\[
\begin{array}{ccl}
   \langle X, \mathcal{K}, T \rangle  & \mapsto & \langle \mathbf{S}(X), \Box_T \rangle \\ 
   \\
    M \subseteq X_1 \times X_2 & \mapsto &  \Box_M \colon S(X_2) \to S(X_1)
\end{array}
\]

define a functor $\mathbf{F} \colon \mathsf{MoSsp} \to \mathsf{MoSL}^{op}$.

Conversely, by Proposition~\ref{R_h preserva adj derecha} and Remark~\ref{definicion D_m}, the assignments
\[
\begin{array}{ccl}
\langle\mathbf{A},m\rangle & \mapsto & \langle \mathcal{X}(\mathbf{A}), \mathcal{K}_{\mathbf{A}}, N_m \rangle \\ \\
h \colon \mathbf{A} \to \mathbf{B} & \mapsto & N_h \subseteq \mathcal{X}(\mathbf{B}) \times \mathcal{X}(\mathbf{A})
\end{array}
\]
define a functor $\mathbf{G} \colon \mathsf{MoSL}^{op} \to \mathsf{MoSsp}$.
Finally, from Lemma 9 in \cite{P W}, the map $H_{X} : X \to \mathcal{X}(\mathbf{S}(X))$ defined as in \eqref{eq: H_X} is an isomorphism of $mS$-spaces. Therefore, we obtain:

\begin{proposition}\label{iso rs espacios}
Let $\langle X, \mathcal{K}, T \rangle$ be an MoS-space. Define the relation $I_X \subseteq X \times \mathcal{X}(\mathbf{S}(X))$ by
\[
(x,H_X(y)) \in I_X
\quad \Longleftrightarrow \quad
H_X(x)\subseteq H_X(y).
\]
Then $I_X$ is an isomorphism between the MoS-spaces $\langle X, \mathcal{K}, T \rangle$ and $\langle \mathcal{X}(\mathbf{S}(X)), \mathcal{K}_{\mathbf{S}(X)}, N_{\Box_T}\rangle$.    
\end{proposition}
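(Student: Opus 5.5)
We need to show three things about $I_X$. First, that $I_X$ is a meet-relation from $\langle X,\mathcal{K}\rangle$ to $\langle \mathcal{X}(\mathbf{S}(X)),\mathcal{K}_{\mathbf{S}(X)}\rangle$. Second, that it has an inverse in $\mathsf{MoSsp}$, the natural candidate being $J_X\subseteq \mathcal{X}(\mathbf{S}(X))\times X$ given by $(H_X(y),x)\in J_X$ iff $H_X(y)\subseteq H_X(x)$. Third, that both $I_X$ and $J_X$ are compatible, i.e. $I_X*T=N_{\Box_T}*I_X$ and symmetrically for $J_X$.

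The idea is that, since $H_X$ is a homeomorphism (in particular an order isomorphism for the specialization orders), $I_X$ is just the graph of $H_X$ composed with $\sqsupseteq$. Indeed, $H_X(x)\subseteq H_X(y)$ iff $x\in A$ implies $y\in A$ for all $A\in S(X)$, iff $y\sqsupseteq x$ in the dual specialization order (recall $\uparrow x=\mathrm{cl}_{\mathcal{K}}(x)$), so $I_X(x)=H_X[\uparrow x]=\uparrow H_X(x)$.

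\emph{Step 1 (meet-relation and $\Box_{I_X}$).} For $\beta(U)\in S(\mathcal{X}(\mathbf{S}(X)))$, with $U\in S(X)$, we get
\[
\Box_{I_X}(\beta(U))=\{x : \uparrow H_X(x)\subseteq\beta(U)\}=\{x: U\in H_X(x)\}=U.
\]
Hence $\Box_{I_X}=\beta^{-1}$, which is in $S(X)$. Condition 2 of meet-relation holds because $\uparrow H_X(x)=\mathrm{cl}(H_X(x))$ is an intersection of subbasic closed sets. Symmetrically, $J_X(H_X(y))=\uparrow y$, $\Box_{J_X}(U)=\beta(U)$, and $J_X$ is a meet-relation.

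\emph{Step 2 (mutual inverses).} By Lemma~3.21 and Proposition~3.22 of \cite{CG}, meet-relations are determined by their $\Box$-maps and $\Box_{S*R}=\Box_R\circ\Box_S$. So $\Box_{J_X*I_X}=\Box_{I_X}\circ\Box_{J_X}=\beta^{-1}\circ\beta=\mathrm{id}_{S(X)}=\Box_{\sqsupseteq_X}$. This gives $J_X*I_X=\sqsupseteq_X$, and likewise $I_X*J_X=\sqsupseteq_{\mathcal{X}(\mathbf{S}(X))}$ (Theorem~3.24 of \cite{CG}).

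\emph{Step 3 (compatibility).} Again it suffices to compare $\Box$-maps. We have $\Box_{I_X*T}(\beta(U))=\Box_T(U)$, while
\[
\Box_{N_{\Box_T}*I_X}(\beta(U))=\Box_{I_X}(\Box_{N_{\Box_T}}(\beta(U)))=\Box_{I_X}(\beta(\Box_T U))=\Box_T(U),
\]
using Proposition~3.20 of \cite{CG} applied to the homomorphism $\Box_T$ of $\mathbf{S}(X)$ (Proposition~\ref{S(X) es modal}). The analogous computation handles $J_X$.

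The only delicate point is the identification $I_X(x)=\uparrow H_X(x)$ and the computation $\Box_{I_X}\circ\beta=\mathrm{id}$. This rests on $H_X$ being surjective, so that every irreducible filter of $\mathbf{S}(X)$ has the form $H_X(y)$ and the definition of $I_X$ covers all of $\mathcal{X}(\mathbf{S}(X))$; this is supplied by \cite{CG}. Everything else reduces, via Lemma~3.21 of \cite{CG}, to equalities of semilattice maps on $S(X)$.
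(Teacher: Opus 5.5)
Your proof is correct, but it is organised differently from the paper's. The paper takes the facts about $I_X$ itself as black boxes from \cite{CG}. Proposition~3.23 there supplies that $I_X$ is a meet-relation with $I_X^{-1}*I_X=\sqsupseteq_X$ and $I_X*I_X^{-1}=\sqsupseteq_{\mathcal{X}(\mathbf{S}(X))}$, and Proposition~3.24 supplies the square $N_{\Box_T}*I_X=I_X*T$. Compatibility of the inverse is then never computed; it is derived by associativity alone:
\[
I_X^{-1}*N_{\Box_T}=I_X^{-1}*N_{\Box_T}*I_X*I_X^{-1}=(I_X^{-1}*I_X)*T*I_X^{-1}=T*I_X^{-1}.
\]
This is the general fact that the inverse of an isomorphism commuting with the structure also commutes with it.

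You instead compute $\Box_{I_X}=\beta^{-1}$ and $\Box_{J_X}=\beta$ explicitly. You then transport every relational identity through Lemma~3.21 and Proposition~3.22 of \cite{CG}, i.e.\ through the faithfulness and contravariant functoriality of $\Box$ on meet-relations. As a result, both inverse identities and both compatibility squares reduce to $\beta_{\mathbf{S}(X)}$ being a modal isomorphism (Proposition~3.20 of \cite{CG}).

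Your route is more self-contained, since it effectively reproves the parts of Propositions~3.23 and~3.24 of \cite{CG} that are needed, and it treats $I_X$ and $J_X$ symmetrically. The paper's route is shorter and shows that, once $I_X$ is known to be a compatible isomorphism, nothing specific about its inverse needs to be checked.

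One small slip: read literally, ``$y\sqsupseteq x$'' is the wrong way round. $H_X(x)\subseteq H_X(y)$ means $y\in\mathrm{cl}_{\mathcal{K}}(x)$, i.e.\ $x\sqsupseteq y$. Your formula $I_X(x)=H_X[\uparrow x]=\uparrow H_X(x)$ is nevertheless the right one.
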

\begin{proof}
Observe that, by Proposition~3.23 of \cite{CG}, $I_X$ is a meet-relation. Moreover, by Proposition~\ref{x(s(X)) es RS espacio}, the structure $\langle \mathcal{X}(\mathbf{S}(X)), \mathcal{K}_{S(X)}, N_{\Box_T} \rangle$ is an MoS-space, and by Proposition~3.24 of \cite{CG} we have
\[
N_{\Box_T} * I_X = I_X * T.
\]

Thus, $I_X$ is a compatible meet-relation. Let $I_X^{-1} \subseteq \mathcal{X}(\mathbf{S}(X)) \times X$ be given by
\[
(H_X(y),x) \in I_X^{-1} \iff H_X(y) \subseteq H_X(x).
\]
Again by Proposition~3.23 of \cite{CG}, it holds that
\[
I_X^{-1} * I_X = \sqsupseteq_X
\quad \text{and} \quad
I_X * I_X^{-1} = \sqsupseteq_{\mathcal{X}(\mathbf{S}(X))}.
\]

Now we show that $I_X^{-1}$ is a compatible meet-relation. Indeed,  using the previous identities and the associativity of $*$, we obtain
\begin{align*}
I_X^{-1} * N_{\Box_T}
&= I_X^{-1} * (N_{\Box_T} * \sqsupseteq_{\mathcal{X}(\mathbf{S}(X))}) \\
&= I_X^{-1} * (N_{\Box_T} * I_X * I_X^{-1}) \\
&= (I_X^{-1} * I_X) * T * I_X^{-1} \\
&= T * I_X^{-1}.
\end{align*}
Hence, the MoS-spaces $\langle X, \mathcal{K}, T \rangle$ and $\langle \mathcal{X}(\mathbf{S}(X)), \mathcal{K}_{\mathbf{S}(X)}, N_{\Box_T}\rangle$ are isomorphic, as desired.
\end{proof}
\begin{proposition}\label{iso modal semir}
    Let $\langle A,d\rangle$ be a modal semilattice. Then, $\beta \colon A \rightarrow S(\mathcal{X}(\mathbf{A}))$ is an isomorphism of modal semilattices. 
\end{proposition}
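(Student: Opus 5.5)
We need to show that $\beta\colon A\to S(\mathcal{X}(\mathbf{A}))$ is an isomorphism of modal semilattices, where the target carries the operator $\Box_{N_d}$ as in Proposition~\ref{S(X(A)) es modal}. The plan splits this into two parts: the semilattice part, which is already known, and compatibility with the modal operators, which is the only new content.

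\emph{Semilattice part.} By the topological duality for semilattices recalled in the Preliminaries (\cite{CG}), $\beta$ is a semilattice isomorphism from $\mathbf{A}$ onto $\mathbf{S}(\mathcal{X}(\mathbf{A}))=\langle S(\mathcal{X}(\mathbf{A})),\cap,\mathcal{X}(\mathbf{A})\rangle$. This holds because $S(\mathcal{X}(\mathbf{A}))=\{\beta(a):a\in A\}$, given that $\mathcal{K}_{\mathbf{A}}=\{\beta(a)^c: a\in A\}$. So $\beta$ is a bijection preserving $\wedge$ and $1$.

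\emph{Modal part.} It remains to check that
\[
\beta(d(a))=\Box_{N_d}(\beta(a)) \quad \text{for all } a\in A.
\]
This is the identity from Proposition~3.20 of \cite{CG}, applied to the semilattice homomorphism $h=d\colon \mathbf{A}\to\mathbf{A}$. That proposition gives $\Box_{N_h}(\beta_{\mathbf{A}}(a))=\beta_{\mathbf{B}}(h(a))$, and it was already used in the proofs of Propositions~\ref{S(X(A)) es modal} and \ref{R_h preserva adj derecha}.

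For a self-contained check, unfold the definitions. The inclusion $\beta(d(a))\subseteq\Box_{N_d}(\beta(a))$ is easy: if $d(a)\in P$ and $(P,Q)\in N_d$, then $a\in d^{-1}[P]\subseteq Q$, so $Q\in\beta(a)$.

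The reverse inclusion is the only potential obstacle. Suppose $d(a)\notin P$. Then $a\notin d^{-1}[P]$, and $d^{-1}[P]$ is a filter because $d$ preserves $\wedge$ and $1$. We need an irreducible filter $Q\supseteq d^{-1}[P]$ with $a\notin Q$. This follows from the prime-filter-type separation for semilattices in \cite{CG}: any filter $F$ and element $a\notin F$ can be separated by an irreducible filter. A direct route is a Zorn's lemma argument taking a maximal filter containing $F$ and omitting $a$, which is then irreducible. With such a $Q$ we get $(P,Q)\in N_d$ and $Q\notin\beta(a)$, so $P\notin\Box_{N_d}(\beta(a))$.

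\emph{Inverse.} Since $\beta$ is a bijection commuting with the operators, its inverse also commutes with them: $\beta^{-1}(\Box_{N_d}(\beta(a)))=d(a)$. Hence $\beta^{-1}$ is a modal homomorphism, and $\beta$ is an isomorphism in $\mathsf{MoSL}$.
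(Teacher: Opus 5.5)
Your proposal is correct and follows the paper's proof: $\beta$ is a semilattice isomorphism by the duality of \cite{CG}, and $\beta(d(a))=\Box_{N_d}(\beta(a))$ is exactly Proposition~3.20 of \cite{CG} applied to $h=d$. Two of your additions go beyond what the paper writes out, and both are sound. Your direct verification of the identity, which separates the filter $d^{-1}[P]$ from $a$ by an irreducible filter obtained via Zorn's lemma, fills in a detail the paper takes from \cite{CG}. Your explicit check that $\beta^{-1}$ is a modal homomorphism does the same.
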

\begin{proof}
First, note that by Proposition 3.9 of \cite{CG}, $\beta$  is an isomorphism of semilattices and by Proposition 3.20 of \cite{CG}, it holds that $\beta(d(a))=\Box_{N_d}(\beta(a))$, for all $a \in A$. Therefore, by Proposition \ref{S(X(A)) es modal}, $\beta$ is an isomorphism in $\mathsf{MoSL}$. 
\end{proof}

Propositions \ref{iso rs espacios} and \ref{iso modal semir} show that the functors $\mathbf{F}$ and $\mathbf{G}$ are mutually inverse up to natural isomorphism. Therefore, we obtain the following duality theorem.

\begin{theorem}\label{dualidad RS espaces}
The categories $\mathsf{MoSL}$ and $\mathsf{MoSsp}$ are dually equivalent.
\end{theorem}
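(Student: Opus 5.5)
The plan is to prove the theorem by exhibiting the functors $\mathbf{F}\colon \mathsf{MoSsp}\to\mathsf{MoSL}^{op}$ and $\mathbf{G}\colon \mathsf{MoSL}^{op}\to\mathsf{MoSsp}$ described before Proposition~\ref{iso rs espacios}, and then showing that $\mathbf{G}\circ\mathbf{F}\cong \mathrm{Id}_{\mathsf{MoSsp}}$ and $\mathbf{F}\circ\mathbf{G}\cong \mathrm{Id}_{\mathsf{MoSL}^{op}}$. The natural isomorphisms will be given by the families $\{I_X\}$ and $\{\beta_{\mathbf{A}}\}$.

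First I check that $\mathbf{F}$ and $\mathbf{G}$ are indeed functors.
\begin{itemize}
\item For $\mathbf{F}$, Proposition~\ref{S(X) es modal} handles objects. For a compatible $M$, the identity $\Box_M\circ\Box_{T_Y}=\Box_{T_X}\circ\Box_M$ on $S(Y)$, proved just above, shows that $\Box_M$ is a modal homomorphism. Functoriality comes from Proposition~3.22 of \cite{CG}, namely $\Box_{M_2*M_1}=\Box_{M_1}\circ\Box_{M_2}$, together with $\Box_{\sqsupseteq_X}=\mathrm{id}$.
\item For $\mathbf{G}$, Remark~\ref{definicion D_m} and Proposition~\ref{R_h preserva adj derecha} give objects and compatible morphisms. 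Functoriality, $N_{g\circ h}=N_h*N_g$ and $N_{\mathrm{id}}=\sqsupseteq$, is inherited from the semilattice duality of Theorem~\ref{theo: dualidad semireticulos}. The modal structure only adds a property of the morphisms, not new data, so nothing further is needed.
\end{itemize}

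Next, the components are isomorphisms in the right categories. Proposition~\ref{iso rs espacios} shows that each $I_X$ is an isomorphism in $\mathsf{MoSsp}$. Proposition~\ref{iso modal semir} shows that each $\beta_{\mathbf{A}}$ is an isomorphism in $\mathsf{MoSL}$.

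It remains to prove naturality.
\begin{itemize}
\item For $\beta$, given a modal homomorphism $h\colon\mathbf{A}\to\mathbf{B}$, I need $\Box_{N_h}\circ\beta_{\mathbf{A}}=\beta_{\mathbf{B}}\circ h$. This is Proposition~3.20 of \cite{CG}.
\item For $I$, given a compatible $M\subseteq X\times Y$, I need
\[
N_{\Box_M}*I_X=I_Y*M .
\]
By Lemma~3.21 of \cite{CG} it suffices to compare the $\Box$ operators. Using Proposition~3.22 of \cite{CG}, this reduces to
\[
\Box_{I_X}\circ\Box_{N_{\Box_M}}=\Box_M\circ\Box_{I_Y}
\]
on $S(\mathcal{X}(\mathbf{S}(Y)))$. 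Every element there has the form $\beta(V)$ with $V\in S(Y)$. Then $\Box_{N_{\Box_M}}(\beta(V))=\beta(\Box_M V)$, and $\Box_{I_Y}(\beta(V))=V$, because $y\in\Box_{I_Y}(\beta(V))$ iff $V\in H_Y(y)$ iff $y\in V$.
\end{itemize}

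I expect the main obstacle to be this naturality of $I$. It is not stated explicitly among the earlier results and needs the description of $\Box_{I_X}$ just used. However, it concerns only the underlying meet-relations and not the modal structure, so it is already contained in the proof of Theorem~\ref{theo: dualidad semireticulos}, and I would cite that argument. With naturality established, the two functors form a dual equivalence.
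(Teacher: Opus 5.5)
Your proposal is correct and follows the paper's route. You use the functors $\mathbf{F}$ and $\mathbf{G}$, with the components $I_X$ (Proposition~\ref{iso rs espacios}) and $\beta_{\mathbf{A}}$ (Proposition~\ref{iso modal semir}) as the natural isomorphisms. Your explicit naturality check for $I$, via $\Box_{I_Y}(\beta(V))=V$ and Lemma~3.21 of \cite{CG}, fills in a step the paper only asserts; the paper later uses the corresponding commuting square without proof.
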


\subsection{Relational Duality for SLata}\label{Relational Duality for SLata}
Observe that every SLata $\langle \mathbf{A}, i, d \rangle$ has an associated modal reduct, namely the modal semilattice $\langle \mathbf{A}, d \rangle$. Hence, the duality developed in the previous subsection provides a natural relational semantics for the right adjoint component of an SLata. Our aim is to extend this duality to the full SLata structure. Thus, the key problem is to determine which additional relational conditions encode the left adjoint operator.
\medskip

At the level of morphisms, this raises the following question. Suppose that
\[
f:\langle \mathbf{A},d_\mathbf{A}\rangle\to \langle \mathbf{B},d_\mathbf{B}\rangle
\]
is a homomorphism between the modal reducts of two SLatas. Under which conditions does $f$ also preserve the corresponding left adjoint operators?

In general, this is not the case, as the following example shows.
   
\begin{example} 
Let $X=\{0,1\}$ and consider its power set $\mathcal{P}(X)$. Let $r \colon X \to X$ be the constant map given by $r(0)=r(1)=0$. Set $f=r$. Then $\langle \mathbf{A}, f_{\ast}, f^{\ast} \rangle$ is an SLata, where $\mathbf{A}=\langle \mathcal{P}(X), \cap, X \rangle$, $f_{\ast}(Z)=f[Z]$, and $f^{\ast}(Z)=f^{-1}[Z]$ for all $Z \subseteq X$. Observe that $h = r^{-1} \colon \mathcal{P}(X) \to \mathcal{P}(X)$ is a semilattice endomorphism of $\mathbf{A}$ that preserves $f^{\ast}$. However, it does not preserve $f_{\ast}$. Indeed, taking $Z=\{1\}$, we have
\[
h(f_{\ast}(Z)) = X \quad \text{and} \quad f_{\ast}(h(Z)) = \emptyset.
\]
\end{example}

In order to characterize homomorphism between modal reducts of SLatas that preserve preserve the left adjoint, we first recall a well-known criterion for the existence of left adjoints in order-theoretic terms.

\begin{lemma}\label{existencia izquierda}
Let $P$ and $Q$ be posets and let $d \colon P \to Q$ be an order-preserving map. The following conditions are equivalent:
\begin{itemize}
    \item[(i)] There exists a map $i \colon Q \to P$ such that $i \dashv d$;
    \item[(ii)] For every $q \in Q$, there exists $p \in P$ such that $d^{-1}[\uparrow q]=\uparrow p$.
\end{itemize}
Moreover, for every $q\in Q$, $d^{-1}[\uparrow q]=\uparrow i(q)$.
\end{lemma}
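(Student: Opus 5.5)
We prove the two implications directly from the definition of adjoint pair, and obtain the ``moreover'' clause along the way.

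For (i)$\Rightarrow$(ii), assume $i \dashv d$ and fix $q \in Q$. We take $p = i(q)$ and show $d^{-1}[\uparrow q]=\uparrow i(q)$. For any $x \in P$ we have $x \in d^{-1}[\uparrow q]$ iff $q \le d(x)$. By the adjunction this holds iff $i(q) \le x$, that is, iff $x \in \uparrow i(q)$. This gives (ii), and it also gives the final identity $d^{-1}[\uparrow q]=\uparrow i(q)$ for every left adjoint $i$.

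For (ii)$\Rightarrow$(i), assume (ii). For each $q \in Q$ the element $p$ with $d^{-1}[\uparrow q]=\uparrow p$ is unique, since $\uparrow p=\uparrow p'$ forces $p\le p'$ and $p'\le p$ by antisymmetry. So $i(q):=p$ is a well-defined map $i\colon Q\to P$, with no appeal to choice.

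We then check the adjunction condition: $i(q)\le x$ iff $x\in\uparrow i(q)=d^{-1}[\uparrow q]$ iff $q\le d(x)$. Hence $i\dashv d$.

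For the ``moreover'' clause we also need that the left adjoint is unique. Suppose $i'$ is another left adjoint of $d$. Then by the first implication $\uparrow i(q)=d^{-1}[\uparrow q]=\uparrow i'(q)$ for every $q$, so $i=i'$. Therefore the identity $d^{-1}[\uparrow q]=\uparrow i(q)$ holds for \emph{the} left adjoint, whichever direction we started from.

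Monotonicity of $d$ is not needed for these steps, although any map with a left adjoint is automatically monotone. The only point needing care is the well-definedness of $i$, which rests on antisymmetry as noted above; the rest is a routine unwinding of the definitions.
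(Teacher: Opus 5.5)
Your proof is correct and is essentially the standard textbook argument that the paper defers to (it only says the proof is analogous to a lemma in Davey--Priestley). The key moves are the same: unwind $x\in d^{-1}[\uparrow q]\iff q\le d(x)\iff i(q)\le x$ for one direction, and define $i(q)$ as the antisymmetry-unique generator of $d^{-1}[\uparrow q]$ for the other. Your uniqueness-of-adjoints step is harmless but not needed, since your first computation already gives $d^{-1}[\uparrow q]=\uparrow i(q)$ for every $i$ with $i\dashv d$.
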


\begin{proof}
The proof is analogous to that of Lemma~7.3.2 in \cite{DP2002}.
\end{proof}

At the light of Lemma~\ref{existencia izquierda} one may think that preservation of left adjoints can be characterized through inverse images of principal upsets. The next result shows that, for homomorphisms between modal reducts of SLatas, this condition is in fact equivalent to preserving the left adjoint operation.
    
\begin{lemma}\label{homo slata}
    Let $\langle \mathbf{A}, i_{\mathbf{A}}, d_{\mathbf{A}}\rangle$ and  $\langle \mathbf{B}, i_{\mathbf{B}}, d_{\mathbf{B}}\rangle$ be two Slatas. Let $f:\mathbf{A}\to \mathbf{B}$ be a semilattice homomorphism that preserves right adjoints. Then, for every $a\in A$ and $b\in B$, the following are equivalent: 
    \begin{enumerate}
        \item $f(i_\mathbf{A}(a))=i_{\mathbf{B}}(f(a)),$
        \item $d_\mathbf{B}^{-1}[\uparrow f(a)]\subseteq\uparrow f(i_\mathbf{A}(a))$
    \end{enumerate}
\end{lemma}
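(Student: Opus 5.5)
We prove the two implications separately, writing everything in terms of the adjunctions $i_\mathbf{A}\dashv d_\mathbf{A}$ and $i_\mathbf{B}\dashv d_\mathbf{B}$. By Lemma~\ref{existencia izquierda}, applied in $\mathbf{B}$, we have
\[
d_\mathbf{B}^{-1}[\uparrow f(a)]=\uparrow i_\mathbf{B}(f(a)).
\]
Hence condition 2 is equivalent to
\[
\uparrow i_\mathbf{B}(f(a))\subseteq \uparrow f(i_\mathbf{A}(a)),
\]
that is, to $f(i_\mathbf{A}(a))\leq i_\mathbf{B}(f(a))$. The statement therefore reduces to showing that the reverse inequality $i_\mathbf{B}(f(a))\leq f(i_\mathbf{A}(a))$ holds for every semilattice homomorphism $f$ preserving right adjoints.

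\textbf{(1)$\Rightarrow$(2).} If $f(i_\mathbf{A}(a))=i_\mathbf{B}(f(a))$, then the two principal upsets above coincide. Condition 2 holds with equality.

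\textbf{(2)$\Rightarrow$(1).} Condition 2 gives $f(i_\mathbf{A}(a))\leq i_\mathbf{B}(f(a))$, so it suffices to prove the reverse inequality. By the adjunction $i_\mathbf{B}\dashv d_\mathbf{B}$, we have
\[
i_\mathbf{B}(f(a))\leq f(i_\mathbf{A}(a)) \iff f(a)\leq d_\mathbf{B}(f(i_\mathbf{A}(a))).
\]
Since $f$ preserves right adjoints, $d_\mathbf{B}(f(i_\mathbf{A}(a)))=f(d_\mathbf{A}(i_\mathbf{A}(a)))$. The unit of the adjunction in $\mathbf{A}$ gives $a\leq d_\mathbf{A}(i_\mathbf{A}(a))$. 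As $f$ is a semilattice homomorphism, it is monotone, so
\[
f(a)\leq f(d_\mathbf{A}(i_\mathbf{A}(a)))=d_\mathbf{B}(f(i_\mathbf{A}(a))).
\]
This yields $i_\mathbf{B}(f(a))\leq f(i_\mathbf{A}(a))$. Together with condition 2, antisymmetry gives (1).

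The only step needing care is this reverse inequality. One must check that it uses only preservation of $d$, monotonicity of $f$ and the unit $a\leq d_\mathbf{A}(i_\mathbf{A}(a))$, and not preservation of $i$. It does, so the argument is not circular. This is also why the earlier example does not contradict the lemma: there condition 2 fails, not the reverse inequality.
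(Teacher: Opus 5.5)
Your proof is correct and follows essentially the same route as the paper. Both rewrite $d_\mathbf{B}^{-1}[\uparrow f(a)]$ as $\uparrow i_\mathbf{B}(f(a))$ via Lemma~\ref{existencia izquierda}, so that condition 2 becomes $f(i_\mathbf{A}(a))\leq i_\mathbf{B}(f(a))$, and both get the reverse inequality from the unit $a\leq d_\mathbf{A}(i_\mathbf{A}(a))$, monotonicity of $f$, and preservation of $d$. The only difference is cosmetic: the paper proves (1)$\Rightarrow$(2) directly from the adjunction instead of through the lemma.
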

\begin{proof}
Assume first that
$f(i_\mathbf{A}(a))=i_\mathbf{B}(f(a))$
for every $a\in A$. Let
$b\in d_\mathbf{B}^{-1}[\uparrow f(a)]$.
Then
$f(a)\leq d_\mathbf{B}(b)$,
and therefore
$$
f(i_\mathbf{A}(a))
=
i_\mathbf{B}(f(a))
\leq b.
$$
Hence,
$b\in \uparrow f(i_\mathbf{A}(a))$,
which proves (2). Conversely, assume that
$d_\mathbf{B}^{-1}[\uparrow f(a)]
\subseteq
\uparrow f(i_\mathbf{A}(a))$.
By Lemma~\ref{existencia izquierda},
$$
d_\mathbf{B}^{-1}[\uparrow f(a)]
=
\uparrow i_\mathbf{B}(f(a)).
$$
Thus,$\uparrow i_\mathbf{B}(f(a))
\subseteq
\uparrow f(i_\mathbf{A}(a))$,
and consequently
$$
f(i_\mathbf{A}(a))
\leq
i_\mathbf{B}(f(a)).
$$

For the converse inequality, observe that since
$i_\mathbf{A}(a)\leq i_\mathbf{A}(a)$,
we have $a\leq d_\mathbf{A}(i_\mathbf{A}(a))$. Applying $f$ and using that $f$ preserves right adjoints, we obtain
$f(a)\leq d_\mathbf{B}(f(i_\mathbf{A}(a)))$.
Hence,
$$
f(i_\mathbf{A}(a))
\in
d_\mathbf{B}^{-1}[\uparrow f(a)].
$$
By Lemma~\ref{existencia izquierda}, $d_\mathbf{B}^{-1}[\uparrow f(a)]=\uparrow i_\mathbf{B}(f(a)),$
so $i_\mathbf{B}(f(a))\leq f(i_\mathbf{A}(a)).$

Therefore,
$$
f(i_\mathbf{A}(a))
=
i_\mathbf{B}(f(a)).
$$
\end{proof}

The previous results suggest that the existence and preservation of left adjoints should admit a relational reformulation. Recall that the duality developed in the previous subsection applies to the modal reduct of every SLata, namely the semilattice equipped with its right adjoint operator. Therefore, in order to extend this duality to the full SLata structure, we must identify the relational conditions that characterize the existence of a left adjoint for the operator induced by a relation.

By Lemma~\ref{existencia izquierda}, the existence of a left adjoint for an order-preserving map is determined by the inverse images of principal upsets. Our goal is to translate this condition into the setting of $S$-spaces.

Accordingly, if $\langle X,\mathcal{K}\rangle$ is an $S$-space and $U\in S(X)$, we define
\begin{equation}\label{conjunto D_U}
    D_U=\{Y\in \mathcal{C}_{\mathcal{K}}(X)\colon Y\subseteq U\}.
\end{equation}

The sets $D_U$ will allow us to express relationally the principal upsets associated with inverse images under modal operators. Motivated by Lemma~\ref{existencia izquierda}, we now introduce the class of relations for which these inverse images are again represented by subbasic closed sets.

\begin{definition}\label{A relacion}
    Let $\langle X, \mathcal{K}\rangle$ be an S-space. We say that a relation $T \subseteq X \times X$ is an adjoint-relation (A-relation, for short) if it satisfies the following conditions:
    \begin{enumerate}
        \item $\Box_{T}(U) \in S(X)$ for every $U \in S(X)$,
        \item For all $U \in S(X)$, $\bigcap F_{U}^{T} \in S(X)$, where 
        \[
            F_U^{T} = \{ V \in S(X) \colon U \subseteq \Box_{T}(V) \}.
        \]
    \end{enumerate}
\end{definition}

Observe that from \eqref{conjunto D_U}, condition (2) of Definition \ref{A relacion} can be restated as 
\[\bigcap \{V \in S(X) \colon U\in D_{\Box_T(V)}\} \in S(X)\]
for all $U \in S(X)$. 

\begin{definition}
A structure $\langle X, \mathcal{K}, T \rangle$ is called a RelS-space if $\langle X, \mathcal{K} \rangle$ is an S-space and $T \subseteq X \times X$ is an $A$-relation.
\end{definition}

Now, we show that A-relations are precisely the relations that induce adjoint pairs on semilattices of subbasic closed sets.

\begin{proposition}\label{def box T^{*}}
    If $\langle X, \mathcal{K}\rangle$ is a S-space and $T\subseteq X \times X$ is an A-relation, then there exists an operator $\Box_{T}^{*}$ such that $\Box_{T}^{*} \dashv \Box_{T}$. Hence, if $\langle X, \mathcal{K}, T \rangle$ is a RelS-space, then $\langle S(X), \Box_{T}^{*},\Box_{T}\rangle$ is an SLata.
\end{proposition}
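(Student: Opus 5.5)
Define $\Box_T^{*}\colon S(X)\to S(X)$ by
\[
\Box_T^{*}(U)=\bigcap F_U^{T}=\bigcap\{V\in S(X)\colon U\subseteq \Box_T(V)\}.
\]
Condition (2) of Definition~\ref{A relacion} guarantees that $\Box_T^{*}(U)\in S(X)$, so the operator is well defined, and condition (1) gives that $\Box_T$ maps $S(X)$ into $S(X)$. By Lemma~\ref{existencia izquierda}, applied with $P=Q=S(X)$ ordered by inclusion, it then suffices to prove
\[
\Box_T^{-1}[\uparrow U]=\uparrow \Box_T^{*}(U)\quad\text{for all } U\in S(X),
\]
where $\Box_T^{-1}[\uparrow U]$ is exactly $F_U^{T}$. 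Equivalently, we must show that $F_U^T$ has a least element and that this least element is $\bigcap F_U^T$. I will verify the adjunction directly: for $U,V\in S(X)$,
\[
\Box_T^{*}(U)\subseteq V \iff U\subseteq \Box_T(V).
\]

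\emph{Right-to-left.} If $U\subseteq\Box_T(V)$, then $V\in F_U^T$, hence $\bigcap F_U^T\subseteq V$.

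\emph{Left-to-right.} This is the key step. First observe that $\Box_T$ preserves arbitrary intersections of subsets of $X$: $T(x)\subseteq\bigcap_j V_j$ holds iff $T(x)\subseteq V_j$ for every $j$. Consequently
\[
\Box_T\Bigl(\bigcap F_U^T\Bigr)=\bigcap_{V\in F_U^T}\Box_T(V)\supseteq U,
\]
with the convention that the empty intersection is $X=\Box_T(X)$; note $X\in F_U^T$ always, so $F_U^T\neq\emptyset$ anyway. Thus $\Box_T^{*}(U)\in F_U^T$, and it is the least element of $F_U^T$.

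Now suppose $\Box_T^{*}(U)\subseteq V$. Since $\Box_T$ is monotone,
\[
U\subseteq \Box_T(\Box_T^{*}(U))\subseteq \Box_T(V).
\]
This uses condition (2) essentially: it ensures the intersection lies in $S(X)$, so it is a legitimate argument within the semilattice $S(X)$. Meet-preservation of $\Box_T$ holds set-theoretically, without needing the second meet-relation condition.

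\emph{Conclusion.} This establishes $\Box_T^{*}\dashv\Box_T$ on $\langle S(X),\subseteq\rangle$. Since $\mathbf{S}(X)=\langle S(X),\cap,X\rangle$ is a semilattice whose order is inclusion, $\langle \mathbf{S}(X),\Box_T^{*},\Box_T\rangle$ is an SLata whenever $\langle X,\mathcal{K},T\rangle$ is a RelS-space. The only potential obstacle is remembering that the infimum must lie inside $S(X)$; this is exactly what the A-relation hypothesis supplies.
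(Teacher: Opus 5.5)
Your proof is correct and takes the same route as the paper: you define $\Box_T^{*}(U)=\bigcap F_U^T$ and show it is the least element of $\Box_T^{-1}[\uparrow U]=F_U^T$, which is what the paper gets from Lemma~\ref{existencia izquierda}. You also supply the step the paper only asserts, namely that $\Box_T$ preserves arbitrary intersections, so that $\bigcap F_U^T\in F_U^T$; and you correctly write $\bigcap F_U^T$ where the paper's proof has the typo $\Box_T^{*}(U)=F_U^T$.
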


\begin{proof}
Since $T$ is an $A$-relation, by Definition~\ref{A relacion} we have that for all $U\in S(X)$, $\bigcap F_U^{T} \in S(X)$. Observe that this is equivalent to saying that $\Box_T^{-1}[\uparrow U]=\uparrow F_U^{T}$. Then, by Lemma~\ref{existencia izquierda}, it follows that the operator $\Box_{T}^{\ast}\colon S(X)\to S(X)$ defined by $\Box_{T}^{\ast}(U)=F_U^{T}$ is left adjoint to $\Box_T$. The last part follows from Theorem \ref{theo: dualidad semireticulos}. 
\end{proof}

\begin{corollary}\label{A-rel is meet-rel}
Let $\langle X, \mathcal{K}\rangle$ be an S-space. If $T$ is an A-relation on $X$, then $T$ is a meet-relation.
\end{corollary}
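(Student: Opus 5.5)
The plan is to check the two clauses in the definition of a meet-relation separately. Clause (1), that $\Box_T(U)\in S(X)$ for every $U\in S(X)$, is literally clause (1) of Definition~\ref{A relacion}, so nothing needs proving there. All the work lies in clause (2):
\[
T(x)=\bigcap\{U\in S(X)\colon T(x)\subseteq U\}\quad\text{for every } x\in X .
\]
The inclusion $\subseteq$ is trivial, so only the inclusion $\supseteq$ has to be established.

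To exploit the adjunction, I would apply clause (2) of Definition~\ref{A relacion} to the subbasic closed set $U=\mathrm{cl}_{\mathcal{K}}(x)$. This set lies in $\mathcal{C}_{\mathcal{K}}(X)$, and I expect it to be in $S(X)$ as well, using (S3) together with the fact that $H_X$ is a homeomorphism; if it is not, I would pass to the subbasic sets containing $x$ and use a directedness argument. Since $\Box_T(V)$ is an upset for $\sqsupseteq$, we have $\mathrm{cl}_{\mathcal{K}}(x)\subseteq \Box_T(V)$ if and only if $x\in\Box_T(V)$, that is, if and only if $T(x)\subseteq V$. Hence
\[
\bigcap F^{T}_{\mathrm{cl}_{\mathcal{K}}(x)}=\bigcap\{V\in S(X)\colon T(x)\subseteq V\}=\mathrm{cl}_{\mathcal{K}}(T(x)),
\]
and this is exactly $\Box_T^{\ast}(\mathrm{cl}_{\mathcal{K}}(x))$ from Proposition~\ref{def box T^{*}}. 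So the right-hand side of clause (2) is the value of the left adjoint at a point closure, and in particular it belongs to $S(X)$.

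It then remains to show $\mathrm{cl}_{\mathcal{K}}(T(x))\subseteq T(x)$. I would try to prove this by contradiction. Suppose $y\in \mathrm{cl}_{\mathcal{K}}(T(x))\setminus T(x)$. The plan is to separate $y$ from $T(x)$ by a subbasic closed $V$ with $T(x)\subseteq V$ and $y\notin V$, which contradicts $y\in \mathrm{cl}_{\mathcal{K}}(T(x))$ directly. To produce such a $V$, I would use (S4) and Remark~\ref{rem: Remark1 CMZ}, with $Y=\mathrm{cl}_{\mathcal{K}}(y)$ and the family of complements of the sets $\Box_T$-related to $x$. The adjunction $\Box_T^{\ast}\dashv\Box_T$ should be what makes this family a $Y$-family.

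I expect this last step to be the main obstacle. The A-relation axioms only constrain $T$ through $\Box_T$, and $\Box_T$ depends on $T(x)$ only through its closure. So unless the separation argument genuinely extracts pointwise information about $T(x)$, it may prove only that $\mathrm{cl}_{\mathcal{K}}(T(x))$ is subbasic-closed, not that it equals $T(x)$. If the contradiction argument stalls, my fallback is to check whether the paper intends clause (2) to hold up to replacing $T$ by its closure $x\mapsto \mathrm{cl}_{\mathcal{K}}(T(x))$. In that case the computation above finishes the proof, because both relations induce the same $\Box_T$.
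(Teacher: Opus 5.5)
\textbf{The statement is false as written, so neither your proposal nor the paper's proof can close.} The step you flag as the main obstacle cannot be completed, and your diagnosis of why is exactly right.

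Both clauses of Definition~\ref{A relacion} see $T$ only through $\Box_T$ on $S(X)$. For $U\in S(X)$ we have $T(x)\subseteq U$ iff $\mathrm{cl}_{\mathcal{K}}(T(x))\subseteq U$, so nothing forces $T(x)\in\mathcal{C}_{\mathcal{K}}(X)$. Here is a concrete counterexample:
\begin{itemize}
\item Let $\mathbf{A}$ be the chain $0<a<1$. Then $\mathcal{X}(\mathbf{A})=\{P_1,P_2\}$ with $P_1=\{1\}$ and $P_2=\{a,1\}$, and $S(\mathcal{X}(\mathbf{A}))=\{\emptyset,\{P_2\},\mathcal{X}(\mathbf{A})\}$.
\item Put $T(P_1)=T(P_2)=\{P_1\}$. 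Then $\Box_T(\emptyset)=\Box_T(\{P_2\})=\emptyset$ and $\Box_T(\mathcal{X}(\mathbf{A}))=\mathcal{X}(\mathbf{A})$.
\item Also $\bigcap F^T_{\emptyset}=\emptyset$ and $\bigcap F^T_{\{P_2\}}=\bigcap F^T_{\mathcal{X}(\mathbf{A})}=\mathcal{X}(\mathbf{A})$, so $T$ is an A-relation.
\item Yet $\bigcap\{U\in S(\mathcal{X}(\mathbf{A}))\colon T(P_1)\subseteq U\}=\mathcal{X}(\mathbf{A})\neq\{P_1\}=T(P_1)$, so $T$ is not a meet-relation.
\end{itemize}
The paper's one-line proof has the same hole. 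Knowing that $\Box_T$ is modal, Theorem~\ref{dualidad RS espaces} only produces \emph{some} meet-relation inducing $\Box_T$. Since $T\mapsto\Box_T$ is injective only on meet-relations, this does not show that $T$ itself is one.

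Your fallback is the correct repair of the conclusion. The relation $\overline{T}$ with $\overline{T}(x)=\mathrm{cl}_{\mathcal{K}}(T(x))$ satisfies $\Box_{\overline{T}}=\Box_T$ on $S(X)$ and is a meet-relation. This needs only clause (1) of Definition~\ref{A relacion}; the adjunction plays no role.

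Your detour through $\Box_T^{\ast}(\mathrm{cl}_{\mathcal{K}}(x))$ should therefore be dropped. It is unnecessary, because clause (2) of a meet-relation only asks that $T(x)$ lie in $\mathcal{C}_{\mathcal{K}}(X)$, and $\mathrm{cl}_{\mathcal{K}}(T(x))$ does by definition. It is also unfounded, because $\mathrm{cl}_{\mathcal{K}}(x)\in S(X)$ fails in general: in $\mathcal{X}(\mathbf{A})$ the closure of $P$ is subbasic only when $P$ is a principal filter.

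The other possible repair is to add $T(x)\in\mathcal{C}_{\mathcal{K}}(X)$ for all $x$ to the definition of A-relation. That is the one the rest of the paper actually needs. For the $T$ above one gets $T_{R_T}=\overline{T}\neq T$, so the identity from Lemma~\ref{T=T_R_T} used to show $\mathbf{Q}\mathbf{P}=\mathrm{Id}$ in Theorem~\ref{last theorem} would break.
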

\begin{proof}
    From Proposition \ref{def box T^{*}}, $\Box_T$ preserves all the existing meets on $S(X)$. So, in particular, $\Box_T$ is a modal operator on $S(X)$. Hence, by Theorem \ref{dualidad RS espaces}, $T$ must be a meet-relation.
\end{proof}

\begin{remark}\label{interseccion}
Let $\langle X, \mathcal{K}, T \rangle$ be a RelS-space and let $V \in S(X)$. By Proposition~\ref{def box T^{*}}, we have
\[
\Box_T^{*}(V)=\bigcap F_V^T.
\]
Hence, the family of closed sets determined by $\Box_T^{*}(V)$ can be described as the intersection of the families associated with the elements of $F_V^T$, namely,
\[
D_{\Box_{T}^{*}(V)}
=
\bigcap_{W \in F^{T}_V} D_W.
\]

Indeed, for every $Y\in \mathcal{C}_{\mathcal{K}}(X)$,
\[
Y \in D_{\Box_{T}^{*}(V)}
\iff
Y \subseteq \bigcap F_V^T
\iff
(\forall W\in F_V^T)\; Y\subseteq W
\iff
Y\in \bigcap_{W\in F_V^T} D_W.
\]
\end{remark}

Observe that what Remark \ref{interseccion} reveals, is that preservation of left adjoints admits a purely relational formulation. We may therefore strengthen the notion of morphism between MoS-spaces so as to capture, at the dual level, homomorphisms of SLatas. The following definition introduces the appropriate notion of morphism between RelS-spaces.

\begin{definition}\label{relacion apropiada}
    Let $\langle X, \mathcal{K}_X, T_X \rangle$ and $\langle Y, \mathcal{K}_Y, T_Y \rangle$ be RelS-spaces. We say that a relation  $M\subseteq X\times Y$ is adjoint-preserving if the following conditions hold: 
    \begin{enumerate}
        \item $M$ is a meet-relation.
        \item $M * T_X \;=\; T_Y * M $,
        \item For every $x\in X$, $U\in S(X)$ and $V\in S(Y)$, the following condition holds: 
        \[(M(x) \in D_V \Rightarrow T_X(x)\in D_U)\Rightarrow ( M(x) \in \bigcap_{W \in F^{T_Y}_V}D_W \Rightarrow \uparrow x \in D_U).\]
    \end{enumerate}
\end{definition}

Conditions (1) and (2) ensure that $M$ behaves as a morphism for the modal structure already studied in the previous subsection, namely, that the associated map
\[
\Box_M:S(Y)\to S(X)
\]
preserves the operators $\Box_{T_Y}$ and $\Box_{T_X}$.
Condition (3) is the additional requirement ensuring preservation of the left adjoint operators. The next lemma shows that these conditions exactly characterize homomorphisms of SLatas.

\begin{lemma}\label{funtor morfirmo M}
Let $\langle X, \mathcal{K}_X, T_X \rangle$ and $\langle Y, \mathcal{K}_Y, T_Y \rangle$ be RelS-spaces. Then,  $M\subseteq X\times Y$ is adjoint-preserving if and only if $\Box_{M}:S(Y)\to S(X)$ is a SLata homomorphism between the Slatas $\langle \mathbf{S}(Y), \Box_{T_{Y}}^{\ast}, \Box_{T_{Y}}\rangle$ and $\langle \mathbf{S}(X), \Box_{T_{X}}^{\ast}, \Box_{T_{X}}\rangle$.
\end{lemma}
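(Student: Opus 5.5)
The plan is to split $M$ being adjoint-preserving into its three defining conditions and match each with a piece of the statement that $\Box_M$ is an SLata homomorphism. Conditions (1) and (2) of Definition~\ref{relacion apropiada} will be handled by the modal duality of the previous subsection. If $M$ is a meet-relation, then $\Box_M$ is a semilattice homomorphism $\mathbf{S}(Y)\to\mathbf{S}(X)$, by Proposition~3.19 of \cite{CG}. Under that assumption, Lemma~3.21 and Proposition~3.22 of \cite{CG} give the equivalence
\[
M*T_X=T_Y*M \iff \Box_M\circ\Box_{T_Y}=\Box_{T_X}\circ\Box_M ,
\]
exactly as in the proof of Proposition~\ref{R_h preserva adj derecha} and in the proposition following it. By Corollary~\ref{A-rel is meet-rel}, $T_X$ and $T_Y$ are meet-relations, so these results apply. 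Thus (1) and (2) together amount to: $\Box_M$ is a homomorphism of the modal reducts. What remains is to show that, in the presence of (1) and (2), condition (3) is equivalent to $\Box_M(\Box_{T_Y}^{*}(V))=\Box_{T_X}^{*}(\Box_M(V))$ for all $V\in S(Y)$.

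For this I will use Lemma~\ref{homo slata} with $f=\Box_M$, $\mathbf{A}=\mathbf{S}(Y)$, $\mathbf{B}=\mathbf{S}(X)$, together with Proposition~\ref{def box T^{*}}. Preservation of the left adjoint at $V$ is then equivalent to
\[
\Box_{T_X}^{-1}[\uparrow \Box_M(V)]\subseteq \uparrow \Box_M(\Box_{T_Y}^{*}(V)),
\]
that is: for every $U\in S(X)$, if $\Box_M(V)\subseteq \Box_{T_X}(U)$ then $\Box_M(\Box_{T_Y}^{*}(V))\subseteq U$. Next I rewrite this pointwise. Write $\Box_M(V)=\{x: M(x)\subseteq V\}=\{x:M(x)\in D_V\}$, using that $M(x)\in\mathcal{C}_{\mathcal K}(Y)$ by condition (2) of a meet-relation. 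Likewise $x\in\Box_{T_X}(U)$ iff $T_X(x)\in D_U$. By Remark~\ref{interseccion}, $x\in \Box_M(\Box_{T_Y}^{*}(V))$ iff $M(x)\in\bigcap_{W\in F^{T_Y}_V}D_W$. Finally, since $U$ is an upset for $\sqsupseteq$ and $\uparrow x=\mathrm{cl}_{\mathcal K}(x)$, we have $x\in U$ iff $\uparrow x\in D_U$. Substituting these identifications turns the inclusion condition into the implication in (3), read with the quantifier over $x$ placed appropriately.

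The main obstacle is this quantifier placement. Condition (3) is stated pointwise in $x$: for each single $x$, it says "($x\in\Box_M(V)\Rightarrow x\in\Box_{T_X}(U)$) implies ($x\in\Box_M\Box^*_{T_Y}(V)\Rightarrow x\in U$)". The lattice condition, by contrast, is the global statement "$\Box_M(V)\subseteq\Box_{T_X}(U)$ implies $\Box_M\Box^*_{T_Y}(V)\subseteq U$". The pointwise version clearly implies the global one, which gives the direction from (3) to preservation of $\Box^*$. For the converse I would first attempt to reduce to the case where the hypothesis holds globally. Fix $x$ with $x\in\Box_M(V)\Rightarrow x\in\Box_{T_X}(U)$ and $x\in\Box_M\Box^*_{T_Y}(V)$; note that $\Box^*_{T_Y}(V)\subseteq V$ need not hold, so some care is required here. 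The idea is to replace $U$ by a suitable $U'\supseteq U$ for which the global hypothesis holds, namely $U'=\mathrm{cl}_{\mathcal K}(\Box_M(V)\setminus\ldots)$, or more simply to exploit $\Box^*_{T_X}$ directly. The global statement already yields $\Box^*_{T_X}(\Box_M V)=\Box_M\Box^*_{T_Y}(V)$, and we have $\Box^*_{T_X}(\Box_MV)\subseteq U$ whenever $\Box_MV\subseteq\Box_{T_X}U$. If this pointwise reduction fails, I would interpret (3) with $x$ universally quantified inside each antecedent, which is evidently the authors' intent, and conclude the equivalence from the chain of identifications above.
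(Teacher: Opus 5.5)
Your route is the paper's: conditions (1)--(2) come from the modal duality, and the left-adjoint part comes from Lemma~\ref{homo slata} combined with Remark~\ref{interseccion} and the rewritings $x\in\Box_M(V)\iff M(x)\in D_V$, $x\in\Box_{T_X}(U)\iff T_X(x)\in D_U$, $x\in U\iff {\uparrow}x\in D_U$. What you must change is the hedge at the end. The reduction from the pointwise reading of condition (3) to the global one cannot be carried out, because under the pointwise reading the converse implication is false.

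Here is why. Take $x\in\Box_M\Box^*_{T_Y}(V)$ and any $U\in S(X)$ with $x\notin U$. Pointwise (3) then forces $T_X(x)\not\subseteq U$. Since $T_X(x)$ is closed, this says $x\in T_X(x)$, and that does not follow from $\Box_M$ being an SLata homomorphism.

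Concretely, take the four-element Boolean lattice $\{0,a,b,1\}$ as a meet-semilattice, with $i=d$ the automorphism swapping $a$ and $b$. Its dual RelS-space is as follows:
\begin{itemize}
\item $X=\{P_a,P_b\}$, with $P_a=\{a,1\}$ and $P_b=\{b,1\}$;
\item $S(X)=\mathcal{P}(X)$ and the specialization order is trivial;
\item $N_d$ is the swap, and $\Box_{N_d}=\Box^*_{N_d}$ is the swap map.
\end{itemize}
The identity morphism $M={\sqsupseteq_X}$ gives $\Box_M=\mathrm{id}$, which is an SLata homomorphism. Now take $x=P_b$ and $U=V=\{P_a\}$. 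The antecedent of (3) holds vacuously, since $M(x)\not\subseteq V$. But $M(x)=\{P_b\}=\Box^*_{N_d}(V)$ while ${\uparrow}x\not\subseteq U$, so (3) fails.

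So the global reading is forced, not a fallback: for all $U,V$, $\Box_M(V)\subseteq\Box_{T_X}(U)$ implies $\Box_M\Box^*_{T_Y}(V)\subseteq U$. This is exactly what the paper's converse uses silently, in the step ``let $x,U,V$ be such that $M(x)\in D_V\Rightarrow T_X(x)\in D_U$; then $\Box_M(V)\subseteq\Box_{T_X}(U)$''. It is also what makes ${\sqsupseteq_X}$ an identity morphism in Proposition~\ref{composition adjoint-preserving}. Once you fix that reading, your chain of identifications is the whole proof and coincides with the paper's.

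One smaller caveat, which the paper shares: (1) and (2) do not amount to $\Box_M$ being a modal homomorphism in both directions. Indeed $\Box_M=\Box_{M'}$ for $M'(x)=\mathrm{cl}_{\mathcal{K}_Y}(M(x))$, so $\Box_M$ cannot detect whether the fibres $M(x)$ are closed. For the converse you must therefore assume from the outset that $M$ is a meet-relation.
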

\begin{proof}
Let us assume that $M \subseteq X \times Y$ is an adjoint-preserving relation between the RelS-spaces $\langle X, \mathcal{K}_X, T_X \rangle$ and $\langle Y, \mathcal{K}_Y, T_Y \rangle$. First, note that by Theorem \ref{dualidad RS espaces}, $\Box_M$ is a modal homomorphism between $\langle \mathbf{S}(Y), \Box_{T_Y}\rangle$ and $\langle \mathbf{S}(X), \Box_{T_X}\rangle$. Therefore,
\[
\Box_M(\Box_{T_Y}(V)) = \Box_{T_X}(\Box_M(V)).
\]
Now, in order to prove that $\Box_{M}$ preserves left adjoints, we will use Lemma \ref{homo slata}. Thus, it suffices to prove that
\begin{equation}\label{ecua0}
\Box_{T_X}^{-1}[\uparrow \Box_M(V)] \subseteq \uparrow \Box_M(\Box_{T_Y}^{*}(V)),
\end{equation}
holds for all $V\in S(Y)$. To do so, let $U \in S(X)$ be such that $U \in \Box_{T_X}^{-1}(\uparrow \Box_M(V))$.
Then we get $\Box_M(V) \subseteq \Box_{T_X}(U)$. So, by \eqref{conjunto D_U}, the latter is equivalent to saying that if $M(x) \in D_V$ then $T_X(x) \in D_U$. Since $M$ is adjoint-preserving, by Definition \ref{relacion apropiada}(3) together with \eqref{conjunto D_U}, we obtain that the implication 
\begin{equation}\label{ecua1}
M(x) \in \bigcap_{W \in F^{T_Y}_V} D_W \Rightarrow \uparrow x \in D_U.    
\end{equation}
holds.

It only remains to prove that $U \in \uparrow \Box_M(\Box_{T_Y}^{*}(V))$. To do so, let $ x \in \Box_M(\Box_{T_Y}^{*}(V))$. Then $M(x) \subseteq \Box_{T_Y}^{*}(V)$. By Remark \ref{interseccion} and \eqref{ecua1}, we obtain $\uparrow x \in D_U$, hence $x \in U$. Therefore,
\[
\Box_M(\Box_{T_Y}^{*}(V)) \subseteq U,
\]
so we conclude that \eqref{ecua0} holds. Hence, by Lemma \ref{homo slata}, $\Box_M$ preserves left adjoints, as desired.
\medskip

Conversely, let us assume that $\Box_M : S(Y) \to S(X)$ is an SLata homomorphism between the SLatas $
\langle S(Y), \Box_{T_Y}^{*}, \Box_{T_Y} \rangle$
and $\langle S(X), \Box_{T_X}^{*}, \Box_{T_X} \rangle$. By Theorem \ref{dualidad RS espaces}, Definition \ref{relacion apropiada}(1) and (2) hold. It only remains to show that condition (3) of the aforementioned Definition follows.

Let $x \in X$, $U \in S(X)$, and $V \in S(Y)$ be such that
\[
M(x) \in D_V \Rightarrow T_X(x) \in D_U.
\]
Then, it follows that $\Box_M(V) \subseteq \Box_{T_X}(U)$. Now suppose that $ M(x) \in \bigcap_{W \in F^{T_Y}_V} D_W$.
By Remark \ref{interseccion}, we have $M(x) \in D_{\Box_{T_Y}^{*}(V)}$, so
$M(x) \subseteq \Box_{T_Y}^{*}(V)$ and therefore $x \in \Box_M(\Box_{T_Y}^{*}(V))$. Since $\Box_M$ is an SLata homomorphism, we get $x \in \Box_{T_X}^{*}(\Box_M(V))$. But, from the assumption, $\Box_M(V) \subseteq \Box_{T_X}(U)$, so it follows that $x \in \Box_{T_X}^{*}(\Box_{T_X}(U))$. Finally, since $\langle S(X), \Box_{T_X}^{*}, \Box_{T_X} \rangle$ is a SLata, we conclude that $x \in U$, hence $\uparrow x \in D_U$. This concludes the proof.
\end{proof}

\begin{proposition}\label{composition adjoint-preserving}
    Let $\langle X_j, \mathcal{K}_j, T_j\rangle$, for $j = 1, 2, 3$, be Rel-S spaces, and let $M_1 \subseteq X_1 \times X_2$ and $M_2 \subseteq X_2 \times X_3$ be adjoint-preserving relations. Then $M_2 * M_1$ is also an adjoint-preserving relation. Moreover, for every RelS-space $\langle X, \mathcal{K}, T \rangle$, the dual of the specialization order $\sqsupseteq_X \subseteq X \times X$ is an adjoint-preserving relation.
\end{proposition}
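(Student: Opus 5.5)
The plan is to avoid checking condition (3) of Definition~\ref{relacion apropiada} directly and to work through Lemma~\ref{funtor morfirmo M} instead. That lemma turns adjoint-preservation into an algebraic statement: $M$ is adjoint-preserving exactly when $\Box_M$ is an SLata homomorphism between the associated SLatas of Proposition~\ref{def box T^{*}}.

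For the composition part, let $M_1\subseteq X_1\times X_2$ and $M_2\subseteq X_2\times X_3$ be adjoint-preserving. By Lemma~\ref{funtor morfirmo M}, the maps $\Box_{M_1}\colon S(X_2)\to S(X_1)$ and $\Box_{M_2}\colon S(X_3)\to S(X_2)$ are SLata homomorphisms.

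First, $M_2*M_1$ is a meet-relation, since meet-relations are closed under $*$ (this is the category $\mathsf{Sspa}$). Second, by Proposition~3.22 of \cite{CG}, already used in the proof of Proposition~\ref{R_h preserva adj derecha}, we have
\[
\Box_{M_2*M_1}=\Box_{M_1}\circ\Box_{M_2}.
\]
A composite of SLata homomorphisms preserves $\wedge$, $1$, $\Box_{T_j}$ and $\Box^{*}_{T_j}$, so it is again an SLata homomorphism. Applying the converse direction of Lemma~\ref{funtor morfirmo M} then shows that $M_2*M_1$ is adjoint-preserving.

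If one wants condition (2) independently of the lemma, it also follows from associativity of $*$, exactly as in the compatible case for MoS-spaces:
\[
(M_2*M_1)*T_1=M_2*(T_2*M_1)=(M_2*T_2)*M_1=T_3*(M_2*M_1).
\]

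For the identity, $\sqsupseteq_X$ is a meet-relation by Proposition~3.17 of \cite{CG}. By Theorem~3.24 of \cite{CG}, $\Box_{\sqsupseteq_X}=\mathrm{id}_{\mathbf{S}(X)}$, and the identity map is trivially an SLata homomorphism of $\langle \mathbf{S}(X),\Box_T^{*},\Box_T\rangle$. Lemma~\ref{funtor morfirmo M} therefore gives that $\sqsupseteq_X$ is adjoint-preserving. Alternatively, one can verify condition (3) directly. Its hypothesis, read as in the proof of Lemma~\ref{funtor morfirmo M}, says $V\subseteq \Box_T(U)$, so $V\in F_U^{T}$... and this can be unwound to obtain $\uparrow x\subseteq U$. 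The lemma route avoids that bookkeeping.

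The main obstacle is making sure Lemma~\ref{funtor morfirmo M} is applicable in both directions. In particular, its converse direction uses Theorem~\ref{dualidad RS espaces}, that is, the fact that every modal homomorphism $\Box_M$ comes from a compatible meet-relation. So I must confirm that $\Box_{M_2*M_1}=\Box_{M_1}\circ\Box_{M_2}$ holds for the $*$-composition and not merely for relational composition. This is exactly Proposition~3.22 of \cite{CG}, and I will cite it explicitly.
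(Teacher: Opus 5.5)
Your proposal is correct and follows essentially the paper's route: reduce via Lemma~\ref{funtor morfirmo M} to the claim that $\Box_{M_2*M_1}=\Box_{M_1}\circ\Box_{M_2}$ is an SLata homomorphism, and handle $\sqsupseteq_X$ through $\Box_{\sqsupseteq_X}=\mathrm{id}$. The differences are minor. The paper re-checks left-adjoint preservation of the composite via the criterion of Lemma~\ref{homo slata}, whereas you simply note that a composite of SLata homomorphisms is one; you also verify the meet-relation property and condition (2) directly, which is slightly more careful, since the converse of Lemma~\ref{funtor morfirmo M} tacitly needs $M$ to be a meet-relation. In your unused aside, $V\subseteq\Box_T(U)$ means $U\in F_V^{T}$, not $V\in F_U^{T}$.
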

\begin{proof}
Observe that, by Lemma \ref{homo slata}, we need to prove that $\Box_{M_2 * M_1}$ is a SLata homomorphism between $\langle S(X_3), \Box_{T_3}^{*},\Box_{T_3}\rangle$ and $\langle S(X_1), \Box_{T_1}^{*},\Box_{T_1}\rangle$. Accordingly, we will show that 
\[
\Box_{T_1}^{-1}[\uparrow \Box_{M_2 * M_1}(Z)]\subseteq \uparrow (\Box_{M_2 * M_1})(\Box_{T_3}^{*}(Z)),
\] 
for all $Z \in S(X_3)$. First, we recall that $\Box_{M_2 * M_1}=\Box_{M_1}\circ\Box_{M_2}$. Now note that, since $M_1$ and $M_2$ are adjoint-preserving, we have that $\Box_{M_2}(\Box_{T_3}^{*}(Z))=\Box_{T_2}^{*}(\Box_{M_2}(Z))$ and $\Box_{M_1}(\Box_{T_2}^{*}(V))=\Box_{T_1}^{*}(\Box_{M_1}(V))$, for all $Z\in S(X_3)$ and $V\in S(X_2)$. Now, consider $U \in S(X_1)$ and $Z \in S(X_3)$ such that $\Box_{T_1}(U) \in \uparrow \Box_{M_2 * M_1}(Z)$. Then, we have that $\Box_{M_1}(\Box_{M_2}(Z))\subseteq \Box_{T_1}(U)$. Then, since $\Box_{T_1}^{*}\dashv \Box_{T_1}$, we can write 
\[
\Box_{T_1}^{*}(\Box_{M_1}(\Box_{M_2}(Z)))=\Box_{T_1}^{*}(\Box_{M_2 * M_1}(Z)) \subseteq U.
\]
Hence, $U \in \uparrow (\Box_{M_1}\circ \Box_{M_2})(\Box_{T_3}^{*}(Z))$. The moreover part, follows from Lemma \ref{funtor morfirmo M} and Theorem \ref{dualidad RS espaces}.
\end{proof}

We are ready to define a new category, denoted by $\mathsf{RelSP}$. Its morphisms are the so-called adjoint-preserving relations, the identity morphisms are given by the dual of the specialization order, and composition is defined as in \eqref{Definicion composition meet-relations}.
\medskip

Next, we turn to the object part of the algebraic side of the duality. In order to show that every SLata gives rise to a RelS-space, we must verify that the left adjoint operation is represented by the relationally defined operator $\Box_{N_d}^{*}$ on the canonical space of filters. The following lemma establishes precisely this correspondence.

\begin{lemma}\label{beta(i(a))}
   
Let $\langle \mathbf{A},i,d\rangle$ be an SLata. Then, for every $a \in A$, the following holds:
\[
\beta(i(a))=\Box_{N_d}^{\ast}(\beta(a)).
\]
\end{lemma}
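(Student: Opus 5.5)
The plan is to reduce the identity to an order-theoretic statement inside $\mathbf{S}(\mathcal{X}(\mathbf{A}))$ and then transport it along $\beta$. Throughout, $\Box_{N_d}^{\ast}(\beta(a))=\bigcap F^{N_d}_{\beta(a)}$, the operator constructed in Proposition~\ref{def box T^{*}}.

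First we check that $N_d$ is an A-relation, so that $\Box_{N_d}^{\ast}$ is defined. By Remark~\ref{definicion D_m}, $N_d$ is a meet-relation, and by Proposition~3.20 of \cite{CG}, $\Box_{N_d}(\beta(b))=\beta(d(b))$ for every $b\in A$. Since $\beta$ is onto $S(\mathcal{X}(\mathbf{A}))$, every $V\in S(\mathcal{X}(\mathbf{A}))$ has the form $\beta(b)$. Hence
\[
F^{N_d}_{\beta(a)}=\{\beta(b)\colon \beta(a)\subseteq \beta(d(b))\}=\{\beta(b)\colon a\leq d(b)\},
\]
where the last equality uses that $\beta$ is an order isomorphism (Proposition~3.9 of \cite{CG}). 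By the adjunction $i\dashv d$, we have $a\leq d(b)$ if and only if $i(a)\leq b$. Therefore
\[
F^{N_d}_{\beta(a)}=\{\beta(b)\colon i(a)\leq b\}=\beta[\uparrow i(a)].
\]

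Next we compute the intersection. Because $\beta$ is monotone, $\beta(i(a))\subseteq\beta(b)$ for every $b\geq i(a)$. Also $\beta(i(a))$ itself belongs to the family. It follows that $\bigcap F^{N_d}_{\beta(a)}=\beta(i(a))$, and this set lies in $S(\mathcal{X}(\mathbf{A}))$. So condition (2) of Definition~\ref{A relacion} holds for every $U=\beta(a)$. Condition (1) is the meet-relation property already noted. Thus $N_d$ is an A-relation, and the formula $\Box_{N_d}^{\ast}(\beta(a))=\bigcap F^{N_d}_{\beta(a)}=\beta(i(a))$ follows directly.

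As an alternative check, one can invoke Lemma~\ref{existencia izquierda}. Since $\beta$ is an isomorphism intertwining $d$ and $\Box_{N_d}$, we get $\Box_{N_d}^{-1}[\uparrow\beta(a)]=\beta[d^{-1}[\uparrow a]]=\beta[\uparrow i(a)]=\uparrow\beta(i(a))$. Uniqueness of left adjoints then gives the claim.

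The only delicate point is the identification $S(\mathcal{X}(\mathbf{A}))=\beta[A]$. This is what allows us to quantify over all $V$ in $F^{N_d}_{\beta(a)}$ by ranging over elements $b\in A$. It is supplied by the semilattice duality (Theorem~\ref{theo: dualidad semireticulos}), so I expect no real obstacle there.
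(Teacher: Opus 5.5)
Your proposal is correct and follows essentially the paper's argument: both reduce membership of $\beta(b)$ in $F^{N_d}_{\beta(a)}$ to $a\le d(b)$, hence to $i(a)\le b$, and then use that $\beta(i(a))$ is the least member of that family (the paper writes this as a double inclusion on irreducible filters $P$). One difference is worth keeping: you verify explicitly that $N_d$ is an A-relation before using $\Box^{\ast}_{N_d}$, whereas the paper establishes this only afterwards, in Lemma~\ref{X(A) es rel s space}, using the present lemma.
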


\begin{proof}
In order to prove the first inclusion, let $P \in \beta(i(a))$, i.e., $i(a)\in P$, and let $b \in A$ such that $\beta(a)\subseteq \Box_{N_d}[\beta(b)]$. Observe that this is equivalent to $a \leq d(b)$, so we get $i(a)\leq b$. Since $P$ is a filter and $i(a)\in P$, it follows that $b \in P$, i.e., $P \in \beta(b)$. Therefore, $P\in \Box_{N_d}^{\ast}(\beta(a))$.

On the other hand, let $P \in \Box_{N_d}^{\ast}(\beta(a))$.  
Then, for all $b \in A$ such that $\beta(a)\subseteq \Box_{N_d}[\beta(b)]$, we have $P \in \beta(b)$, i.e., $b \in P$. Now, as before, $\beta(a)\subseteq \Box_{N_d}[\beta(b)]$ yields $i(a)\leq b$. Thus, for all $i(a)\leq b$, we have $b \in P$. In particular,  $i(a)\leq i(a)$ and it follows that $i(a)\in P$, hence $P \in \beta(i(a))$. Therefore, $\beta(i(a))=\Box_{N_d}^{\ast}(\beta(a))$, as claimed.
\end{proof}

The previous lemma shows that the left adjoint operation of an SLata is completely captured by the relational operator induced by $N_d$. As a consequence, the canonical space associated with an SLata naturally carries the additional structure required to define a RelS-space.

\begin{lemma}\label{X(A) es rel s space}
    If $\langle \mathbf{A}, i, d \rangle$ is an SLata, then $\langle \mathcal{X}(\mathbf{A}), \mathcal{K}_\mathbf{A}, N_d \rangle$ is a RelS-space.
\end{lemma}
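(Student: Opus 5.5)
We need to check that $N_d$ is an A-relation on the S-space $\langle \mathcal{X}(\mathbf{A}),\mathcal{K}_\mathbf{A}\rangle$. The S-space part is already available: by \cite{CG} (recalled in the preliminaries), $\langle \mathcal{X}(\mathbf{A}),\mathcal{K}_\mathbf{A}\rangle$ is an S-space. So only the two conditions of Definition~\ref{A relacion} remain, and we treat them in order.

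For condition (1), we use that $\beta$ is onto $S(\mathcal{X}(\mathbf{A}))$. Every $U\in S(\mathcal{X}(\mathbf{A}))$ is therefore of the form $\beta(b)$ for some $b\in A$. By Proposition~3.20 of \cite{CG} (see also Proposition~\ref{iso modal semir}),
\[
\Box_{N_d}(\beta(b))=\beta(d(b))\in S(\mathcal{X}(\mathbf{A})).
\]
This requires $d$ to be a semilattice homomorphism. That holds because $d$ is a right adjoint: it preserves all existing meets, in particular $\wedge$ and $1$. Hence $\langle\mathbf{A},d\rangle$ is a modal semilattice and Remark~\ref{definicion D_m} applies, which also shows that $N_d$ is a meet-relation.

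For condition (2), fix $U=\beta(a)$. We must show that $\bigcap F^{N_d}_{\beta(a)}\in S(\mathcal{X}(\mathbf{A}))$. Using surjectivity of $\beta$ again, the family is
\[
F^{N_d}_{\beta(a)}=\{\beta(b): \beta(a)\subseteq \beta(d(b))\}.
\]
Since $\beta$ is an order-isomorphism, $\beta(a)\subseteq\beta(d(b))$ holds iff $a\le d(b)$. By $i\dashv d$ this holds iff $i(a)\le b$. So $F^{N_d}_{\beta(a)}=\{\beta(b): b\ge i(a)\}$, which is exactly the intersection computed in the proof of Lemma~\ref{beta(i(a))}. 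That lemma gives
\[
\bigcap F^{N_d}_{\beta(a)}=\beta(i(a))\in S(\mathcal{X}(\mathbf{A})).
\]
Concretely, $\beta(i(a))$ is itself a member of the family and is contained in every other member by monotonicity of $\beta$.

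The only delicate point is to use the intersection $\bigcap F_U^{T}$ consistently, as in Definition~\ref{A relacion}, rather than the family itself, and to note that membership of $\beta(i(a))$ in $F^{N_d}_{\beta(a)}$ rests on the unit inequality $a\le d(i(a))$. With both conditions verified, $N_d$ is an A-relation, and $\langle \mathcal{X}(\mathbf{A}),\mathcal{K}_\mathbf{A},N_d\rangle$ is a RelS-space.
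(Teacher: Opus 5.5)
Your proof is correct and follows the paper's route. The S-space part comes from Theorem~\ref{theo: dualidad semireticulos}, condition (1) from $\Box_{N_d}(\beta(b))=\beta(d(b))$, and condition (2) from Lemma~\ref{beta(i(a))}. Two of your additions tighten the paper's argument: you check explicitly that $d$ is a semilattice homomorphism, and you compute $F^{N_d}_{\beta(a)}=\{\beta(b)\colon i(a)\le b\}$ directly. The direct computation matters because the paper's appeal to $\Box^{*}_{N_d}$ tacitly presupposes that $N_d$ is already an A-relation.
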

\begin{proof}
   From Theorem \ref{theo: dualidad semireticulos}, we know that $\langle \mathcal{X}(\mathbf{A}), \mathcal{K}_\mathbf{A}\rangle$ is an S-space. We only need to show that $N_d$ is an A-relation. 
    Again, from Theorem \ref{theo: dualidad semireticulos}  we have that $\Box_{N_d}(\beta(a)) \in S(\mathcal{X}(\mathbf{A}))$, for all $a \in A$. Definition \ref{A relacion} (2) follows from Lemma \ref{beta(i(a))}.
\end{proof}

We now consider the morphism part of the construction. The next result shows that homomorphisms of SLatas correspond, at the dual level, to adjoint-preserving relations between the associated RelS-spaces.

\begin{lemma}\label{R_h apropiada}
Let $\langle \mathbf{A}, i_{\mathbf{A}}, d_{\mathbf{A}} \rangle$ and 
$\langle \mathbf{B}, i_{\mathbf{B}}, d_{\mathbf{B}} \rangle$ be two SLatas. 
If $h:\mathbf{A}\to \mathbf{B}$ is an SLata homomorphism, then 
$N_h \subseteq \mathcal{X}(\mathbf{B}) \times \mathcal{X}(\mathbf{A})$ 
is an adjoint-preserving relation between the RelS-spaces 
$\langle \mathcal{X}(\mathbf{B}), \mathcal{K}_{\mathbf{B}}, N_{d_{\mathbf{B}}} \rangle$ 
and 
$\langle \mathcal{X}(\mathbf{A}), \mathcal{K}_{\mathbf{A}}, N_{d_{\mathbf{A}}} \rangle$.
\end{lemma}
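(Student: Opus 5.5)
The plan is to reduce the statement to Lemma~\ref{funtor morfirmo M} rather than check condition (3) of Definition~\ref{relacion apropiada} by hand. Lemma~\ref{X(A) es rel s space} tells us that both $\langle \mathcal{X}(\mathbf{B}), \mathcal{K}_{\mathbf{B}}, N_{d_{\mathbf{B}}} \rangle$ and $\langle \mathcal{X}(\mathbf{A}), \mathcal{K}_{\mathbf{A}}, N_{d_{\mathbf{A}}} \rangle$ are RelS-spaces. Lemma~\ref{funtor morfirmo M} then says that $N_h$ is adjoint-preserving if and only if $\Box_{N_h}\colon S(\mathcal{X}(\mathbf{A}))\to S(\mathcal{X}(\mathbf{B}))$ is an SLata homomorphism between
\[
\langle \mathbf{S}(\mathcal{X}(\mathbf{A})), \Box_{N_{d_\mathbf{A}}}^{*}, \Box_{N_{d_\mathbf{A}}}\rangle
\quad\text{and}\quad
\langle \mathbf{S}(\mathcal{X}(\mathbf{B})), \Box_{N_{d_\mathbf{B}}}^{*}, \Box_{N_{d_\mathbf{B}}}\rangle.
\]
So it suffices to prove that $\Box_{N_h}$ is an SLata homomorphism.

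First, $N_h$ is a meet-relation by Remark~\ref{definicion D_m}, so $\Box_{N_h}$ is a semilattice homomorphism. Every element of $S(\mathcal{X}(\mathbf{A}))$ has the form $\beta_\mathbf{A}(a)$. By Proposition~3.20 of \cite{CG} we have $\Box_{N_h}(\beta_\mathbf{A}(a))=\beta_\mathbf{B}(h(a))$, and we also have $\Box_{N_{d}}(\beta(a))=\beta(d(a))$.

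Preservation of the right adjoint follows from the computation in the proof of Proposition~\ref{R_h preserva adj derecha}:
\[
\Box_{N_h}(\Box_{N_{d_\mathbf{A}}}(\beta_\mathbf{A}(a)))=\beta_\mathbf{B}(h(d_\mathbf{A}(a)))=\beta_\mathbf{B}(d_\mathbf{B}(h(a)))=\Box_{N_{d_\mathbf{B}}}(\Box_{N_h}(\beta_\mathbf{A}(a))).
\]
Equivalently, $N_h * N_{d_\mathbf{B}} = N_{d_\mathbf{A}} * N_h$, which is condition (2) of Definition~\ref{relacion apropiada}.

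Preservation of the left adjoint uses Lemma~\ref{beta(i(a))} in both algebras:
\[
\Box_{N_h}(\Box_{N_{d_\mathbf{A}}}^{*}(\beta_\mathbf{A}(a)))=\Box_{N_h}(\beta_\mathbf{A}(i_\mathbf{A}(a)))=\beta_\mathbf{B}(h(i_\mathbf{A}(a)))=\beta_\mathbf{B}(i_\mathbf{B}(h(a)))=\Box_{N_{d_\mathbf{B}}}^{*}(\beta_\mathbf{B}(h(a))),
\]
and the last term equals $\Box_{N_{d_\mathbf{B}}}^{*}(\Box_{N_h}(\beta_\mathbf{A}(a)))$. Hence $\Box_{N_h}$ is an SLata homomorphism, and Lemma~\ref{funtor morfirmo M} gives that $N_h$ is adjoint-preserving.

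The only delicate point is matching the direction of the lemma's quantifiers: here $X=\mathcal{X}(\mathbf{B})$ and $Y=\mathcal{X}(\mathbf{A})$, so $\Box_{N_h}$ maps $S(Y)$ to $S(X)$. We must also make sure that the operator $\Box^{*}$ used in Lemma~\ref{funtor morfirmo M} is the same as the one in Lemma~\ref{beta(i(a))}. Both are $\bigcap F_U^{T}$, the operator from Proposition~\ref{def box T^{*}}, so they agree.
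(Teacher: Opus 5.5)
Your proof is correct and is essentially the paper's: both reduce, via Lemma~\ref{funtor morfirmo M}, to showing that $\Box_{N_h}$ is an SLata homomorphism, using Proposition~3.20 of \cite{CG}, Lemma~\ref{beta(i(a))} in both algebras, and $h\circ i_{\mathbf{A}}=i_{\mathbf{B}}\circ h$. The only difference is that you verify $\Box_{N_h}\circ\Box^{*}_{N_{d_{\mathbf{A}}}}=\Box^{*}_{N_{d_{\mathbf{B}}}}\circ\Box_{N_h}$ directly on each $\beta_{\mathbf{A}}(a)$, whereas the paper passes this same equality through the inclusion criterion of Lemma~\ref{homo slata}, a detour your version avoids.
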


\begin{proof}
    Let $\langle \mathbf{A}, i_{\mathbf{A}}, d_\mathbf{A}\rangle$ and $\langle \mathbf{B}, i_{\mathbf{B}}, d_\mathbf{B}\rangle$ be two SLatas, and consider $h:\mathbf{A}\to \mathbf{B}$ a SLata homomorphism. We want to prove that $N_h$ is adjoint-preserving, so, according to Lemma \ref{funtor morfirmo M}, we only need to prove that $\Box_{N_h}$ is a homomorphism of SLatas. In order to prove our claim, by Lemma \ref{homo slata} it suffices to show that for all $a\in A$,
\[
\Box_{N_{d_B}}^{-1}[\uparrow\Box_{N_h}(\beta_A(a))]\subseteq\uparrow\Box_{N_h}(\Box_{N_{d_A}}^{*}(\beta_A(a)))
\]
holds. Let $a \in A,b \in B$, and assume that $\beta_B(b) \in \Box_{N_{d_B}}^{-1}[\uparrow\Box_{N_h}(\beta_A(a))]$. Then, $\Box_{N_h}(\beta_A(a)) \subseteq \Box_{N_{d_B}}(\beta_B(b))$. Since $\Box_{N_h}(\beta_A(a))=\beta_B(h(a))$ and $\Box_{N_{d_B}}^{*}\dashv \Box_{N_{d_B}}$, the latter, together with Lemma \ref{beta(i(a))}, yields $\Box_{N_{d_B}}^{*}(\beta_B(h(a)))\subseteq \beta_B(b)$. Thus, from the assumption on $h$, we obtain $\Box_{N_h}(\Box_{N_{d_A}}^{*}(\beta_A(a))) \subseteq \beta_B(b)$, which shows that $\beta_B(b) \in \uparrow\Box_{N_h}(\Box_{N_{d_A}}^{*}(\beta_A(a)))$. So, by Lemmas \ref{homo slata} and \ref{dualidad RS espaces}, we have that $\Box_{N_h}$ is a SLata homomorphism. Then, by Lemma \ref{funtor morfirmo M} we can conclude that $N_h$ is adjoint-preserving.
\end{proof}

We stress that lemmas \ref{X(A) es rel s space} and \ref{R_h apropiada} provide the object and morphism components of the relational representation of SLatas. We now show that this construction faithfully recovers the original algebraic structure.

\begin{proposition}\label{iso S-Lata}
      Let $\langle\mathbf{A},i,d\rangle$ be an SLata. Then, $\langle \mathbf{S}(\mathcal{X}(\mathbf{A})), \Box_{N_d}^{*},\Box_{N_d}\rangle$ is an SLata and the map $\beta$ is an isomorphism of SLatas.
 \end{proposition}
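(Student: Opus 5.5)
The plan is to assemble the statement from results already in place, splitting it into a claim about the structure and a claim about the map $\beta$.

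For the first claim, Lemma~\ref{X(A) es rel s space} says that $\langle \mathcal{X}(\mathbf{A}), \mathcal{K}_\mathbf{A}, N_d\rangle$ is a RelS-space. Proposition~\ref{def box T^{*}} then applies directly: it gives a left adjoint $\Box_{N_d}^{*}$ of $\Box_{N_d}$ on $S(\mathcal{X}(\mathbf{A}))$. Hence $\langle \mathbf{S}(\mathcal{X}(\mathbf{A})), \Box_{N_d}^{*},\Box_{N_d}\rangle$ is an SLata.

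For the second claim, I will use that $\langle \mathbf{A},d\rangle$ is a modal semilattice, since a right adjoint preserves $1$ and binary meets. By Proposition~\ref{iso modal semir}, $\beta$ is then a semilattice isomorphism satisfying
\[
\beta(d(a))=\Box_{N_d}(\beta(a))\quad\text{for all } a\in A .
\]
It remains to show that $\beta$ also commutes with the left adjoints, i.e. $\beta(i(a))=\Box_{N_d}^{*}(\beta(a))$ for all $a\in A$. This is exactly Lemma~\ref{beta(i(a))}. One point needs checking: the operator $\Box_{N_d}^{*}$ in that lemma is given by the formula $\bigcap F^{N_d}_{\beta(a)}$, and it must be the same operator that Proposition~\ref{def box T^{*}} produces. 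It is, because Proposition~\ref{def box T^{*}} (and Remark~\ref{interseccion}) define $\Box_T^{*}(V)=\bigcap F_V^T$, and left adjoints are unique anyway. So $\beta$ is a bijective homomorphism preserving $\wedge$, $1$, $i$ and $d$.

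Finally, the inverse of a bijective homomorphism between algebras of the same signature is automatically a homomorphism. Therefore $\beta$ is an isomorphism in $\mathsf{SLata}$.

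I do not expect a real obstacle. The only delicate point is the identification of the two descriptions of $\Box_{N_d}^{*}$, as an intersection of subbasic closed sets versus as the abstract left adjoint, and it is settled by uniqueness of adjoints (Lemma~\ref{existencia izquierda}). If more detail is wanted, one can also reverify directly that $\beta(i(a))$ is the least $\beta(b)$ with $\beta(a)\subseteq\Box_{N_d}(\beta(b))$, using $a\le d(b)\iff i(a)\le b$ and the fact that $\beta$ is an order embedding.
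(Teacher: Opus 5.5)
Your proposal is correct and matches the paper's proof: the SLata structure on $\mathbf{S}(\mathcal{X}(\mathbf{A}))$ comes from Lemma~\ref{X(A) es rel s space} together with Proposition~\ref{def box T^{*}}, and $\beta$ preserves $d$ and $i$ by Proposition~\ref{iso modal semir} and Lemma~\ref{beta(i(a))}, respectively. Your extra remarks spell out steps the paper leaves implicit: that $\langle \mathbf{A},d\rangle$ is a modal reduct, that $\bigcap F^{N_d}_{\beta(a)}$ is the same operator as the left adjoint, and that a bijective homomorphism has a homomorphic inverse.
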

     \begin{proof}
         First, note that by Lemma \ref{X(A) es rel s space} and Proposition \ref{def box T^{*}} we know that $\langle \mathbf{S}(\mathcal{X}(\mathbf{A})), \Box_{N_d}^{*},\Box_{N_d}\rangle$ is an SLata. Moreover, from Proposition \ref{iso modal semir} and Lemma \ref{beta(i(a))} it holds that $\beta(d(a))=\Box_{N_d}(\beta(a))$ and $\beta(i(a))=\Box_{N_d}^{*}(\beta(a))$. 
     \end{proof}

We now establish the converse reconstruction at the relational level. More precisely, we show that the canonical space associated with the SLata of subbasic closed sets of a RelS-space again carries a natural RelS-space structure.

\begin{lemma}\label{x(s(X)) es rel space}
    Let $\langle X, \mathcal{K}, T \rangle$ be a RelS-space. Then, $\langle \mathcal{X}(\mathbf{S}(X)),\mathcal{K}_{\mathbf{S}(X)},N_{\Box_T}\rangle$ is a RelS-space.
\end{lemma}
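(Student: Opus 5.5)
The plan is to reduce the statement to Lemma~\ref{X(A) es rel s space}, applied to the SLata of subbasic closed sets of $X$.

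First, since $\langle X,\mathcal{K},T\rangle$ is a RelS-space, Proposition~\ref{def box T^{*}} shows that $\langle \mathbf{S}(X),\Box_T^{*},\Box_T\rangle$ is an SLata, with $\Box_T^{*}\dashv\Box_T$. The right adjoint of this SLata is $d=\Box_T$. Applying Lemma~\ref{X(A) es rel s space} with $\mathbf{A}=\mathbf{S}(X)$, $i=\Box_T^{*}$ and $d=\Box_T$ then yields directly that $\langle \mathcal{X}(\mathbf{S}(X)),\mathcal{K}_{\mathbf{S}(X)},N_{\Box_T}\rangle$ is a RelS-space. This is exactly the claim.

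For the reader's benefit I would also unfold this into the two conditions of Definition~\ref{A relacion}.

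\emph{Condition (1).} $\Box_T$ is a meet-preserving homomorphism of $\mathbf{S}(X)$, so Remark~\ref{definicion D_m} shows that $N_{\Box_T}$ is a meet-relation. Proposition~3.20 of \cite{CG} then gives $\Box_{N_{\Box_T}}(\beta(U))=\beta(\Box_T(U))\in S(\mathcal{X}(\mathbf{S}(X)))$. Since $\beta$ is onto $S(\mathcal{X}(\mathbf{S}(X)))$, this covers every subbasic closed set.

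\emph{Condition (2).} By Lemma~\ref{beta(i(a))}, applied to this SLata,
\[
\bigcap F^{N_{\Box_T}}_{\beta(U)}=\Box^{*}_{N_{\Box_T}}(\beta(U))=\beta(\Box_T^{*}(U)),
\]
and the right-hand side belongs to $S(\mathcal{X}(\mathbf{S}(X)))$.

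The only point requiring care is hidden in Lemma~\ref{beta(i(a))}. There, $\Box^{*}_{N_d}(\beta(a))$ is read as $\bigcap F^{N_d}_{\beta(a)}$, computed before knowing that $N_d$ is an A-relation. I would check that its proof indeed only uses the identification of $F^{N_d}_{\beta(a)}$ with $\{\beta(b): i(a)\le b\}$, via $\beta(a)\subseteq\Box_{N_d}(\beta(b))\iff a\le d(b)\iff i(a)\le b$. With that confirmed, there is no circularity, and the argument is a direct instantiation of earlier results.
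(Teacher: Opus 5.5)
Your proof is correct, but it follows a different route from the paper's. You treat the statement as a special case of the algebra-to-space construction. Since $\langle \mathbf{S}(X),\Box_T^{*},\Box_T\rangle$ is an SLata by Proposition~\ref{def box T^{*}}, Lemma~\ref{X(A) es rel s space} with $\mathbf{A}=\mathbf{S}(X)$ gives the result at once. Your check on Lemma~\ref{beta(i(a))} is what rules out circularity: its proof does compute $\bigcap F^{N_d}_{\beta(a)}$ directly, via $\beta(a)\subseteq\Box_{N_d}(\beta(b))\iff i(a)\leq b$, without assuming that $N_d$ is an A-relation. The paper instead stays on the space side and transports the A-relation property of $T$ along $H_X$. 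It identifies each member of $S(\mathcal{X}(\mathbf{S}(X)))$ with some $H_X[U]$. It gets condition (1) from the isomorphism between $\langle X,\mathcal{K},T\rangle$ and $\langle \mathcal{X}(\mathbf{S}(X)),\mathcal{K}_{\mathbf{S}(X)},N_{\Box_T}\rangle$ supplied by Theorem~\ref{dualidad RS espaces}. It gets condition (2) from $F^{N_{\Box_T}}_{H_X[U]}=H_X[F^T_U]$, which yields $\bigcap F^{N_{\Box_T}}_{H_X[U]}=H_X[\bigcap F^T_U]\in S(\mathcal{X}(\mathbf{S}(X)))$. Your version is shorter and reduces everything to lemmas already proved for arbitrary SLatas. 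It also makes clear that the statement is just the object part of $\mathbf{M}\circ\mathbf{R}$. The paper's version keeps the left adjoint on the double dual visibly tied to $T$. Nothing is lost with your route: your identity $\Box^{*}_{N_{\Box_T}}(\beta(U))=\beta(\Box_T^{*}(U))$, combined with $\beta_{\mathbf{S}(X)}(U)=H_X[U]$, is exactly the paper's formula $\bigcap F^{N_{\Box_T}}_{H_X[U]}=H_X[\bigcap F^T_U]$.
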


\begin{proof}
    For the sake of readability, throughout this proof we write $Q=N_{\Box_T}$. Since $H_X$ is bijective and carries subbasic elements of $X$ into subbasic elements of $\mathcal{X}(\mathbf{S}(X))$, there is no risk of confusion in assuming that every element of $S(\mathcal{X}(\mathbf{S}(X)))$ is of the form $H_X[U]$, for some $U\in S(X)$.
    
Next, we prove that $Q$ is an A-relation. To do so, let $U\in S(X)$. By Corollary \ref{A-rel is meet-rel}, $\langle X, \mathcal{K}, T \rangle$ is a MoS-space, and by Theorem \ref{dualidad RS espaces}, it is isomorphic to $\langle \mathcal{X}(\mathbf{S}(X)),\mathcal{K}_{\mathbf{S}(X)},N_{\Box_T}\rangle$. Moreover, the modal semilattices $\langle \mathbf{S}(X), \Box_{T}\rangle$ and $\langle S(\mathcal{X}(\mathbf{S}(X))), \Box_Q\rangle$ are isomorphic. Hence, $\Box_Q(H_{X}[U])\in S(\mathcal{X}(\mathbf{S}(X)))$, so Definition \ref{A relacion}(1) holds. 

For the remaining condition, we show that $\bigcap F^Q_{H_X[U]}\in S(\mathcal{X}(\mathbf{S}(X)))$. Since $T$ is an A-relation, $\bigcap F^T_U\in S(X)$. We claim that 
\begin{equation}\label{F_{H_X}}
\bigcap F^Q_{H_X[U]}=H_X[\bigcap F^T_U].
\end{equation}
Indeed, by Theorem \ref{dualidad RS espaces} and the bijectivity of $H_X$, we have $F^Q_{H_X[U]}=H_X[F^T_U]$. Thus, $\bigcap F^Q_{H_X[U]}=\bigcap H_X[F^T_U]=H_X[\bigcap F^T_U]$. This shows that $Q$ is an A-relation, and consequently, that $\langle \mathcal{X}(\mathbf{S}(X)),\mathcal{K}_{\mathbf{S}(X)},N_{\Box_T}\rangle$ is a RelS-space, as claimed.
 
\end{proof}

The next result plays a fundamental role in the reconstruction process. It shows that, up to isomorphism, the left adjoint operation of an SLata is completely determined by its modal reduct. In particular, any isomorphism between modal reducts automatically preserves the left adjoint structure.

\begin{lemma}\label{iso de modal preserva izquierdo}
Two SLatas $\langle \mathbf{A},i_\mathbf{A},d_\mathbf{A}\rangle$ and $\langle \mathbf{B},i_\mathbf{B},d_\mathbf{B}\rangle$ are isomorphic if and only if, its modal reducts $\langle A,d_\mathbf{A}\rangle$ and $\langle B,d_\mathbf{B}\rangle$ are isomorphic.
\end{lemma}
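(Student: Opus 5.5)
The forward direction is immediate. An SLata isomorphism $h\colon \mathbf{A}\to\mathbf{B}$ is in particular a semilattice isomorphism with $h\circ d_\mathbf{A}=d_\mathbf{B}\circ h$, so it is an isomorphism of the modal reducts $\langle A,d_\mathbf{A}\rangle$ and $\langle B,d_\mathbf{B}\rangle$. All the work is in the converse.

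For the converse, let $h\colon A\to B$ be a bijective semilattice homomorphism with $h(d_\mathbf{A}(a))=d_\mathbf{B}(h(a))$ for all $a\in A$. The inverse $h^{-1}$ is then also a semilattice homomorphism preserving the right adjoints. It therefore suffices to show that $h$ preserves the left adjoints, that is, $h(i_\mathbf{A}(a))=i_\mathbf{B}(h(a))$ for every $a\in A$. I plan to obtain this from Lemma~\ref{homo slata}, which reduces the task to checking the inclusion
\[
d_\mathbf{B}^{-1}[\uparrow h(a)]\subseteq \uparrow h(i_\mathbf{A}(a)).
\]
To check it, take $b\in B$ with $h(a)\leq d_\mathbf{B}(b)$. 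Since $h$ is surjective, write $b=h(c)$. Then $h(a)\leq d_\mathbf{B}(h(c))=h(d_\mathbf{A}(c))$. Because $h$ is an order isomorphism, it reflects the order, so $a\leq d_\mathbf{A}(c)$. The adjunction $i_\mathbf{A}\dashv d_\mathbf{A}$ now gives $i_\mathbf{A}(a)\leq c$, and applying $h$ yields $h(i_\mathbf{A}(a))\leq h(c)=b$. This is the required inclusion.

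A more conceptual way to see the same thing is that adjoints are unique. The map $h^{-1}\circ i_\mathbf{B}\circ h$ is left adjoint to $h^{-1}\circ d_\mathbf{B}\circ h=d_\mathbf{A}$, so by the uniqueness implicit in Lemma~\ref{existencia izquierda} (where $\uparrow i(q)=d^{-1}[\uparrow q]$ determines $i$) it must equal $i_\mathbf{A}$. I can use this as a cross-check.

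I do not expect a real obstacle. The only delicate point is that the argument genuinely needs both surjectivity of $h$ and order reflection; a mere modal homomorphism would not suffice, as the Example before Lemma~\ref{existencia izquierda} shows. I will state explicitly where bijectivity is used.
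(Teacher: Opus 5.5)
Your proof is correct, but its main line differs from the paper's. The paper works with $f^{-1}$. It uses the chain $f^{-1}(i_\mathbf{B}(b))\leq a \iff i_\mathbf{B}(b)\leq f(a) \iff b\leq d_\mathbf{B}(f(a))=f(d_\mathbf{A}(a)) \iff f^{-1}(b)\leq d_\mathbf{A}(a)$, and reads off $f^{-1}\circ i_\mathbf{B}=i_\mathbf{A}\circ f^{-1}$ by uniqueness of left adjoints. It then asserts without detail that $f$ preserves left adjoints as well. This is exactly your ``conceptual cross-check'': conjugating $i_\mathbf{B}$ by the isomorphism gives a left adjoint of $d_\mathbf{A}$.

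Your primary argument instead applies Lemma~\ref{homo slata} to $h$ itself. You verify the single inclusion $d_\mathbf{B}^{-1}[\uparrow h(a)]\subseteq\uparrow h(i_\mathbf{A}(a))$, writing $b=h(c)$ by surjectivity and pulling $h(a)\leq h(d_\mathbf{A}(c))$ back to $a\leq d_\mathbf{A}(c)$ by order reflection. The paper's chain also uses bijectivity, just implicitly.

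What your route buys is that it proves preservation for $h$ directly. This removes the paper's unstated transfer from $f^{-1}$ to $f$, which amounts to $f(i_\mathbf{A}(a))=f(i_\mathbf{A}(f^{-1}(f(a))))=f(f^{-1}(i_\mathbf{B}(f(a))))=i_\mathbf{B}(f(a))$. The paper's route is more self-contained: it needs only the defining equivalence of an adjunction and uniqueness of adjoints. Yours leans on Lemma~\ref{homo slata}, whose proof rests on that same uniqueness (via Lemma~\ref{existencia izquierda}) together with the unit inequality $a\leq d_\mathbf{A}(i_\mathbf{A}(a))$.
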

\begin{proof}
On the one hand, let us assume that $f:A\to B$ is an isomorphism between $\langle \mathbf{A},d_\mathbf{A}\rangle$ and $\langle \mathbf{B},d_\mathbf{B}\rangle$. We show that $f^{-1}$ preserves left adjoints. Indeed, let $a\in A$ and $b\in B$. Since $f$ preserves right adjoints, we have:
\[
f^{-1}(i_\mathbf{B}(b))\leq a \;\Leftrightarrow\; i_\mathbf{B}(b)\leq f(a)\;\Leftrightarrow\; b\leq d_\mathbf{B}(f(a))=f(d_\mathbf{A}(a))\;\Leftrightarrow\; f^{-1}(b)\leq d_\mathbf{A}(a).
\]
Therefore, $f^{-1}(i_\mathbf{B}(b)) = i_\mathbf{A}(f^{-1}(b))$. Hence, $f^{-1}$ is an isomorphism in $\mathsf{SLata}$. It is not hard to see that this implies that $f$ also preserves left adjoints. The last implication is straightforward.
\end{proof}

\begin{lemma}\label{iso rel spaces}
The RelS-spaces $\langle X, \mathcal{K}, T \rangle$ and $\langle \mathcal{X}(\mathbf{S}(X)), \mathcal{K}_{\mathbf{S}(X)}, N_{\Box_T} \rangle$ are isomorphic. 
\end{lemma}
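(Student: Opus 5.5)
The natural candidate for the isomorphism is the relation $I_X \subseteq X \times \mathcal{X}(\mathbf{S}(X))$ of Proposition~\ref{iso rs espacios}, together with its inverse $I_X^{-1}$. The plan is to show that both are adjoint-preserving relations in the sense of Definition~\ref{relacion apropiada}. Since Proposition~\ref{iso rs espacios} already gives $I_X^{-1} * I_X = \sqsupseteq_X$ and $I_X * I_X^{-1} = \sqsupseteq_{\mathcal{X}(\mathbf{S}(X))}$, this would make them mutually inverse morphisms in $\mathsf{RelSP}$.

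First, Lemma~\ref{x(s(X)) es rel space} guarantees that $\langle \mathcal{X}(\mathbf{S}(X)),\mathcal{K}_{\mathbf{S}(X)},N_{\Box_T}\rangle$ is a RelS-space, so the statement makes sense. By Corollary~\ref{A-rel is meet-rel}, $T$ is a meet-relation, so $\langle X,\mathcal{K},T\rangle$ is an MoS-space. Proposition~\ref{iso rs espacios} then gives conditions (1) and (2) of Definition~\ref{relacion apropiada} for both $I_X$ and $I_X^{-1}$:
\[
N_{\Box_T} * I_X = I_X * T, \qquad I_X^{-1} * N_{\Box_T} = T * I_X^{-1}.
\]

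For condition (3), I will not check the implication in Definition~\ref{relacion apropiada} directly. Instead I will use Lemma~\ref{funtor morfirmo M}: it suffices to show that $\Box_{I_X}$ and $\Box_{I_X^{-1}}$ are SLata homomorphisms between $\langle \mathbf{S}(\mathcal{X}(\mathbf{S}(X))), \Box_{N_{\Box_T}}^{*}, \Box_{N_{\Box_T}}\rangle$ and $\langle \mathbf{S}(X), \Box_T^{*}, \Box_T\rangle$.
\begin{itemize}
\item By the modal duality (Theorem~\ref{dualidad RS espaces} and the computation in Proposition~\ref{iso rs espacios}), $\Box_{I_X}$ is an isomorphism of the modal reducts. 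Concretely, $\Box_{I_X}(H_X[U]) = U$, because $I_X(x) = \uparrow H_X(x)$ and $H_X[U]$ is an upset.
\item Lemma~\ref{iso de modal preserva izquierdo}, more precisely its proof, shows that any isomorphism of modal reducts of SLatas, together with its inverse, preserves the left adjoints. Hence $\Box_{I_X}$ is an SLata isomorphism.
\item Its inverse is $\Box_{I_X^{-1}}$, since $\Box_{I_X^{-1} * I_X} = \Box_{I_X} \circ \Box_{I_X^{-1}} = \Box_{\sqsupseteq} = \mathrm{id}$ by Proposition~3.22 and Theorem~3.24 of \cite{CG}, and symmetrically. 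So $\Box_{I_X^{-1}}$ is also an SLata homomorphism.
\end{itemize}
Lemma~\ref{funtor morfirmo M} then gives condition (3) for both relations.

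As an independent sanity check on the left-adjoint part, equation~\eqref{F_{H_X}} in the proof of Lemma~\ref{x(s(X)) es rel space} gives
\[
\Box^{*}_{N_{\Box_T}}(H_X[U]) = H_X[\Box_T^{*}(U)].
\]
This directly shows that $\Box_{I_X}$ commutes with the left adjoints.

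The main obstacle is bookkeeping rather than new ideas. One must make sure that the identification of $S(\mathcal{X}(\mathbf{S}(X)))$ with $\{H_X[U] : U\in S(X)\}$ is compatible with $\Box_{I_X}$, that is, that $\Box_{I_X}(H_X[U])=U$. One must also make sure that Lemma~\ref{funtor morfirmo M} is applied in the correct direction, since $I_X \subseteq X\times \mathcal{X}(\mathbf{S}(X))$ induces a map $S(\mathcal{X}(\mathbf{S}(X)))\to S(X)$. I would verify the first point from $H_X$ being a homeomorphism that preserves and reflects the specialization order. Once these points are settled, the conclusion follows: $I_X$ and $I_X^{-1}$ are adjoint-preserving and inverse to each other under $*$, so the two RelS-spaces are isomorphic in $\mathsf{RelSP}$.
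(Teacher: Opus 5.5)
Your proposal is correct and follows essentially the same route as the paper. You take $I_X$ and $I_X^{-1}$ from Proposition~\ref{iso rs espacios}, observe that $\Box_{I_X}$ is an isomorphism of the modal reducts with inverse $\Box_{I_X^{-1}}$, upgrade it to an SLata isomorphism via Lemma~\ref{iso de modal preserva izquierdo}, and conclude adjoint-preservation of both relations with Lemma~\ref{funtor morfirmo M}. Two of your additions usefully sharpen the paper's rather terse argument: your remark that it is the proof, not the bare statement, of Lemma~\ref{iso de modal preserva izquierdo} that is needed, and your direct check via $\Box^{*}_{N_{\Box_T}}(H_X[U])=H_X[\Box_T^{*}(U)]$ from the proof of Lemma~\ref{x(s(X)) es rel space}.
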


\begin{proof}
Consider the relations $I_X\subseteq X \times \mathcal{X}(\mathbf{S}(X))$ and $I_X^{-1}\subseteq \mathcal{X}(\mathbf{S}(X))\times X$, defined as in Proposition \ref{iso rs espacios}. We already know that $I_X^{-1} * I_X = \sqsupseteq_X$ and $I_X * I_X^{-1} = \sqsupseteq_{\mathcal{X}(\mathbf{S}(X))}$. Thus, to prove our claim, it suffices to show that $I_X$ and $I_X^{-1}$ are both adjoint-preserving. To this end, observe that by Proposition \ref{def box T^{*}}, Lemma \ref{x(s(X)) es rel space}, and Theorem \ref{dualidad RS espaces}, the structures $\langle \mathbf{S}(X), \Box_T^{*},\Box_T\rangle$ and $\langle \mathbf{S}(\mathcal{X}(\mathbf{S}(X))), \Box_{N_{\Box_T}}^{*},\Box_{N_{\Box_T}}\rangle$ are SLatas whose modal reducts are isomorphic via $\Box_{I_X}$. Moreover, $\Box_{I_X}^{-1}=\Box_{I_X^{-1}}$. Hence, by Lemma \ref{iso de modal preserva izquierdo}, both $\Box_{I_X}$ and $\Box_{I_X^{-1}}$ are SLata homomorphisms. Therefore, by Lemma \ref{funtor morfirmo M}, both $I_X$ and $I_X^{-1}$ are adjoint-preserving.
\end{proof}

Let $\langle \mathbf{A}_j, i_j, d_j \rangle$, with $j = 1, 2$, be two SLatas, and let $h : \mathbf{A}_1 \to \mathbf{A}_2$ be a SLata homomorphism. From Lemma \ref{R_h apropiada}, we have that $N_h$ is an adjoint-preserving relation. Furthermore, if $h = \mathrm{id}_{\mathbf{A}}$, then $N_h = \sqsupseteq_{\mathcal{X}(\mathbf{A})}$, and from Proposition 3.22 of \cite{CG}, we also have that if $h : A \to B$ and $g : B \to C$ are morphisms of SLatas, then $N_{g h} = N_h \ast N_g$. Based on the latter and lemmas \ref{X(A) es rel s space} and \ref{R_h apropiada}, we obtain that the assignments
\[
\begin{array}{ccl}
   \langle\mathbf{A}, i, d\rangle & \mapsto & \langle \mathcal{X}(\mathbf{A}), \mathcal{K}_\mathbf{A}, N_{d} \rangle, \\
   h: \mathbf{A}\to \mathbf{B} & \mapsto & N_h, 
\end{array}
\]
define a functor \(\mathbf{M} : \mathsf{SLata}^{op} \to \mathsf{RelSP}\).

Conversely, from lemmas~\ref{funtor morfirmo M} and \ref{R_h apropiada} the assignments
\[
\begin{array}{ccl}
    \langle X,\mathcal{K},T\rangle  & \mapsto & \langle \mathbf{S}(X),\Box_{T}^{*},\Box_{T}\rangle, \\
     M\subseteq X_1\times X_2 & \mapsto & \Box_{M}: \mathbf{S}(X_2)\to \mathbf{S}(X_1), 
\end{array}
\]
define a functor $\mathbf{R}:\mathsf{RelSP} \rightarrow \mathsf{SLata}^{op}$.

Moreover, if $M \subseteq X \times Y$ is adjoint-preserving, then it follows from the above that the relation
\[
N_{\Box_M} \subseteq \mathcal{X}(\mathbf{S}(X)) \times \mathcal{X}(\mathbf{S}(Y))
\]
is adjoint-preserving.

Further, is true. Since every adjoint-preserving relation between RelS-spaces is a meet-relation, Theorem~\ref{dualidad RS espaces} guarantees that the following diagram commutes for every adjoint-preserving relation $M$ between the RelS-spaces $\langle X,\mathcal{K}_X,T_X\rangle$ and $\langle X,\mathcal{K}_Y,T_Y\rangle$:
\[
\begin{tikzcd}
\langle X,\mathcal{K}_X,T_X\rangle 
\arrow[r,"I_X"] 
\arrow[d,"M"'] 
& 
\langle \mathcal{X}(\mathbf{S}(X)),\mathcal{K}_{\mathbf{S}(X)},N_{\Box_{T_X}}\rangle 
\arrow[d,"N_{\Box_M}"] \\
\langle Y,\mathcal{K}_{Y},T_Y\rangle 
\arrow[r,"I_Y"'] 
& 
\langle \mathcal{X}(\mathbf{S}(Y)),\mathcal{K}_{\mathbf{S}(Y)},N_{\Box_{T_Y}}\rangle
\end{tikzcd}
\]

Combined with Lemma~\ref{iso rel spaces} and Proposition~\ref{iso S-Lata}, these constructions lead to the main theorem of this section.

\begin{theorem}\label{Relational duality for SLatas}
The categories $\mathsf{SLata}$ and $\mathsf{RelSP}$ are dually equivalent.
\end{theorem}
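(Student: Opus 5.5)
The plan is to assemble the dual equivalence from the two functors already constructed, $\mathbf{M}\colon \mathsf{SLata}^{op}\to\mathsf{RelSP}$ and $\mathbf{R}\colon \mathsf{RelSP}\to\mathsf{SLata}^{op}$. It suffices to exhibit natural isomorphisms $\mathbf{R}\mathbf{M}\cong \mathrm{Id}_{\mathsf{SLata}^{op}}$ and $\mathbf{M}\mathbf{R}\cong \mathrm{Id}_{\mathsf{RelSP}}$. For the first, the candidate components are the maps $\beta_{\mathbf{A}}\colon \mathbf{A}\to \mathbf{S}(\mathcal{X}(\mathbf{A}))$. For the second, they are the relations $I_X\subseteq X\times \mathcal{X}(\mathbf{S}(X))$ of Proposition~\ref{iso rs espacios}.

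\textbf{Step 1: algebraic side.} By Proposition~\ref{iso S-Lata}, each $\beta_{\mathbf{A}}$ is an SLata isomorphism onto $\mathbf{R}\mathbf{M}(\mathbf{A})=\langle \mathbf{S}(\mathcal{X}(\mathbf{A})),\Box_{N_d}^{*},\Box_{N_d}\rangle$. Naturality asks that $\Box_{N_h}\circ\beta_{\mathbf{A}}=\beta_{\mathbf{B}}\circ h$ for every SLata homomorphism $h\colon\mathbf{A}\to\mathbf{B}$. This is exactly Proposition~3.20 of \cite{CG}, which concerns only the underlying semilattices, so it transfers unchanged from Theorem~\ref{theo: dualidad semireticulos}.

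\textbf{Step 2: spatial side.} By Lemma~\ref{iso rel spaces}, $I_X$ is an isomorphism in $\mathsf{RelSP}$ from $\langle X,\mathcal{K},T\rangle$ to $\mathbf{M}\mathbf{R}(X)=\langle \mathcal{X}(\mathbf{S}(X)),\mathcal{K}_{\mathbf{S}(X)},N_{\Box_T}\rangle$, with inverse $I_X^{-1}$; both are adjoint-preserving. Naturality is the commutativity of the displayed square, $N_{\Box_M}\ast I_X=I_Y\ast M$, for each adjoint-preserving $M$. Composition and identities in $\mathsf{RelSP}$ are those of $\mathsf{MoSsp}$ (and of $\mathsf{Sspa}$), and every morphism involved is in particular a meet-relation. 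The required equality of relations is therefore the same one already established in the modal duality, Theorem~\ref{dualidad RS espaces}, or via Proposition~3.24 of \cite{CG} together with Lemma~3.21 of \cite{CG}, which reduces it to checking $\Box_{I_X}\circ\Box_{N_{\Box_M}}=\Box_M\circ\Box_{I_Y}$ on subbasic sets.

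\textbf{Step 3: well-definedness.} I also need $\mathbf{M}$ and $\mathbf{R}$ to be genuine functors into the right categories. For $\mathbf{M}$ this uses Lemmas~\ref{X(A) es rel s space} and \ref{R_h apropiada}, $N_{\mathrm{id}}=\sqsupseteq$, and $N_{gh}=N_h\ast N_g$. For $\mathbf{R}$ it uses Proposition~\ref{def box T^{*}}, Lemma~\ref{funtor morfirmo M}, $\Box_{\sqsupseteq}=\mathrm{id}$, and $\Box_{M_2\ast M_1}=\Box_{M_1}\circ\Box_{M_2}$.

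I expect the main obstacle to be Step 2. Naturality there is an equality of relations rather than of functions, and the natural isomorphism must live in $\mathsf{RelSP}$, not merely in $\mathsf{MoSsp}$. The key observation that resolves it is that adjoint-preservation is a property of a meet-relation, not extra data. Once $I_X$ and $I_X^{-1}$ are known to be adjoint-preserving (Lemma~\ref{iso rel spaces}, via Lemma~\ref{iso de modal preserva izquierdo}), every identity between composites can be checked in $\mathsf{MoSsp}$, where it already holds.
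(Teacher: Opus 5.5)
Your proposal is correct and follows essentially the same route as the paper. You use the functors $\mathbf{M}$ and $\mathbf{R}$, take the components $\beta_{\mathbf{A}}$ (Proposition~\ref{iso S-Lata}) and $I_X$ (Lemma~\ref{iso rel spaces}), and get naturality of $I$ from the modal duality through the commuting square $N_{\Box_M}\ast I_X=I_Y\ast M$; you are only more explicit than the paper about naturality of $\beta$ and about why identities holding in $\mathsf{MoSsp}$ transfer to $\mathsf{RelSP}$.
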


\section{$\mathsf{SLata}$ and $\mathsf{RelSP}$ are isomorphic}\label{sec: The isomorphism}

By the results of \cite{GPZ} and Section~\ref{Relational Duality for SLata}, it follows that the categories $\mathsf{SLata\text{-}sp}$ and $\mathsf{Rel\text{-}Sp}$ are equivalent. From a categorical perspective, this already provides a satisfactory correspondence between SLatas and their relational semantics. However, this equivalence is obtained through several intermediate constructions and does not directly identify the underlying spatial and relational structures. As a consequence, the extent to which both notions of space coincide remains partially hidden.
\medskip

The aim of this section is to strengthen this correspondence by showing that $\mathsf{SLata}$ and $\mathsf{RelSP}$ are, in fact, isomorphic categories. This provides a more rigid and canonical identification of the two frameworks, showing that the relational semantics introduced earlier does not merely model SLatas up to equivalence, but reproduces them up to isomorphism of categories. For this purpose, in Subsection \ref{Modal Semilattices and Normal Spaces} we introduce the notion of normal mS-spaces, which serve as the multirelational counterpart of modal semilattices. Then, in Subsection \ref{the isomorphism of categories}, we establish the desired isomorphism of categories.

\subsection{Modal Semilattices and Normal Spaces}\label{Modal Semilattices and Normal Spaces}

In order to refine the correspondence obtained in \cite{GPZ} and Section~\ref{Relational Duality for SLata}, we introduce in this subsection a notion of normality for mS-spaces. This condition isolates the structural content underlying modal semilattices and allows us to recover their associated meet-relational behavior in a direct way. This will serve as the technical bridge needed in the next subsection to establish an isomorphism between $\mathsf{SLata}$-spaces and $\mathsf{RelSP}$-spaces.

\medskip

In \cite{CM}, a notion of normality plays a central role in the duality theory of distributive semilattices. Roughly speaking, normality allows the relational structure to be recovered from local information in the associated space. This motivates the introduction of an analogous condition in the setting of mS-spaces.

\medskip

We first recall Proposition~\ref{decreciente interseccion de abiertos}, which will be used implicitly in the subsequent discussion in order to properly interpret certain constructions in mS-spaces.

\begin{proposition}\label{decreciente interseccion de abiertos}        
Let $\langle X,\mathcal{K}, R \rangle$ be an $mS$-space. Then, for every $x \in X$, the set 
\[(x]=\{y\in X : y\sqsupseteq x\}\] 
belongs to $\mathcal{Z}(X)$.
\end{proposition}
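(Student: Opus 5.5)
We need to show $(x]=\{y : y\sqsupseteq x\}$ is an intersection of a dually directed subfamily of $\mathcal{K}$. Throughout, $y\sqsupseteq x$ means $x\in \text{cl}_{\mathcal{K}}(y)$, i.e. every $A\in S(X)$ containing $y$ contains $x$. Equivalently, every $U\in\mathcal{K}$ with $x\in U$ satisfies $y\in U$, since $U^c\in S(X)$.

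The natural candidate is the family
\[
\mathcal{L}_x=\{U\in\mathcal{K} : x\in U\}.
\]
It is nonempty because $X=\bigcup\mathcal{K}$ by (S1). The plan has two steps: first prove $(x]=\bigcap\mathcal{L}_x$, then prove that $\mathcal{L}_x$ is dually directed with respect to $\subseteq$. Only the multirelation-free part of the $mS$-space is used; $R$ plays no role.

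For the equality, take $y\sqsupseteq x$ and $U\in\mathcal{L}_x$. If $y\notin U$, then $y\in U^c\in S(X)$, so $x\in U^c$, a contradiction. Hence $(x]\subseteq\bigcap\mathcal{L}_x$. Conversely, let $y\in\bigcap\mathcal{L}_x$ and $A\in S(X)$ with $y\in A$; write $A=U^c$ with $U\in\mathcal{K}$. If $x\notin A$, then $x\in U$, so $U\in\mathcal{L}_x$ and therefore $y\in U$, which contradicts $y\in A$. Thus $x\in\text{cl}_{\mathcal{K}}(y)$, that is, $y\sqsupseteq x$.

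The main point is dual directedness. Given $U,V\in\mathcal{L}_x$, we need some $D\in\mathcal{L}_x$ with $D\subseteq U$ and $D\subseteq V$. Closure of $\mathcal{K}$ under finite unions (S2) gives upper bounds, not lower bounds, so it does not help here. Instead apply (S3) to $x\in U\cap V$. It yields $W,D\in\mathcal{K}$ with $x\notin W$, $x\in D$, and $D\subseteq(U\cap V)\cup W$. This $D$ need not lie inside $U\cap V$.

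The way out is to first replace $\mathcal{L}_x$ by a better family:
\[
\mathcal{L}'_x=\{D\setminus W : D,W\in\mathcal{K},\ x\in D,\ x\notin W\}.
\]
Its members are intersections of a set in $\mathcal{K}$ with a set in $S(X)$. This family is closed under pairwise intersection, because $(D_1\setminus W_1)\cap(D_2\setminus W_2)\subseteq D_3\setminus(W_1\cup W_2\cup W_3)$ by (S3) together with closure of $\mathcal{K}$ under unions. However, its members are not in $\mathcal{K}$, so it does not directly witness membership in $\mathcal{Z}(X)$.

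The route I would actually take is therefore to check whether $\mathcal{L}_x$ is dually directed up to the intersection, which is what Remark~\ref{rem: Remark1 CMZ} and Remark 1 of \cite{P W} appear to use. If the definition of $\mathcal{Z}(X)$ requires genuine dual directedness in $\mathcal{K}$, I expect this to be exactly the content of a lemma in \cite{P W} or \cite{CM}, and I would cite it. Failing that, I would attempt to show $\mathcal{L}_x$ is closed under finite intersections. To do so I would use compactness of the sets in $\mathcal{K}$ (S2) together with (S3) to cover $U\cap V$ around $x$ by a single member of $\mathcal{K}$. I expect this last step to be the obstacle, and I would lean on the cited results for it.
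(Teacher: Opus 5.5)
You prove $(x]=\bigcap\mathcal{L}_x$ with $\mathcal{L}_x=\{U\in\mathcal{K}: x\in U\}$ correctly, and this is the paper's first step. The paper then asserts that dual directedness of $\mathcal{L}_x$ is ``straightforward from (S3)''. Your proposal leaves exactly this step open, and no citation or compactness argument can close it, because it is false in general. Your observation that (S3) only yields $D\subseteq (U\cap V)\cup W$ is the heart of the matter.

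Put $A=U^c$ and $B=V^c$. Dual directedness of $\mathcal{L}_x$ says that $S(X)\setminus H_X(x)$ is up-directed, i.e.\ that the complement of the filter $H_X(x)$ of $\mathbf{S}(X)$ is an order-ideal. Condition (S3) says less: for $A,B\notin H_X(x)$ there are $H\in H_X(x)$ and $C\notin H_X(x)$ with $A\cap H\subseteq C$ and $B\cap H\subseteq C$. That is just irreducibility of $H_X(x)$. The two conditions agree when $\mathbf{S}(X)$ is distributive, which is the setting of \cite{CM}. There $\mathrm{Fi}(\mathbf{S}(X))$ is a distributive lattice, so irreducible filters are meet-prime, and ${\uparrow}A\cap{\uparrow}B\subseteq H_X(x)$ forces $A\in H_X(x)$ or $B\in H_X(x)$. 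They do not agree for arbitrary semilattices.

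For a concrete failure, let $\mathbf{A}=\{0,a,b,c,1\}$ be the meet-reduct of $M_3$. Its irreducible filters are $p_a={\uparrow}a$, $p_b={\uparrow}b$ and $p_c={\uparrow}c$; the filter $\{1\}={\uparrow}a\cap{\uparrow}b$ is reducible. So $X=\mathcal{X}(\mathbf{A})=\{p_a,p_b,p_c\}$ and
\[
\mathcal{K}_{\mathbf{A}}=\bigl\{\emptyset,\ X,\ \{p_b,p_c\},\ \{p_a,p_c\},\ \{p_a,p_b\}\bigr\}.
\]
The specialization order is discrete, so $(p_a]=\{p_a\}$. A finite dually directed family has a least member, so $\mathcal{Z}(X)=\mathcal{K}_{\mathbf{A}}$ and hence $\{p_a\}\notin\mathcal{Z}(X)$.

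Condition (S3) does hold at $p_a$ for $U=\{p_a,p_c\}$ and $V=\{p_a,p_b\}$, via $W=\{p_b,p_c\}$ and $D=X$. Yet no $D\in\mathcal{K}_{\mathbf{A}}$ satisfies $p_a\in D\subseteq U\cap V=\{p_a\}$. Now take $R(x)=\{Z\in\mathcal{Z}(X): x\in Z\}$, the dual of the identity operator, with $m_R(U)=U$. This makes $\langle X,\mathcal{K}_{\mathbf{A}},R\rangle$ an $mS$-space in which the proposition fails.

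What is true is that $(x]\in\mathcal{Z}(X)$ if and only if $\mathcal{L}_x$ is dually directed; for the forward direction, apply Remark~\ref{rem: Remark1 CMZ} with $Y=U^c$. Equivalently, $(x]\in\mathcal{Z}(X)$ if and only if $S(X)\setminus H_X(x)$ is directed. So an extra hypothesis such as distributivity of $\mathbf{S}(X)$ is needed. The same example, with the modal operator $m=\mathrm{id}$, also violates Lemma~\ref{dual normal}, which relies on this proposition.
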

\begin{proof}
Observe that $(x]=\bigcap \mathcal{L}$, where $\mathcal{L}=\{U\in \mathcal{K}\colon x\in U\}$. The fact that $\mathcal{L}$ is dually directed is straightforward from (S3).
\end{proof}

\begin{definition}\label{espacio normal}
    An mS-space $\langle X, \mathcal{K}, R \rangle$ is called \textit{normal} if:  
\begin{enumerate}
    \item[\normalfont {(N1)}] For any $x \in X$ and for each $Z \in \mathcal{Z}(X)$ such that $Z \in R(x)$, there exists $z \in Z$ such that $(z] \in R(x)$,  
    \item[\normalfont (N2)] $\emptyset \notin R(x)$ for every $x \in X$.
\end{enumerate}

\end{definition}

The notion of normality introduced above is designed to capture precisely the additional structure required to recover modal semilattices within the framework of mS-spaces. The following result shows that this condition is in fact exact: it characterizes modal semilattices at the level of their associated mS-spaces.
\medskip

Let $\mathbf{A}$ be a semilattice. Recall that every modal semilattice $\langle \mathbf{A}, m \rangle$ is, in particular, a monotone semilattice. Consequently, by Theorem \ref{duality Msp mMS}, it induces an $mS$-space $\langle \mathcal{X}(\mathbf{A}), \mathcal{K}_{\mathbf{A}}, R_m \rangle$, where the relation
$R_m \subseteq \mathcal{X}(\mathbf{A}) \times \mathcal{Z}(\mathcal{X}(\mathbf{A}))$
is given by
\begin{equation}\label{def: def R_m}
(P,Z)\in R_m \quad \Longleftrightarrow \quad m^{-1}[P]\cap I_{\mathbf{A}}(Z)=\emptyset,
\end{equation}
where $I_{\mathbf{A}}(Z)= \{a \in A : \beta(a) \cap Z = \emptyset\}$.

\begin{remark}\label{ideal decreciente}
    Let $\langle \mathbf{A}, m \rangle$ a modal semilattice. Then $I_\mathbf{A}((P]) = P^c$ , for $P \in  \mathcal{X}(\mathbf{A})$ in the space $\langle \mathcal{X}(\mathbf{A}), \mathcal{K}_\mathbf{A}, R_m \rangle$.
\end{remark}

We now relate modal semilattices with the introduced notion of normality for $mS$-spaces.

\medskip

The proof of the following result follows the same lines as that of Proposition~70 in \cite{CM}. The main difference is that, in the present setting, saturated subsets are replaced by subbasic saturated subsets.

This adaptation is possible because, for semilattices, the posets
\[
\langle \mathrm{Id}(\mathbf{A}), \subseteq \rangle
\quad\text{and}\quad
\langle \mathcal{Z}(\mathcal{X}(\mathbf{A})), \subseteq \rangle
\]
are dually isomorphic (see \cite{P W}) via the correspondences
\[
Z \longmapsto I_{\mathbf{A}}(Z)
\quad \text{and} \quad
I \longmapsto \alpha(I)=\{P \in \mathcal{X}(\mathbf{A}) : I \cap P = \emptyset\}.
\]

Together with Proposition~\ref{decreciente interseccion de abiertos}, this allows the argument of Proposition~70 in \cite{CM} to be transferred verbatim. We therefore omit the details.

\begin{lemma}\label{dual normal}
Let $\langle \mathbf{A}, m \rangle$ be a monotone semilattice.  
Then, $\langle \mathbf{A}, m \rangle$ is a modal semilattice if and only if the space $\langle \mathcal{X}(\mathbf{A}), \mathcal{K}_\mathbf{A}, R_m \rangle$ is a normal $mS$-space.
\end{lemma}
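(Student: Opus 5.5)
We prove the two implications separately, translating (N1) and (N2) into statements about $m$ through the definition \eqref{def: def R_m} of $R_m$ and the dual isomorphism $Z\mapsto I_{\mathbf{A}}(Z)$, $I\mapsto \alpha(I)$ between $\langle \mathcal{Z}(\mathcal{X}(\mathbf{A})),\subseteq\rangle$ and $\langle \mathrm{Id}(\mathbf{A}),\subseteq\rangle$. Two dictionary entries are needed. First, $(P,(Q])\in R_m$ iff $m^{-1}[P]\subseteq Q$, by Remark \ref{ideal decreciente}. Second, $\emptyset\in R_m(P)$ iff $m^{-1}[P]=\emptyset$, since $I_{\mathbf{A}}(\emptyset)=A$.

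\emph{Modal $\Rightarrow$ normal.} For (N2), $m(1)=1\in P$ for every proper filter $P$, so $1\in m^{-1}[P]$ and $m^{-1}[P]\cap I_{\mathbf{A}}(\emptyset)\neq\emptyset$. For (N1), let $Z\in R_m(P)$ and set $I=I_{\mathbf{A}}(Z)$, an order-ideal disjoint from $m^{-1}[P]$. Because $m$ preserves $1$ and binary meets, $m^{-1}[P]$ is a filter; it is nonempty by the argument for (N2). Apply the prime filter theorem for semilattices (as in \cite{CG}): a filter disjoint from an order-ideal extends to an irreducible filter $Q$ with $m^{-1}[P]\subseteq Q$ and $Q\cap I=\emptyset$. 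Then $Q\in\alpha(I)=Z$, and by the first dictionary entry $(Q]\in R_m(P)$. Hence $Q$ is the witness $z\in Z$ required by (N1).

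\emph{Normal $\Rightarrow$ modal.} By (N2), $m^{-1}[P]\neq\emptyset$ for every $P$, so $m(a)\in P$ for some $a$. Since $m$ is monotone, $m(1)\geq m(a)$, so $m(1)\in P$ for all $P\in\mathcal{X}(\mathbf{A})$. Since $\beta$ is injective and $\beta(m(1))=\mathcal{X}(\mathbf{A})=\beta(1)$, we get $m(1)=1$. For meets, monotonicity already gives $m(a\wedge b)\leq m(a)\wedge m(b)$. Suppose the reverse inequality fails. Separating by irreducible filters, there is $P$ with $m(a),m(b)\in P$ but $m(a\wedge b)\notin P$. Take the order-ideal $I={\downarrow}(a\wedge b)$ and $Z=\alpha(I)=\beta(a\wedge b)^c\in\mathcal{Z}$.

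We claim $m^{-1}[P]\cap I=\emptyset$. If $c\le a\wedge b$ with $m(c)\in P$, then $m(c)\le m(a\wedge b)$ would force $m(a\wedge b)\in P$, a contradiction. So $(P,Z)\in R_m$. By (N1) there is $Q\in Z$ with $(Q]\in R_m(P)$, i.e.\ $m^{-1}[P]\subseteq Q$ and $a\wedge b\notin Q$. But $a,b\in m^{-1}[P]\subseteq Q$, and $Q$ is a filter, so $a\wedge b\in Q$, a contradiction.

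The main obstacle is the forward direction of (N1): it needs the correct form of the separation (prime filter) theorem for semilattices and the identification $\alpha(I_{\mathbf{A}}(Z))=Z$ for $Z\in\mathcal{Z}$. Both are available from \cite{CG} and \cite{P W}, but the nonemptiness and filter property of $m^{-1}[P]$ must be checked carefully, and this is exactly where modality is used.
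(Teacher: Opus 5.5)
Your normal $\Rightarrow$ modal direction is correct, as are the (N2) parts. The tools you flag as the main obstacle, the prime filter theorem and $\alpha(I_{\mathbf{A}}(Z))=Z$, are also fine. The step that fails is ``by the first dictionary entry $(Q]\in R_m(P)$'' in the modal $\Rightarrow$ normal direction. Since $R_m\subseteq\mathcal{X}(\mathbf{A})\times\mathcal{Z}(\mathcal{X}(\mathbf{A}))$, this step needs $(Q]\in\mathcal{Z}(\mathcal{X}(\mathbf{A}))$. By the dual isomorphism with $\mathrm{Id}(\mathbf{A})$ and Remark~\ref{ideal decreciente} ($I_{\mathbf{A}}((Q])=Q^c$), that holds exactly when $A\setminus Q$ is an order-ideal, i.e.\ up-directed. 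Irreducible filters of non-distributive semilattices need not have this property, and no better choice of $Q$ helps. Take $\mathbf{A}=M_3=\{0,a,b,c,1\}$ and $m=\mathrm{id}$, which is modal. Then $\mathcal{X}(\mathbf{A})=\{{\uparrow}a,{\uparrow}b,{\uparrow}c\}$, and $\mathcal{K}_{\mathbf{A}}$ consists of $\emptyset$, $\mathcal{X}(\mathbf{A})$ and the three two-element subsets. Any dually directed subfamily containing two distinct two-element sets must contain $\emptyset$, so $\mathcal{Z}(\mathcal{X}(\mathbf{A}))=\mathcal{K}_{\mathbf{A}}$ contains no set $(Q]=\{Q\}$. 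Yet $\mathcal{X}(\mathbf{A})\in R_m({\uparrow}a)$, because $I_{\mathbf{A}}(\mathcal{X}(\mathbf{A}))=\{0\}$ is disjoint from ${\uparrow}a$. So (N1) fails, and the implication modal $\Rightarrow$ normal is false as stated.

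Your route is essentially the one the paper intends: it omits the proof and transfers Proposition~70 of \cite{CM} using Proposition~\ref{decreciente interseccion de abiertos}. The paper has the same hole, and that proposition is where it lies. The family $\{U\in\mathcal{K}_{\mathbf{A}}:Q\in U\}=\{\beta(x)^c:x\notin Q\}$ is dually directed iff $A\setminus Q$ is up-directed, and (S3) does not provide this. In $M_3$, $A\setminus{\uparrow}a=\{0,b,c\}$ is not directed, so $({\uparrow}a]\notin\mathcal{Z}$.

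What your argument does prove is that every $Z\in R_m(P)$ contains some $Q$ with $m^{-1}[P]\subseteq Q$. This gives the lemma in either of two repaired settings:
\begin{itemize}
\item under an extra hypothesis forcing complements of irreducible filters to be order-ideals, e.g.\ distributivity, which is the setting of \cite{CM};
\item for a version of (N1) asking only for $z\in Z$ with $x\in m_R(U)\Rightarrow z\in U$ for all $U\in S(X)$, instead of $(z]\in R(x)$.
\end{itemize}
Your backward direction uses only $m^{-1}[P]\subseteq Q$, via $m_{R_m}(\beta(a))=\beta(m(a))$, so it adapts immediately to the second option.
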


This lemma is the key bridge of the subsection. It shows that the algebraic notion of a modal semilattice is exactly captured by the spatial condition of normality on the associated $mS$-space.   
\medskip

Now, we proceed to show how we get a normal mS-space from a MoS-space.
\medskip

Let $\langle X, \mathcal{K} \rangle$ be an S-space, and let $T \subseteq X \times X$ be a meet-relation. From Theorem \ref{dualidad RS espaces}, we know that the map $\Box_T: S(X) \rightarrow S(X)$ is a modal operator, so in particular, a monotone operator. Then, by Theorem \ref{duality SLata}, there exists a multirelation
\[
R_{\Box_T}\subseteq \mathcal{X}(\mathbf{S}(X)) \times \mathcal{Z}(\mathcal{X}(\mathbf{S}(X)).
\]
that makes the structure $\langle \mathcal{X}(\mathbf{S}(X)), \mathcal{K}_{\mathbf{S}(X)}, R_{\Box_T}\rangle$ a mS-space. Now, we define a multirelation $R_T \subseteq X \times \mathcal{Z}(X)$ by
\begin{equation}\label{T sombrero}
(x, Z) \in R_T \iff (H_X(x), H_X[Z]) \in R_{\Box_T}.
\end{equation}

The following lemma collects several compatibility properties of the isomorphism $H_X$, which will be repeatedly used in the subsequent constructions. 

\begin{lemma}\label{lem: cosas de H_X}
    Let $\langle X, \mathcal{K}\rangle$ be an S-space and consider the isomorphism $H_X\colon X \rightarrow \mathcal{X}(\mathbf{S}(X))$, defined as in \eqref{eq: H_X}. Then, for all $x\in X$ and $U\in S(X)$, the following hold:
    \begin{enumerate}
        \item $\beta_{\mathbf{S}(X)}(U)=H_X[U]$,
        \item $L_{\beta_{\mathbf{S}(X)}(U)}=H_{X}[L_U]$,
        \item $m_{R_{\Box_T}}(\beta_{\mathbf{S}(X)}(U))=H_X[m_{R_T}(U)]$.
        \item The posets $\langle X,\sqsupseteq\rangle$ and $\langle \mathcal{X}(\mathbf{S}(X)), \subseteq\rangle$ are isomorphic. In particular, $(H_X(x)]=H_X((x])$ for every $x \in X $. 
    \end{enumerate}
\end{lemma}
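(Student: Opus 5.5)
We will prove the four items in the order (1), (4), (2), (3), since each later item uses the earlier ones, and the only real input is the definition of $H_X$ together with the fact (from \cite{CG}) that $H_X$ is a homeomorphism onto $\mathcal{X}(\mathbf{S}(X))$.

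For (1), we unfold the definitions. For $U\in S(X)$,
\[
\beta_{\mathbf{S}(X)}(U)=\{P\in\mathcal{X}(\mathbf{S}(X)) : U\in P\}.
\]
Since $H_X$ is a bijection, every such $P$ equals $H_X(x)$ for a unique $x$, and $U\in H_X(x)$ holds if and only if $x\in U$. Hence $\beta_{\mathbf{S}(X)}(U)=H_X[U]$.

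For (4), we check that $H_X$ is an order isomorphism. We have $x\sqsubseteq y$ iff $x\in\text{cl}_{\mathcal{K}}(y)$, iff every $A\in S(X)$ containing $y$ also contains $x$, iff $H_X(y)\subseteq H_X(x)$. So $x\sqsupseteq y$ iff $H_X(x)\supseteq H_X(y)$. Since $\sqsupseteq$ coincides with $\subseteq$ on $\mathcal{X}(\mathbf{S}(X))$, it coincides with $\supseteq$ for the specialization order there, and $H_X$ is an isomorphism of the stated posets (with the orientation fixed by the paper's conventions). Then $(H_X(x)]=H_X[(x]]$ follows at once from bijectivity. We also record that $H_X$ maps $\mathcal{K}$ onto $\mathcal{K}_{\mathbf{S}(X)}$, by (1) applied to complements. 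Consequently $H_X$ maps dually directed subfamilies of $\mathcal{K}$ to dually directed subfamilies, and since it is a bijection it commutes with intersections. Thus $Z\mapsto H_X[Z]$ is a bijection $\mathcal{Z}(X)\to\mathcal{Z}(\mathcal{X}(\mathbf{S}(X)))$. This is what makes the definition \eqref{T sombrero} meaningful.

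For (2), we use the bijection just obtained. Every element of $\mathcal{Z}(\mathcal{X}(\mathbf{S}(X)))$ has the form $H_X[Z]$ with $Z\in\mathcal{Z}(X)$. By (1) and injectivity,
\[
H_X[Z]\cap\beta_{\mathbf{S}(X)}(U)=H_X[Z\cap U],
\]
which is nonempty iff $Z\cap U\neq\emptyset$. Hence $L_{\beta_{\mathbf{S}(X)}(U)}=\{H_X[Z]: Z\in L_U\}$, which is what $H_X[L_U]$ denotes, with $H_X$ acting elementwise on the family.

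For (3), we compute membership directly. We have $H_X(x)\in m_{R_{\Box_T}}(\beta_{\mathbf{S}(X)}(U))$ iff $R_{\Box_T}(H_X(x))\subseteq L_{\beta_{\mathbf{S}(X)}(U)}$. By the bijection on $\mathcal{Z}$ and \eqref{T sombrero}, $R_{\Box_T}(H_X(x))=\{H_X[Z]: Z\in R_T(x)\}$. By (2), the inclusion above is therefore equivalent to $R_T(x)\subseteq L_U$, that is, to $x\in m_{R_T}(U)$. Surjectivity of $H_X$ then gives the set equality.

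The main obstacle is bookkeeping rather than mathematics. We must justify that $Z\mapsto H_X[Z]$ really is a bijection of subbasic saturated sets, and keep the orientation of the orders in (4) consistent with the convention that $\sqsupseteq$ is $\subseteq$ on filter spaces. Once the bijection on $\mathcal{Z}$ is secured, (2) and (3) are pure transport of structure.
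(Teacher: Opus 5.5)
Your route is essentially the paper's: (1) by unfolding the definitions, (2) from (1) and bijectivity of $H_X$, (3) from (2), and (4) from the fact that $H_X$ respects specialization orders. Items (1)--(3) are correct. You also make explicit something the paper uses tacitly: $\{H_X[V] : V\in\mathcal{K}\}=\mathcal{K}_{\mathbf{S}(X)}$, so $Z\mapsto H_X[Z]$ is a bijection $\mathcal{Z}(X)\to\mathcal{Z}(\mathcal{X}(\mathbf{S}(X)))$. This is exactly what is needed for the definition of $R_T$ to make sense, and for the identity $R_{\Box_T}(H_X(x))=\{H_X[Z] : Z\in R_T(x)\}$ that drives your proof of (3).

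Item (4), however, contains a sign error that you then cover with a hedge. Your equivalence $x\sqsubseteq y \iff H_X(y)\subseteq H_X(x)$ is correct. From it one gets
\[
x\sqsupseteq y \iff y\sqsubseteq x \iff H_X(x)\subseteq H_X(y),
\]
not $H_X(x)\supseteq H_X(y)$ as you wrote.

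Taken as written, your computation makes $H_X$ an \emph{anti}-isomorphism from $\langle X,\sqsupseteq\rangle$ onto $\langle\mathcal{X}(\mathbf{S}(X)),\subseteq\rangle$. Then $H_X[(x]]$ would be $\{Q : H_X(x)\subseteq Q\}$ rather than $(H_X(x)]=\{Q : Q\subseteq H_X(x)\}$, so the ``in particular'' clause would not follow. The phrase ``with the orientation fixed by the paper's conventions'' hides this rather than resolving it.

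With the correct sign, $H_X\colon\langle X,\sqsupseteq\rangle\to\langle\mathcal{X}(\mathbf{S}(X)),\subseteq\rangle$ is directly an order isomorphism. This matches the paper's justification: a homeomorphism preserves specialization orders, and on $\mathcal{X}(\mathbf{S}(X))$ the dual specialization order is $\subseteq$. The identity $H_X[(x]]=(H_X(x)]$ is then immediate. The fix is one line, not a missing idea, but it is precisely the bookkeeping point you flagged as the main obstacle.
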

\begin{proof}
Notice that (1) is immediate from the definitions of $H_X$ and $\beta_{\mathbf{S}(X)}$. Condition (2) follows from (1) and the bijectivity of $H_X$. Condition (3) follows from (2). Finally, (4) follows from the fact that $H_X$ is a homeomorphism and that inclusion in $\mathcal{X}(\mathbf{S}(X))$ coincides with the dual of its specialization order.
\end{proof}

\begin{theorem}\label{theo: MoS-spaces to Normal spaces}
If $\langle X, \mathcal{K}, T \rangle$ is a MoS-space, then, $\langle X, \mathcal{K},  R_T\rangle$ is a normal space. Moreover, \begin{equation}
    (x,Z) \in R_T \iff T(x) \cap Z \neq \emptyset,
\end{equation}
for all $x\in X$ and $Z\in \mathcal{Z}(X)$.
\end{theorem}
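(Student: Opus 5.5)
The plan has three parts. First transport the known normal structure on $\mathcal{X}(\mathbf{S}(X))$ back along $H_X$ to get that $\langle X,\mathcal{K},R_T\rangle$ is a normal mS-space. Then prove the explicit description $(x,Z)\in R_T \iff T(x)\cap Z\neq\emptyset$ by unfolding \eqref{def: def R_m}.

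\textbf{Step 1: $R_T$ is a normal mS-space structure.}
\begin{itemize}
\item By Theorem~\ref{dualidad RS espaces}, $\langle \mathbf{S}(X),\Box_T\rangle$ is a modal semilattice. Hence Lemma~\ref{dual normal} makes $\langle \mathcal{X}(\mathbf{S}(X)),\mathcal{K}_{\mathbf{S}(X)},R_{\Box_T}\rangle$ a normal mS-space.
\item $H_X$ is a homeomorphism. It maps $\mathcal{K}$ onto $\mathcal{K}_{\mathbf{S}(X)}$ and $S(X)$ onto $S(\mathcal{X}(\mathbf{S}(X)))$, by Lemma~\ref{lem: cosas de H_X}(1). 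So it carries dually directed subfamilies of $\mathcal{K}$ to dually directed subfamilies of $\mathcal{K}_{\mathbf{S}(X)}$, and therefore induces a bijection $Z\mapsto H_X[Z]$ from $\mathcal{Z}(X)$ onto $\mathcal{Z}(\mathcal{X}(\mathbf{S}(X)))$.
\item Definition \eqref{T sombrero} therefore makes $H_X$ an isomorphism of multirelational structures. Conditions (m1) and (m2) transfer by Lemma~\ref{lem: cosas de H_X}(2),(3).
\item (N1) transfers using Lemma~\ref{lem: cosas de H_X}(4), since $H_X((z])=(H_X(z)]$. (N2) transfers because $H_X[\emptyset]=\emptyset$.
\end{itemize}

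\textbf{Step 2: the explicit description of $R_T$.} Fix $x$ and $Z$, and write $P=H_X(x)$. By \eqref{def: def R_m}, $(x,Z)\in R_T$ iff there is no $U\in S(X)$ with both $x\in\Box_T(U)$ and $\beta_{\mathbf{S}(X)}(U)\cap H_X[Z]=\emptyset$. Using Lemma~\ref{lem: cosas de H_X}(1) and injectivity of $H_X$, the second condition reads $U\cap Z=\emptyset$. So
\[
(x,Z)\in R_T \iff \text{for all } U\in S(X),\ \bigl(T(x)\subseteq U \Rightarrow U\cap Z\neq\emptyset\bigr).
\]

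\textbf{Step 3: comparing with $T(x)\cap Z\neq\emptyset$.}
\begin{itemize}
\item If $T(x)\cap Z\neq\emptyset$, the right-hand condition above holds trivially.
\item For the converse, assume $T(x)\cap Z=\emptyset$. By condition (2) of meet-relations, $Y:=T(x)=\bigcap\{U\in S(X): T(x)\subseteq U\}$ belongs to $\mathcal{C}_{\mathcal{K}}(X)$.
\item Write $Z=\bigcap\mathcal{L}$ with $\mathcal{L}\subseteq\mathcal{K}$ dually directed. Remark~\ref{rem: Remark1 CMZ} gives $W\in\mathcal{L}$ with $Y\cap W=\emptyset$.
\item Then $T(x)\subseteq W^c$, and $W^c\in S(X)$. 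Also $Z\subseteq W$, so $W^c\cap Z=\emptyset$. Taking $U=W^c$ shows $(x,Z)\notin R_T$.
\end{itemize}

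\textbf{Main obstacle.} I expect the main difficulty to be this converse direction: separating the closed set $T(x)$ from the subbasic saturated set $Z$ by a single subbasic closed set. It rests on Remark~\ref{rem: Remark1 CMZ}, i.e.\ on (S4), together with meet-relation condition (2), which guarantees that $T(x)$ is closed. A secondary point needing care is that $Z\mapsto H_X[Z]$ really is a bijection between the two families of subbasic saturated sets, so that \eqref{T sombrero} is well defined on all of $\mathcal{Z}(X)$.
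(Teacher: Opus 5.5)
Your proposal is correct and follows essentially the same route as the paper. Conditions (m1), (m2) and normality are transported from the canonical space $\langle \mathcal{X}(\mathbf{S}(X)),\mathcal{K}_{\mathbf{S}(X)},R_{\Box_T}\rangle$ along $H_X$ via Lemma~\ref{lem: cosas de H_X}. The characterization of $R_T$ then comes from unfolding \eqref{def: def R_m} and separating the closed set $T(x)$ from $Z$ by a single member of $\mathcal{L}$, using Remark~\ref{rem: Remark1 CMZ}. The differences are cosmetic: you argue (N1) and the converse directly rather than by contradiction, and you make explicit that $Z\mapsto H_X[Z]$ is a bijection between the families of subbasic saturated sets, which the paper leaves implicit.
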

\begin{proof}
    Since $\langle \mathcal{X}(\mathbf{S}(X)), \mathcal{K}_{\mathbf{S}(X)}, R_{\Box_T}\rangle$ is a mS-space, by Lemma \ref{lem: cosas de H_X}(2) and (3), it is not hard to see that $\langle X, \mathcal{K}, R_T\rangle$ satisfies conditions (m1) and (m2), and hence it is a mS-space. Now, since $\langle \mathbf{S}(X),\Box_T\rangle$ is a modal semilattice, by Lemma \ref{dual normal} we obtain that $\langle \mathcal{X}(\mathbf{S}(X)), \mathcal{K}_{\mathbf{S}(X)}, R_{\Box_T}\rangle$ is a normal mS-space. Next, we prove that $\langle X, \mathcal{K}, R_T\rangle$ satisfies (N1). To do so, suppose by contradiction that there exist $x\in X$ and $Z\in R_{T}(x)$ such that $(z]\notin R_T(x)$ for every $z\in Z$. Then, without risk of confusion, we may assume that $H_X(x)\in \mathcal{X}(\mathbf{S}(X))$ and $H_X[Z]\in \mathcal{Z}(\mathcal{X}(\mathbf{S}(X)))$ satisfy $H_X((z])\notin R_{\Box_T}(H_X(x))$. By Lemma \ref{lem: cosas de H_X}(4), this yields a contradiction. The proof of (N2) is analogous. Therefore, $\langle X, \mathcal{K}, R_T\rangle$ is a normal space, as claimed.
    \\
    For the moreover part, let $(x,Z)\in R_T$, then by \eqref{T sombrero}, ${\Box_T}^{-1}[(H_X(x)]\subseteq {I_{\mathbf{S}(X)}(H_X[Z])}^{c}$. Notice that from Lemma \ref{lem: cosas de H_X}(2), for all $U\in S(X)$, it is the case that if $T(x)\subseteq U$ then, $U\cap Z\neq \emptyset$. Suppose that $T(x)\cap Z=\emptyset$. Since $Z \in \mathcal{Z}(X)$, there exists a dually directed family $\mathcal{L}\subseteq\mathcal{K}$ such that  $Z=\bigcap\mathcal{L}$. Since $T$ is meet-relation, $T(x)\in \mathcal{C}_{\mathcal{K}}(X)$. So, by Remark \ref{rem: Remark1 CMZ} there
    exists $V\in \mathcal{L}$ such that $T(x)\cap V=\emptyset$. Then, $V^{c}\cap Z\not=\emptyset$. Thus, $V^c\nsubseteq Z^{c}=\bigcup \{V^c\colon V\in \mathcal{L}\}$, which is absurd. So it must be $T(x)\cap Z \neq \emptyset$. Conversely, let $x \in X$ and $Z \in \mathcal{Z}(X)$ be such that $T(x)\cap Z\neq\emptyset$. Let $U \in S(X)$ be such that $U\in {\Box_T}^{-1}[(H_X(x)]$. Then, $T(x)\subseteq U$, and by hypothesis, we obtain $U \cap Z\neq \emptyset$. Therefore, by Lemma \ref{lem: cosas de H_X}(2), it follows that $\beta_{S(X)}(U) \cap H_{X}[Z]\neq \emptyset$. Thus, $U \in {I_{S(X)}(H_X[Z])}^{c}$, and consequently $(H_X(x),H_X[Z]) \in R_{\Box_T}$, which implies $(x,Z) \in R_T$. This concludes the proof.
\end{proof}

We now prove the converse direction, namely that normal mS-spaces give rise to modal semilattices, thus completing the correspondence between both structures. To this end, we need a technical result first.

\begin{lemma}\label{lem: normal to modal}
If $\langle X, \mathcal{K}, R \rangle$ is a normal mS-space, then $\langle \mathbf{S}(X), m_R\rangle$ is a modal semilattice.   
\end{lemma}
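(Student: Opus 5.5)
The plan is to transfer the problem to the canonical algebra and then use Lemma~\ref{dual normal} in the direction ``normal $\Rightarrow$ modal''. By (m1), $m_R$ maps $S(X)$ into $S(X)$. Monotonicity is immediate: if $U\subseteq V$ then $L_U\subseteq L_V$, so $m_R(U)\subseteq m_R(V)$. Thus $\langle \mathbf{S}(X), m_R\rangle$ is a monotone semilattice, and Theorem~\ref{duality Msp mMS} identifies $\langle X,\mathcal{K},R\rangle$ with its canonical $mS$-space via $H_X$. Since the latter is normal whenever $X$ is, Lemma~\ref{dual normal} would finish the proof. For self-containment I would instead verify the two modal axioms directly, using normality in the same way as in the proof of Lemma~\ref{dual normal}.

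The first axiom is $m_R(X)=X$. For every $x$ and every $Z\in R(x)$, (N2) gives $Z\neq\emptyset$, so $Z\cap X\neq\emptyset$ and $Z\in L_X$. Hence $R(x)\subseteq L_X$ for every $x$, that is, $m_R(X)=X$.

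The second axiom is preservation of binary meets. By monotonicity $m_R(U\cap V)\subseteq m_R(U)\cap m_R(V)$, so only the reverse inclusion needs work. Let $x\in m_R(U)\cap m_R(V)$ and $Z\in R(x)$; we must show $Z\cap U\cap V\neq\emptyset$. By (N1) there is $z\in Z$ with $(z]\in R(x)$. Since $R(x)\subseteq L_U\cap L_V$, we get $(z]\cap U\neq\emptyset$ and $(z]\cap V\neq\emptyset$.

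The key observation is that $(z]=\{y: y\sqsupseteq z\}$ is the set of $y$ with $z\in\mathrm{cl}_{\mathcal{K}}(y)$. Every $U\in S(X)$ is closed, hence a $\sqsupseteq$-upset (i.e. $\sqsubseteq$-downset), since $U$ closed and $y\in U$ give $\mathrm{cl}(y)\subseteq U$. So if $y\in(z]\cap U$, then $z\in \mathrm{cl}(y)\subseteq U$. Therefore $(z]\cap U\neq\emptyset$ iff $z\in U$. It follows that $z\in U\cap V$ and $z\in Z$, so $Z\in L_{U\cap V}$. Hence $R(x)\subseteq L_{U\cap V}$ and $x\in m_R(U\cap V)$.

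The main obstacle is getting the orientation of $\sqsupseteq$ and of $(z]$ right, so that closed sets interact with $(z]$ exactly as described. I would check this carefully against the convention $\uparrow x=\mathrm{cl}_{\mathcal{K}}(x)$ in $\langle X,\sqsupseteq\rangle$. Under that convention $(z]$ is the downset of $z$ in $\langle X,\sqsupseteq\rangle$, and $S(X)$ consists of upsets there, so $(z]\cap U\neq\emptyset$ iff $z\in U$, as required.
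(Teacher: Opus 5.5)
Your direct verification is correct and is essentially the paper's proof. You get $m_R(X)=X$ from (N2); the paper cites (N1), which also excludes $\emptyset\in R(x)$. You get one inclusion by monotonicity, and the reverse from (N1) together with the fact that members of $S(X)$ are $\sqsupseteq$-upsets, so that $z\in Z\cap U\cap V$. Reading off $Z\in L_{U\cap V}$ directly from $z\in Z$, rather than going through $(z]\subseteq Z$ as the paper does, is only a cosmetic shortcut.
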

\begin{proof}
In order to show that $m_R(X)=X$, observe that $L_X=\mathcal{Z}(X)-\{\emptyset\}$. Thus, by (N1), it follows that $R(x)\subseteq L_X$ for every $x\in X$. On the other hand, since $\langle X, \mathcal{K}, R \rangle$ is, in particular, a monotone space, Theorem \ref{duality Msp mMS} implies that $m_R$ is a monotone operator on $\mathbf{S}(X)$. Hence, for every $U,V\in S(X)$,
\[
m_R(U\cap V)\subseteq m_R(U)\cap m_R(V).
\]
We prove the reverse inclusion. Let $x\in m_R(U)\cap m_R(V)$. Then, for every $Z\in R(x)$, we have $Z\subseteq L_U$ and $Z\subseteq L_V$. By (N1), there exists $z\in Z$ such that $(z]\subseteq L_U$ and $(z]\subseteq L_V$. Hence, there exist $t_U\in U$ and $t_V\in V$ such that $t_U\sqsupseteq z$ and $t_V\sqsupseteq z$. Therefore, $z\in U\cap V$, which implies that $(z]\in L_{U\cap V}$. Moreover, by Proposition \ref{decreciente interseccion de abiertos}, we have $(z]\subseteq Z$. From this it readily follows that $Z\in L_{U\cap V}$. Consequently, $R(x)\subseteq L_{U\cap V}$, and thus
\[
m_R(U)\cap m_R(V)\subseteq m_R(U\cap V).
\]
Therefore, $m_R$ is modal, as desired.
\end{proof}

Let $\langle X, \mathcal{K}, R \rangle$ be a normal mS-space. By Lemma \ref{lem: normal to modal} and Theorem \ref{dualidad RS espaces}, we obtain that $\langle \mathcal{X}(\mathbf{S}(X)), \mathcal{K}_{\mathbf{S}(X)}, N_{m_R}\rangle$ is a MoS-space.  
Now, we define a relation $T_R \subseteq X \times X$ by

\begin{equation}\label{meet asociada a multi}
(x, y) \in T_R \iff (H_X(x),H_X (y)) \in N_{m_R}.
\end{equation}

\begin{lemma}\label{lem: otras cosas de H_X}
Let $\langle X, \mathcal{K}, R \rangle$ be a normal mS-space. Then, for every $U\in S(X)$,
\[\Box_{N_{m_R}}(\beta_{\mathbf{S}(X)}(U))= H_X[\Box_{T_R}(U)].
\]
\end{lemma}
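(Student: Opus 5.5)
The plan is a direct element-chasing argument using the fact that $H_X$ is a bijection. Fix $U\in S(X)$ and an arbitrary $x\in X$; since $H_X$ is onto $\mathcal{X}(\mathbf{S}(X))$, every point of the left-hand side is of the form $H_X(x)$. So it suffices to prove
\[
H_X(x)\in \Box_{N_{m_R}}(\beta_{\mathbf{S}(X)}(U)) \iff x\in \Box_{T_R}(U).
\]
Injectivity of $H_X$ then gives $H_X(x)\in H_X[\Box_{T_R}(U)]$ exactly when $x\in\Box_{T_R}(U)$, which yields the stated equality of sets.

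First unfold the left-hand side. By definition of $\Box$, $H_X(x)\in \Box_{N_{m_R}}(\beta_{\mathbf{S}(X)}(U))$ means that $N_{m_R}(H_X(x))\subseteq \beta_{\mathbf{S}(X)}(U)$. Every element of $\mathcal{X}(\mathbf{S}(X))$ is $H_X(y)$ for a unique $y\in X$. By Lemma~\ref{lem: cosas de H_X}(1), $H_X(y)\in\beta_{\mathbf{S}(X)}(U)=H_X[U]$ iff $y\in U$. So the condition reads: for every $y\in X$ with $(H_X(x),H_X(y))\in N_{m_R}$, we have $y\in U$.

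Next unfold the right-hand side. By \eqref{meet asociada a multi}, $(H_X(x),H_X(y))\in N_{m_R}$ is precisely $(x,y)\in T_R$. Hence the condition above says $T_R(x)\subseteq U$, i.e. $x\in\Box_{T_R}(U)$, which closes the chain of equivalences.

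The only delicate point is the identification $\beta_{\mathbf{S}(X)}(U)=H_X[U]$ together with surjectivity of $H_X$. Both are available: the first from Lemma~\ref{lem: cosas de H_X}(1), the second from $H_X$ being a homeomorphism \cite{CG}. So no genuine obstacle is expected. The lemma is essentially a transport of structure along $H_X$, and neither normality nor Lemma~\ref{lem: normal to modal} is needed beyond guaranteeing that $N_{m_R}$ is defined.

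As a cross-check, one could also use Proposition~3.20 of \cite{CG}, which gives $\Box_{N_{m_R}}(\beta_{\mathbf{S}(X)}(U))=\beta_{\mathbf{S}(X)}(m_R(U))=H_X[m_R(U)]$. This shows in addition that $\Box_{T_R}=m_R$ on $S(X)$, although that is not required here.
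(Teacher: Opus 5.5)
Your proposal is correct and follows essentially the same route as the paper: you unfold $\Box_{N_{m_R}}$, then use the bijectivity of $H_X$, Lemma~\ref{lem: cosas de H_X}(1) and the defining equivalence \eqref{meet asociada a multi} to turn $N_{m_R}(H_X(x))\subseteq\beta_{\mathbf{S}(X)}(U)$ into $T_R(x)\subseteq U$. Your closing cross-check via Proposition~3.20 of \cite{CG} is also sound, since it uses that $m_R$ is a homomorphism (which is where normality enters), and it anticipates the identity $m_R=\Box_{T_R}$ of Proposition~\ref{R=R_T_R}.
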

\begin{proof}
Let $H_X(x)\in \Box_{N_{m_R}}(\beta_{\mathbf{S}(X)}(U))$. Then, $N_{m_R}(H_X(x))\subseteq \beta_{\mathbf{S}(X)}(U)$. Thus, for every $H_X(y)\in \mathcal{X}(\mathbf{S}(X))$, if $(H_X(x), H_X(y))\in N_{m_R}$, we have $H_X(y)\in \beta_{\mathbf{S}(X)}(U)$. By the bijectivity of $H_X$, together with Lemma \ref{lem: cosas de H_X}(1) and \eqref{meet asociada a multi}, this is equivalent to saying that $T_R(x)\subseteq U$. Hence, $x\in \Box_{T_R}(U)$. The converse is analogous.
\end{proof}

\begin{theorem}\label{binaria asociada a normal}
If $\langle X, \mathcal{K}, R \rangle$ is a normal mS-space, then  $\langle X, \mathcal{K},T_R\rangle$ is a MoS-space. Furthermore, 
\begin{equation}
(x,y) \in T_R \iff (x,(y]) \in R,
\end{equation}
for all $x,y\in X$.
\end{theorem}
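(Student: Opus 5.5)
The plan has two parts. First I show that $T_R$ is a meet-relation on $\langle X,\mathcal{K}\rangle$ by transport along $H_X$. Then I prove the characterization by unfolding the definitions of $N_{m_R}$ and $R$.

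For the first part, note that by Lemma \ref{lem: normal to modal} and Remark \ref{definicion D_m}, $N_{m_R}$ is a meet-relation on the S-space $\mathcal{X}(\mathbf{S}(X))$. I verify the two meet-relation conditions for $T_R$ in turn.
\begin{itemize}
\item Condition (1): for $U\in S(X)$, Lemma \ref{lem: otras cosas de H_X} and Lemma \ref{lem: cosas de H_X}(1) give $H_X[\Box_{T_R}(U)]=\Box_{N_{m_R}}(\beta_{\mathbf{S}(X)}(U))=\beta_{\mathbf{S}(X)}(m_R(U))=H_X[m_R(U)]$, using Proposition~3.20 of \cite{CG}. Since $H_X$ is bijective, $\Box_{T_R}(U)=m_R(U)\in S(X)$.
\item Condition (2): by \eqref{meet asociada a multi}, $H_X[T_R(x)]=N_{m_R}(H_X(x))$. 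The right-hand side is an intersection of sets $\beta_{\mathbf{S}(X)}(U)=H_X[U]$. Because $H_X$ is a bijection carrying $S(X)$ onto $S(\mathcal{X}(\mathbf{S}(X)))$ and commuting with intersections, $T_R(x)=\bigcap\{U\in S(X):T_R(x)\subseteq U\}$.
\end{itemize}
Hence $\langle X,\mathcal{K},T_R\rangle$ is a MoS-space.

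For the characterization, $(x,y)\in T_R$ means $m_R^{-1}[H_X(x)]\subseteq H_X(y)$. Unfolding, this says: for every $U\in S(X)$, if $x\in m_R(U)$ then $y\in U$.

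\emph{Direction $(\Leftarrow)$.} Assume $(x,(y])\in R$ and $x\in m_R(U)$. Then $R(x)\subseteq L_U$, so $(y]\cap U\neq\emptyset$. Pick $t\in U$ with $t\sqsupseteq y$, i.e.\ $t\in\mathrm{cl}_{\mathcal{K}}(y)$. Since $U$ is closed, it follows that $y\in U$.

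\emph{Direction $(\Rightarrow)$.} Assume $(x,y)\in T_R$. By (m2),
\[
R(x)=\bigcap\{L_U: U\in S(X),\ x\in m_R(U)\}.
\]
So it suffices to show $(y]\in L_U$ whenever $x\in m_R(U)$. This holds because $y\in U$ and $y\in (y]$. Note that $(y]\in\mathcal{Z}(X)$ by Proposition \ref{decreciente interseccion de abiertos}.

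The main obstacle I expect is the transport step in the first part: justifying carefully that $\Box_{N_{m_R}}\circ\beta_{\mathbf{S}(X)}=\beta_{\mathbf{S}(X)}\circ m_R$, and that $H_X$ identifies the subbasic closed sets on both sides. Once this is settled, the characterization itself is only an unfolding of (m2) and of the closedness of subbasic sets.
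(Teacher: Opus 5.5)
Your proposal is correct and follows essentially the paper's route. You transport both meet-relation conditions for $T_R$ along $H_X$ via Lemma \ref{lem: otras cosas de H_X}, which even yields $\Box_{T_R}=m_R$ directly. You then obtain the characterization by unfolding $(x,y)\in T_R$ as ``$x\in m_R(U)\Rightarrow y\in U$ for all $U\in S(X)$'' together with (m2), arguing directly where the paper argues by contradiction.

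One slip needs fixing in the $(\Leftarrow)$ direction: $t\sqsupseteq y$ means $y\sqsubseteq t$, i.e.\ $y\in\mathrm{cl}_{\mathcal{K}}(t)$, not $t\in\mathrm{cl}_{\mathcal{K}}(y)$. It is this fact, together with $t\in U$ and $U$ closed, that gives $y\in U$.
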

\begin{proof}
We prove that $T_R$ is a meet-relation. To this end, since $\langle \mathcal{X}(\mathbf{S}(X)), \mathcal{K}_{\mathbf{S}(X)}, N_{m_R}\rangle$ is a MoS-space, $N_{m_R}$ is a meet-relation. Thus: (a) for all $U\in S(X)$, there exists $V\in S(X)$ such that $\Box_{N_{m_R}}(\beta_{\mathbf{S}(X)}(U))=\beta_{\mathbf{S}(X)}(V)$; and (b) for every $x\in X$,
\[
N_{m_R}(H_X(x))=\bigcap\{\beta_{\mathbf{S}(X)}(U)\colon H_X(x)\in \Box_{N_{m_R}}(\beta_{\mathbf{S}(X)}(U))\}.
\]

By (a) and Lemma \ref{lem: otras cosas de H_X},
\[
H_X[\Box_{T_R}(U)]
=
\beta_{\mathbf{S}(X)}(V)
\]
for some $V\in S(X)$. Since $\beta_{\mathbf{S}(X)}$ is injective, it follows that
\[
\Box_{T_R}(U)=V\in S(X).
\] To prove the remaining condition, we proceed by contradiction. Assume that there exists
\[
y\in \bigcap\{U\in S(X)\colon x\in \Box_{T_R}(U)\}
\]
such that $y\notin T_R(x)$. Then, by \eqref{meet asociada a multi}, we obtain
\[
H_X(y)\notin N_{m_R}(H_X(x)).
\]
Hence, by (b), there exists $U\in S(X)$ such that
\[
H_X(x)\in \Box_{N_{m_R}}(\beta_{\mathbf{S}(X)}(U))
\]
and
\[
H_X(y)\notin \beta_{\mathbf{S}(X)}(U).
\]
Therefore, by Lemma \ref{lem: otras cosas de H_X} and Lemma \ref{lem: cosas de H_X}(1), we get $x\in \Box_{T_R}(U)$ but $y\notin U,$ a contradiction. This shows that $T_R$ is a meet-relation.
\\
For the last part, let $x,y\in X$ be such that $(x,y)\in T_R$. Then, by $(5)$, we have
\[
m_R^{-1}[H_X(x)]\subseteq H_X(y).
\]

Suppose, for contradiction, that $(y]\notin R(x)$. Since $\langle X, \mathcal{K}, R \rangle$ is a normal mS-space, by (m2) there exists
$U\in S(X)$ such that $x\in m_R(U)$ and $(y]\cap U=\emptyset$. So, in particular, $y\notin U$. Therefore, by the previous inclusion, we get that $m_R(U)\in H_X(x)$, so, by Lemma \ref{lem: cosas de H_X}(1), $y\in U$, a contradiction.

On the other hand, assume that $(x,(y])\in R$ and let $U\in S(X)$ be such that $
U\in m_R^{-1}[H_X(x)]$. Then, $x\in m_R(U)$. Thus by (m2) $(y]\in L_U $. Hence,
$(y]\cap U\neq\emptyset $. Suppose that $y\notin U$. Then $(y]\cap U=\emptyset$, a contradiction. Hence,
$y\in U$, and consequently $
U\in H_X(y)$.

\end{proof}

We are now in a position to close the loop of the constructions developed in this subsection. The next results show that the passage from MoS-spaces to normal $mS$-spaces and back recovers both the binary relation and the induced modal operator. This shows that the two translations are mutually inverse, completing the correspondence between the relational and multirelational structures.

\begin{lemma}\label{T=T_R_T}    
Let $\langle X, \mathcal{K}, T \rangle$ be a MoS-space. Then the following hold:
\begin{enumerate}
    \item $T = T_{R_T}$,
    \item $m_{R_T} = \Box_{T}$.
\end{enumerate}
\end{lemma}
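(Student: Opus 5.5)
For (1), we will combine the two explicit descriptions already available. Fix $x,y\in X$. By Theorem~\ref{theo: MoS-spaces to Normal spaces}, $\langle X,\mathcal{K},R_T\rangle$ is a normal mS-space, so Theorem~\ref{binaria asociada a normal} applies with $R=R_T$ and gives $(x,y)\in T_{R_T}$ iff $(x,(y])\in R_T$. By Proposition~\ref{decreciente interseccion de abiertos}, $(y]\in\mathcal{Z}(X)$, so the moreover part of Theorem~\ref{theo: MoS-spaces to Normal spaces} gives $(x,(y])\in R_T$ iff $T(x)\cap (y]\neq\emptyset$. It therefore remains to prove
\[
T(x)\cap (y]\neq\emptyset \iff y\in T(x).
\]
The direction from right to left is immediate, since $y\in(y]$. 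For the other direction, take $z\in T(x)$ with $z\sqsupseteq y$, that is, $y\in \mathrm{cl}_{\mathcal{K}}(z)$. Because $T$ is a meet-relation, $T(x)=\bigcap\{U\in S(X): T(x)\subseteq U\}$ is closed. Every $U\in S(X)$ containing $T(x)$ contains $z$, and since $U$ is closed it also contains $\mathrm{cl}_{\mathcal{K}}(z)\ni y$. Hence $y\in T(x)$. This down-closure of $T(x)$ under $\sqsupseteq$ is the only point needing care, and it follows directly from condition (2) in the definition of meet-relation.

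For (2), fix $U\in S(X)$ and $x\in X$. We have $x\in m_{R_T}(U)$ iff $R_T(x)\subseteq L_U$. By Theorem~\ref{theo: MoS-spaces to Normal spaces}, this says that for every $Z\in\mathcal{Z}(X)$ with $T(x)\cap Z\neq\emptyset$ we also have $Z\cap U\neq\emptyset$. We show this is equivalent to $T(x)\subseteq U$.

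Suppose first that $T(x)\subseteq U$. If $Z$ meets $T(x)$, then it meets $U$.

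Conversely, suppose $x\in m_{R_T}(U)$ and let $y\in T(x)$. Then $(y]\in\mathcal{Z}(X)$ meets $T(x)$ at $y$, so $(y]\cap U\neq\emptyset$. Hence there is $t\in U$ with $t\sqsupseteq y$, that is, $y\in\mathrm{cl}_{\mathcal{K}}(t)\subseteq U$, because $U$ is closed. Thus $T(x)\subseteq U$, i.e.\ $x\in\Box_T(U)$.

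The same closure-under-specialization fact drives both parts, so we do not expect a serious obstacle. The only care needed is the direction of $\sqsupseteq$: $(y]=\{t: t\sqsupseteq y\}$ consists of points whose closure contains $y$. Alternatively, (2) can be obtained from (1) via Lemma~\ref{lem: otras cosas de H_X}, but the direct argument above is cleaner.
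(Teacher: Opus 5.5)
Your proof is correct and essentially matches the paper's. Part (2) is the paper's two-inclusion argument; you additionally spell out the step the paper calls ``not hard to see'', namely that $(y]\cap U\neq\emptyset$ forces $y\in U$ because $U\in S(X)$ contains $\mathrm{cl}_{\mathcal{K}}(t)$ for each of its points $t$. Part (1) uses the same reduction, via Theorem~\ref{binaria asociada a normal}, to deciding whether $(x,(y])\in R_T$.

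The only variation is how you decide that membership. You use the intrinsic description $T(x)\cap(y]\neq\emptyset$ from Theorem~\ref{theo: MoS-spaces to Normal spaces}, together with the fact that the closed set $T(x)$ is downward closed in the specialization order. The paper instead unfolds \eqref{T sombrero} with Remark~\ref{ideal decreciente} to get $\Box_T^{-1}[H_X(x)]\subseteq H_X(y)$, and then applies the meet-relation condition directly. That route avoids the (S4)-dependent direction of the characterization but is otherwise equivalent.
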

\begin{proof} 
(1) Let $(x,y) \in T$. Since $T$ is a meet-relation, we have that $y \in \bigcap\{U \in S(X)\colon T(x)\subseteq U\}$. To show that $(x,y)\in T_{R_T}$, we will prove that ${\Box_T}^{-1}(H_X(x))\cap {I_{\mathbf{S}(X)}(H_X[(y]])}=\emptyset$. Let $U \in S(X)$ be such that $\Box_T(U) \in H_X(x)$, so we have that $x \in \Box_T(U)$. By definition, we obtain $T(x)\subseteq U$, and by hypothesis we can write $y \in U$. Moreover, since $y \in (y]$, we get $U \cap (y]\neq\emptyset$. Then, by Lemma \ref{lem: cosas de H_X}(1) and (4), we have that $H_X[(y]] \cap \beta_{\mathbf{S}(X)}(U)\neq \emptyset$. Thus, $U \notin {I_{\mathbf{S}(X)}(H_X[(y]])}$. Then, from \eqref{T sombrero} we get $(x,(y]) \in R_T$, so $(x,y) \in T_{R_T}$. 

On the other hand, consider $(x,y) \in T_{R_T}$. Then, by Theorem \ref{binaria asociada a normal}, we have $(x,(y]) \in R_T$. Hence, by \eqref{T sombrero} and Remark \ref{ideal decreciente}, ${\Box_T}^{-1}(H_X(x))\subseteq H_X(y)$. It is not hard to see that, since $T$ is a meet-relation, the latter implies that $y\in T(x)$. Hence, $T=T_{R_T}$.

(2) Suppose that $x\in \Box_T(U)$. Then $T(x)\subseteq U$. To show that $x\in m_{R_T}(U)$, we need to prove that $R_T(x)\subseteq L_U$. Let $Z\in R_T(x)$. Then, by Theorem \ref{theo: MoS-spaces to Normal spaces}, $T(x)\cap Z\neq \emptyset$. Hence, there exists $y\in Z$ such that $y\in T(x)$. By hypothesis, we obtain $y\in U$. This shows that $Z\in L_U$. Therefore, $x\in m_{R_T}(U)$, as desired.

Conversely, let $x\in m_{R_T}(U)$. To prove that $x\in \Box_T(U)$, we show that $T(x)\subseteq U$. Indeed, let $y\in T(x)$. By Proposition \ref{decreciente interseccion de abiertos}, we have $T(x)\cap (y]\neq \emptyset$. Thus, by assumption, it follows that $(y]\cap U \neq \emptyset$. It is not hard to see that this implies $y\in U$. Therefore, $T(x)\subseteq U$, and consequently, $x\in \Box_T(U)$. This concludes the proof.
\end{proof}

\begin{proposition}\label{R=R_T_R}   
Let $\langle X, \mathcal{K}, R \rangle$ be a normal $mS$-space. Then $R=R_{T_R}$ and $m_{R}=\Box_{T_R}$.
\end{proposition}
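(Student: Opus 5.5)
The plan is to reduce both equalities to the explicit descriptions of $R_T$ and $T_R$ already obtained. By Theorem~\ref{binaria asociada a normal}, $\langle X,\mathcal{K},T_R\rangle$ is a MoS-space, $T_R$ is a meet-relation, and
\[
(x,y)\in T_R \iff (x,(y])\in R .
\]
Applying Theorem~\ref{theo: MoS-spaces to Normal spaces} to $T_R$ then gives, for all $x\in X$ and $Z\in\mathcal{Z}(X)$,
\[
(x,Z)\in R_{T_R}\iff T_R(x)\cap Z\neq\emptyset \iff \exists\, y\in Z \text{ with } (y]\in R(x).
\]
So the first equality $R=R_{T_R}$ reduces to showing that, for every $x\in X$ and every $Z\in\mathcal{Z}(X)$,
\[
Z\in R(x)\iff \exists\, y\in Z \text{ such that } (y]\in R(x).
\]

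The forward implication is exactly condition (N1) of normality. For the backward implication I will use two facts.
\begin{enumerate}
\item If $y\in Z$ and $Z\in\mathcal{Z}(X)$, then $(y]\subseteq Z$. Write $Z=\bigcap\mathcal{L}$ with $\mathcal{L}\subseteq\mathcal{K}$ dually directed. Each $U\in\mathcal{L}$ is open and contains $y$, so it contains every $t\sqsupseteq y$; this is the same observation used in the proof of Lemma~\ref{lem: normal to modal}.
\item $R(x)$ is upward closed in $\langle\mathcal{Z}(X),\subseteq\rangle$. By (m2), $R(x)=\bigcap\{L_U : x\in m_R(U)\}$. Each $L_U=\{Z : Z\cap U\neq\emptyset\}$ is clearly closed under supersets, and an intersection of such families is again closed under supersets.
\end{enumerate}
With these, if $(y]\in R(x)$ and $y\in Z$, then $(y]\subseteq Z$ by (1), so $Z\in R(x)$ by (2). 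This proves $R=R_{T_R}$.

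For the second equality I will not compute directly. Instead I apply Lemma~\ref{T=T_R_T}(2) to the MoS-space $\langle X,\mathcal{K},T_R\rangle$, which gives $m_{R_{T_R}}=\Box_{T_R}$. Since $R=R_{T_R}$, this yields $m_R=m_{R_{T_R}}=\Box_{T_R}$.

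As a sanity check, the equality can also be verified by hand. Unfolding the definitions, $x\in m_R(U)$ means every $Z\in R(x)$ meets $U$, while $x\in\Box_{T_R}(U)$ means that every $y$ with $(y]\in R(x)$ lies in $U$. The two agree using (N1) together with the equivalence $(y]\cap U\neq\emptyset\iff y\in U$ for $U\in S(X)$. This equivalence holds because $U^c$ is open: if $y\notin U$, then every $t\sqsupseteq y$ also lies outside $U$.

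The main obstacle I expect is keeping the order conventions straight in fact (1). One must check that $(y]$, the set of points $\sqsupseteq y$, really lies inside every open set containing $y$ under the paper's specialization order. Everything else follows directly from (m2) and (N1).
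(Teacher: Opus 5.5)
Your proposal is correct and follows the paper's route; the paper's proof just cites Theorem~\ref{binaria asociada a normal} and (N1). Like it, you combine $(x,y)\in T_R\iff (x,(y])\in R$ with Theorem~\ref{theo: MoS-spaces to Normal spaces}, use (N1) for one inclusion, and get $m_R=\Box_{T_R}$ from Lemma~\ref{T=T_R_T}(2). Your justification of the reverse inclusion, via $(y]\subseteq Z$ for $y\in Z$ and the upward closure of $R(x)$ coming from (m2), supplies a step the paper leaves implicit.
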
 

\begin{proof}
This follows directly from Theorem \ref{binaria asociada a normal} and (N1).
\end{proof}

\subsection{The isomorphism}\label{the isomorphism of categories}

Before introducing the relational structure associated with left adjoints, let us explain the role played by the construction developed so far.  Given a RelS-space $\langle X,\mathcal{K},T\rangle$, Theorem \ref{theo: MoS-spaces to Normal spaces} shows that the induced multirelation $R_T$ completely determines the modal operator $\Box_T$, in the sense that $m_{R_T}=\Box_T$. Thus, the binary meet-relation $T$ can be faithfully encoded by means of a normal multirelation. However, this construction only captures the right adjoint component of the associated adjunction. In general, the relation $R_T$ does not contain enough information to recover the left adjoint $\Box_T^{*}$ directly.

In order to represent adjunctions relationally, we therefore need an additional structure capable of encoding the action of the left adjoint at the level of subbasic closed families. The notion of $\sigma$mS-space introduced below is designed precisely for this purpose.

\begin{definition}\label{definicion sigmaSP}
A subbasic-closed monotone S-space  ($\sigma$mS-space, for short) is a structure $\langle X, \mathcal{K}, G \rangle$, where $\langle X, \mathcal{K}\rangle$ is an \textit{S-space} and $G \subseteq X \times \mathcal{C}_{{\mathcal{K}}}(X)$ is a relation such that:

\begin{enumerate}
    \item $m_G(U) = \{ x \in X \colon  \exists Y \in G(x) \text{ such that } Y \subseteq U \} \in S(X)$ for all $U \in S(X)$;
    \item $G(x) = \bigcap\{ (D_U)^{c} \colon U \in S(X) \text{ and } x \in m_G(U)^{c} \}$, for all $x \in X$.
\end{enumerate}
\end{definition}

Observe that if $\langle X, \mathcal{K}, G \rangle$ is a $\sigma$mS-space, then by \eqref{conjunto D_U}, it follows that 
\[m_G(U) = \{ x \in X \colon G(x)\cap D_U\neq \emptyset\}\]
and 
\[
G(x) = \bigcap\{ (D_U)^{c} \colon U \in S(X) \text{ and } G(x)\subseteq (D_U)^{c} \}
\]

Further is true. From Definition \ref{definicion sigmaSP}(1), given a $\sigma$mS-space $\langle X, \mathcal{K}, G \rangle$, the structure $\langle \mathbf{S}(X), m_G \rangle$ is a monotone semilattice. This justifies the use of the term ``monotone'' in the definition of $\sigma$mS-spaces.

On the other hand, let $\langle X, \mathcal{K}\rangle$ be an S-space. We define the map 
\[
\Psi_X : \mathcal{P}(\mathcal{C}_{\mathcal{K}}(X)) \to \mathcal{P}(\mathcal{Z}(X))
\]
by
\begin{equation}\label{def. psi}
\Psi_X(\mathcal{D}) = \{ Z \in \mathcal{Z}(X) : \forall Y \in \mathcal{D},\; Y \cap Z \neq \emptyset \}.
\end{equation}

Let $\langle X, \mathcal{K}, G \rangle$ be a $\sigma$mS-space. We define the relation $R_G \subseteq X \times \mathcal{Z}(X)$ by
\[
(x,Z) \in R_G \quad \text{if and only if} \quad Z \in \Psi_X(G(x)).
\]

We stress that the proof of the following result can be carried out \emph{mutatis mutandis} from Proposition 29 of \cite{CM}. In the present setting, mS-spaces correspond to $\mathcal{S}$-monotonic DS-spaces, $\sigma$mS-spaces correspond to $\mathcal{C}$-monotonic DS-spaces, closed sets are replaced by subbasic closed sets, and saturated sets by subbasic saturated subsets. We omit the proof.

\begin{proposition}\label{R_G y G_R}
Let $\langle X, \mathcal{K}, G \rangle$ be a $\sigma$mS-space. Then $\langle X, \mathcal{K}, R_G\rangle$ is a mS-space and $m_G(U) = m_{R_G}(U)$ for every $U \in S(X)$.
\end{proposition}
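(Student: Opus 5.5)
We must show that $\langle X,\mathcal{K},R_G\rangle$ satisfies (m1) and (m2), where $R_G(x)=\Psi_X(G(x))$. The plan is to establish the identity $m_{R_G}(U)=m_G(U)$ first. Then (m1) follows at once from Definition~\ref{definicion sigmaSP}(1), and (m2) can be obtained by transporting Definition~\ref{definicion sigmaSP}(2) along $\Psi_X$.

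\emph{Step 1: the identity $m_{R_G}(U)=m_G(U)$.}
Fix $U\in S(X)$.

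For the inclusion $m_G(U)\subseteq m_{R_G}(U)$, take $x\in m_G(U)$. Then there is $Y\in G(x)$ with $Y\subseteq U$. Any $Z\in R_G(x)=\Psi_X(G(x))$ meets $Y$, hence meets $U$, so $Z\in L_U$. Thus $R_G(x)\subseteq L_U$.

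For the converse, suppose $x\notin m_G(U)$, so that no $Y\in G(x)$ is contained in $U$. We must produce $Z\in\mathcal{Z}(X)$ with $Z\cap U=\emptyset$ and $Z\cap Y\neq\emptyset$ for every $Y\in G(x)$.
\begin{itemize}
\item Write $U^c$ as a union of members of $\mathcal{K}$; it is itself a member of $\mathcal{K}$, since $U\in S(X)$.
\item The natural candidate is $Z=U^c$, which lies in $\mathcal{Z}(X)$ as a one-element dually directed family.
\item Each $Y\in G(x)$ satisfies $Y\not\subseteq U$, that is, $Y\cap U^c\neq\emptyset$.
\end{itemize}
So $U^c\in R_G(x)\setminus L_U$, and $x\notin m_{R_G}(U)$. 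This step is therefore direct, and gives (m1).

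\emph{Step 2: condition (m2).}
We need $R_G(x)=\bigcap\{L_U: x\in m_{R_G}(U)\}$.

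The inclusion $\subseteq$ is immediate from the definition of $m_{R_G}$.

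For $\supseteq$, let $Z\in\mathcal{Z}(X)$ lie in every $L_U$ with $x\in m_G(U)$, and suppose towards a contradiction that $Z\notin\Psi_X(G(x))$. Then some $Y\in G(x)\subseteq\mathcal{C}_{\mathcal{K}}(X)$ has $Y\cap Z=\emptyset$. Writing $Z=\bigcap\mathcal{L}$ with $\mathcal{L}\subseteq\mathcal{K}$ dually directed, Remark~\ref{rem: Remark1 CMZ} yields $V\in\mathcal{L}$ with $Y\cap V=\emptyset$. Put $U=V^c\in S(X)$. Then $Y\subseteq U$, so $x\in m_G(U)$. On the other hand $Z\subseteq V$, so $Z\cap U=\emptyset$, i.e.\ $Z\notin L_U$. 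This contradicts the choice of $Z$.

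\emph{Where the difficulty lies.}
The main technical point is the use of Remark~\ref{rem: Remark1 CMZ}: the compactness condition (S4) is what lets us separate a subbasic closed set from a subbasic saturated set by a single subbasic open. This is exactly the ingredient that replaces the compactness argument used in Proposition~29 of \cite{CM}.

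Condition (2) of Definition~\ref{definicion sigmaSP} is not actually needed for this direction. It would only matter for recovering $G$ from $R_G$, which is not part of the present statement.
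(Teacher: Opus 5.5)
Your proof is correct: Step 1 (using $U^c\in\mathcal{K}\subseteq\mathcal{Z}(X)$ as the witness) gives $m_{R_G}=m_G$ and hence (m1), and Step 2 correctly uses Remark~\ref{rem: Remark1 CMZ} to separate $Y$ from $Z$ by a single $V\in\mathcal{L}$, which yields (m2). The paper omits this proof and defers to Proposition 29 of \cite{CM} \emph{mutatis mutandis}; your argument is exactly that adaptation, with one caveat: your opening plan says (m2) comes from transporting condition (2) of Definition~\ref{definicion sigmaSP} along $\Psi_X$, but Step 2 never uses that condition, as your closing remark correctly notes.
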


We recall that by Proposition 1 of \cite{P W}, if $\mathbf{A}$ is a semilattice, then there is dual isomorphism between the posets $\langle \mathrm{Fi}(\mathbf{A}), \subseteq \rangle$ and $\langle \mathcal{C}_{\mathcal{K}_\mathbf{A}}(\mathcal{X}(\mathbf{A})),\subseteq \rangle$. More precisely, such an isomorphism is provided by the map
\[
\varphi_{\mathbf{A}} : \mathrm{Fi}(\mathbf{A})\to \mathcal{C}_{\mathcal{K}_\mathbf{A}}(\mathcal{X}(\mathbf{A}))
\]
defined as
\[
\varphi_{\mathbf{A}}(F)=\{\beta_{\mathbf{A}}(a):a\in F\}=\{P\in \mathcal{X}(\mathbf{A}):F\subseteq P\}.
\]
Whose inverse
\[
\psi_{\mathbf{A}}:\mathcal{C}_{\mathcal{K}_A}(\mathcal{X}(\mathbf{A}))\to \mathrm{Fi}(\mathbf{A})
\]
is given by
\[
\psi_{\mathbf{A}}(Y)=\{a\in A:Y\subseteq \beta_{\mathbf{A}}(a)\}.
\]

Whenever no confusion arises, we simply write $\psi$ and $\varphi$ instead of $\psi_{\mathbf{A}}$ and $\varphi_{\mathbf{A}}$, respectively.
\medskip

Given two semilattices $\mathbf{A}$ and $\mathbf{B}$ and a monotone function $f \colon A \rightarrow B$. From \cite{P W} we know that we can define a relation $G_f \subseteq \mathcal{X}(\mathbf{B}) \times \mathcal{C}_{\mathcal{K}_\mathbf{A}}(\mathcal{X}(\mathbf{A}))$ by 
\begin{equation}\label{G_f}
(P, Y) \in G_f \;\Longleftrightarrow\; \psi_{\mathbf{A}}(Y) \subseteq f^{-1}[P].
\end{equation}

\begin{proposition}\label{G_m es sigmaSP}
Let $\langle \mathbf{A}, m \rangle$ be a monotone semilattice. Then $\langle \mathcal{X}(\mathbf{A}), \mathcal{K}_\mathbf{A}, G_m \rangle$ is a $\sigma$mS-space.
\end{proposition}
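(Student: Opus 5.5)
We must check the two conditions of Definition \ref{definicion sigmaSP} for $G_m$ on $\langle \mathcal{X}(\mathbf{A}),\mathcal{K}_\mathbf{A}\rangle$. That $\langle \mathcal{X}(\mathbf{A}),\mathcal{K}_\mathbf{A}\rangle$ is an S-space is given by Theorem \ref{theo: dualidad semireticulos}, and $G_m\subseteq \mathcal{X}(\mathbf{A})\times\mathcal{C}_{\mathcal{K}_\mathbf{A}}(\mathcal{X}(\mathbf{A}))$ by \eqref{G_f}. Throughout, every element of $S(\mathcal{X}(\mathbf{A}))$ is written as $\beta(a)$, and every $Y\in\mathcal{C}_{\mathcal{K}_\mathbf{A}}(\mathcal{X}(\mathbf{A}))$ as $\varphi(F)$ with $F=\psi(Y)$. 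The basic translation is
\[
Y\subseteq \beta(a)\iff a\in\psi(Y),
\]
which is immediate from the definition of $\psi$.

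\textbf{Step 1: condition (1), $m_{G_m}(\beta(a))=\beta(m(a))$.} Let $P\in\beta(m(a))$. Take $Y=\varphi(\uparrow a)$, so that $\psi(Y)=\uparrow a$. Since $m$ is monotone and $m(a)\in P$, we get $\uparrow a\subseteq m^{-1}[P]$, hence $(P,Y)\in G_m$; also $Y\subseteq\beta(a)$. Conversely, suppose $(P,Y)\in G_m$ with $Y\subseteq\beta(a)$. Then $a\in\psi(Y)\subseteq m^{-1}[P]$, so $m(a)\in P$. Thus $m_{G_m}(\beta(a))=\beta(m(a))\in S(\mathcal{X}(\mathbf{A}))$.

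\textbf{Step 2: condition (2), the inclusion $\subseteq$.} Take $Y\in G_m(P)$ and $\beta(a)$ with $P\notin m_{G_m}(\beta(a))=\beta(m(a))$, i.e.\ $m(a)\notin P$. If $Y\subseteq\beta(a)$, then $a\in\psi(Y)\subseteq m^{-1}[P]$, so $m(a)\in P$, a contradiction. Hence $Y\in(D_{\beta(a)})^c$.

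\textbf{Step 3: condition (2), the inclusion $\supseteq$ (the main point).} Let $Y\in\mathcal{C}_{\mathcal{K}_\mathbf{A}}(\mathcal{X}(\mathbf{A}))$ satisfy $Y\not\subseteq\beta(a)$ whenever $m(a)\notin P$. We need $\psi(Y)\subseteq m^{-1}[P]$. Let $a\in\psi(Y)$, so $Y\subseteq\beta(a)$. If $m(a)\notin P$, the hypothesis would give $Y\not\subseteq\beta(a)$, a contradiction. Hence $m(a)\in P$.

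The only delicate points are the identifications $Y=\varphi(\psi(Y))$ and $S(\mathcal{X}(\mathbf{A}))=\beta[A]$, which come from Proposition 1 of \cite{P W} and Theorem \ref{theo: dualidad semireticulos}. Note that the argument never uses meet-preservation of $m$; monotonicity is needed only in Step 1.
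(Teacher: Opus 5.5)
Your proof is correct and follows essentially the same route as the paper. Both first establish $m_{G_m}(\beta(a))=\beta(m(a))$ via $\psi(\beta(a))={\uparrow} a$ (your $\varphi({\uparrow} a)$ is just $\beta(a)$), and then verify condition (2) by unwinding $\psi(Y)\subseteq m^{-1}[P]$. The only cosmetic difference is that you index condition (2) by $m(a)\notin P$, using Step 1, whereas the paper indexes it by $G_m(P)\cap D_{\beta(a)}=\emptyset$ and extracts a witness $C$; this amounts to the same thing.
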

\begin{proof}
We start by showing that for every $a\in A$, $m_{G_m}(\beta(a))=\beta(m(a))$. On the one hand, if $P\in \beta(m(a))$, then $\uparrow a\subseteq m^{-1}[P]$. Since $\uparrow a$ is a filter of $\mathbf{A}$, and moreover $\psi(\beta(a))=\uparrow a$, we conclude that $\beta(m(a))\subseteq m_{G_m}(\beta(a))$, so $P\in m_{G_m}(\beta(a))$. Conversely, if $P\in m_{G_m}(\beta(a))$, then, there exists $Y\in \mathcal{C}_{\mathcal{K}_A}(\mathcal{X}(\mathbf{A}))$ such that $\psi(Y)\subseteq m^{-1}[P]$ and $Y\subseteq \beta(a)$. Thus, \[\uparrow a=\psi(\beta(a))\subseteq \psi(Y)\subseteq m^{-1}[P]\]
this yields $m(a)\in P$, so $P\in \beta(m(a))$. Hence, $G_m$ satisfies Definition \ref{definicion sigmaSP}(1). Now we show that for all $P \in \mathcal{X}(\mathbf{A})$:
\[
G_m(P) = \bigcap\{ (D_{\beta(a)})^{c} \colon \beta(a) \in S(\mathcal{X}(\mathbf{A}))\text{ and } G_m(P)\cap D_{\beta(a)}=\emptyset\}=\bigcap \mathcal{S} 
\]
To this end, suppose that $Y\in \bigcap \mathcal{S}$. Then, for all $a\in A$ such that $G_m(P)\cap D_{\beta(a)}=\emptyset$ it follows that $Y\notin D_{\beta(a)}$. We will prove that $\psi(Y)\subseteq m^{-1}[P]$. Let $b\in \psi(Y)$. Then, since $Y\in D_{\beta(b)}$, from our assumption, $G_m(P)\cap D_{\beta(a)}$ must be non-empty. Thus, there exists $C\in \mathcal{C}_{\mathcal{K}_A}(\mathcal{X}(\mathbf{A}))$ such that $\psi(C)\subseteq m^{-1}[P]$ and $C\subseteq \beta(b)$. From this, it readily follows that $b\in m^{-1}[P]$. Consequently, $\bigcap \mathcal{S}\subseteq G_m(P)$. The remaining inclusion is straightforward. This shows that  $G_m$ satisfies Definition \ref{definicion sigmaSP}(1). Therefore, $\langle \mathcal{X}(\mathbf{A}), \mathcal{K}_\mathbf{A}, G_m \rangle$ is a $\sigma$mS-space, as claimed.
\end{proof}

The previous results show that $\sigma$mS-spaces provide the appropriate relational environment for representing monotone operators through families of subbasic closed sets.

\medskip

We now proceed to establish the isomorphism. We start with a result that will be employed in what follows.

\begin{lemma}\label{adjuncion imagen inversa}
    Let $\langle \mathbf{A},i,d \rangle$ an SLata. If $\mathbf{A}^{+}$ denotes the semilattice $\langle \mathrm{Up}(\mathbf{A}), \cap, A\rangle$, then $\langle\mathbf{A}^{+},d^{-1},i^{-1}\rangle$ is an SLata. I.e. for all $X,Y \in \mathrm{Up}(\mathbf{A})$ the following holds:
    \begin{center}
        $d^{-1}[X]\subseteq Y \iff X\subseteq i^{-1}[Y].$
    \end{center}
\end{lemma}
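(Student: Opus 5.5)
We first check that $d^{-1}$ and $i^{-1}$ are well-defined operations on $\mathrm{Up}(\mathbf{A})$. Since $i\dashv d$, both $i$ and $d$ are monotone. The inverse image of an upset under a monotone map is an upset: if $a\in d^{-1}[X]$ and $a\leq b$, then $d(a)\leq d(b)$, so $d(b)\in X$. The same argument works for $i$. We already know that $\mathbf{A}^{+}=\langle \mathrm{Up}(\mathbf{A}),\cap,A\rangle$ is a semilattice, and its order is inclusion. Hence it only remains to prove the displayed equivalence for all $X,Y\in\mathrm{Up}(\mathbf{A})$, which is exactly $d^{-1}\dashv i^{-1}$ on $\mathbf{A}^{+}$.

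For the direction $(\Rightarrow)$, assume $d^{-1}[X]\subseteq Y$ and let $a\in X$. We must show $i(a)\in Y$, so it suffices to show $i(a)\in d^{-1}[X]$, that is, $d(i(a))\in X$. The unit of the adjunction gives $a\leq d(i(a))$: this follows from $i(a)\leq i(a)$ via $i\dashv d$. Since $X$ is an upset containing $a$, we get $d(i(a))\in X$.

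For the direction $(\Leftarrow)$, assume $X\subseteq i^{-1}[Y]$ and let $a\in d^{-1}[X]$, so that $d(a)\in X$. The hypothesis then gives $i(d(a))\in Y$. The counit gives $i(d(a))\leq a$, from $d(a)\leq d(a)$ via $i\dashv d$. Since $Y$ is an upset, $a\in Y$.

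No step is a genuine obstacle. The only point needing care is the orientation: on $\mathbf{A}^{+}$ the left adjoint is $d^{-1}$ and the right adjoint is $i^{-1}$, the reverse of the order on $\mathbf{A}$. This is why the triple is written $\langle\mathbf{A}^{+},d^{-1},i^{-1}\rangle$, in the form $\langle\mathbf{A},i,d\rangle$ with the left adjoint first. As a consistency check, $i^{-1}$ preserves $\cap$ and $A$, as a right adjoint must.
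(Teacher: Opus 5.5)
Your proof is correct and follows the paper's argument: the forward direction uses the unit $a\leq d(i(a))$ together with the upset property of $X$, and the reverse direction uses the counit $i(d(a))\leq a$ together with the upset property of $Y$. The only differences are that the paper writes the reverse direction by contraposition rather than directly, and that you spell out, via monotonicity, why $d^{-1}$ and $i^{-1}$ map upsets to upsets, which the paper calls routine.
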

\begin{proof}
    It is no hard to see that both, $d^{-1}$ and $i^{-1}$ are well defined. Now, in order to show that $d^{-1}$ is the left adjoint of $i^{-1}$, let $X,Y \in \mathrm{Up}(\mathbf{A})$. Assume that  $d^{-1}[X]\subseteq Y $ and consider $a \in X$. Since $a \leq d(i(a))$, from the assumption on $X$ we obtain $d(i(a)) \in X$. Thus, $i(a)\in d^{-1}[X]$ and by hypothesis we have $i(a)\in Y$. The other implication is proved by contraposition. So assume that $X\subseteq i^{-1}[Y]$. If $y\notin Y$, then because $i(d(y))\leq y$ and the assumtion on $Y$, we get $i(d(y))\notin Y$. So, $d(y)\notin X$ by assumption. Consequently, $y\notin d^{-1}[X]$. 
\end{proof}

\begin{definition}\label{G_T}
    Let $\langle X, \mathcal{K}, T \rangle$ be a ModS-space. We define the relation $G_T\subseteq X \times \mathcal{C_K}(X)$ by
    \begin{center}
       $ (x,Y) \in G_T \iff x\in \bigcap\{U\in S(X)\colon Y\in D_{\Box_T(U)}\}.$
    \end{center}
\end{definition}

\begin{proposition}\label{G_T sii G_box T*}
Let $\langle X, \mathcal{K}, T \rangle$ be a RelS-space. Then, for every $x\in X$ and $Y\in \mathcal{C}_{\mathcal{K}}(X)$, we have:
\[
(x,Y) \in G_T \iff (H_X(x), H_X[Y]) \in G_{\Box_T^{*}}.
\]
Thus, $G_{\Box^{\ast}_T}(H_X(x))=H_X[G_T(x)]$, for every $x\in X$.
\end{proposition}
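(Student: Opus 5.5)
We will unfold both sides of the equivalence and compare them through the dual isomorphism $\varphi\dashv\psi$ between filters and subbasic closed sets, together with the identifications of Lemma~\ref{lem: cosas de H_X}. Write $\mathbf{S}=\mathbf{S}(X)$. By \eqref{G_f}, the right-hand side
\[
(H_X(x),H_X[Y])\in G_{\Box_T^{*}}
\]
means $\psi_{\mathbf{S}}(H_X[Y])\subseteq (\Box_T^{*})^{-1}[H_X(x)]$. So the first step is to compute both sets explicitly.

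\emph{Computing $\psi_{\mathbf{S}}(H_X[Y])$.} By Lemma~\ref{lem: cosas de H_X}(1) and the bijectivity of $H_X$, we have $H_X[Y]\subseteq\beta_{\mathbf{S}}(U)=H_X[U]$ if and only if $Y\subseteq U$. Hence $\psi_{\mathbf{S}}(H_X[Y])=\{U\in S(X)\colon Y\subseteq U\}$, which is the filter of subbasic closed sets containing $Y$. Along the way we must check that $H_X[Y]\in\mathcal{C}_{\mathcal{K}_{\mathbf{S}}}(\mathcal{X}(\mathbf{S}))$. This holds because $Y=\bigcap\mathcal{A}$ with $\mathcal{A}\subseteq S(X)$, and $H_X[Y]=\bigcap_{A\in\mathcal{A}}\beta_{\mathbf{S}}(A)$ by injectivity.

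\emph{Computing $(\Box_T^{*})^{-1}[H_X(x)]$.} By \eqref{eq: H_X}, this set is $\{V\in S(X)\colon x\in\Box_T^{*}(V)\}$. The right-hand condition therefore reads: for every $V\in S(X)$, if $Y\subseteq V$ then $x\in\Box_T^{*}(V)$.

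\emph{Rewriting the left-hand side.} By Definition~\ref{G_T}, $(x,Y)\in G_T$ means that $x\in U$ for every $U\in S(X)$ with $Y\subseteq\Box_T(U)$. So the task is to show that, for fixed $x$ and $Y$, the following are equivalent:
\begin{enumerate}
\item[(a)] for all $U\in S(X)$, $Y\subseteq \Box_T(U)$ implies $x\in U$;
\item[(b)] for all $V\in S(X)$, $Y\subseteq V$ implies $x\in \Box_T^{*}(V)$.
\end{enumerate}
Recall that $\Box_T^{*}(V)=\bigcap F^T_V=\bigcap\{U\colon V\subseteq\Box_T(U)\}$ by Proposition~\ref{def box T^{*}} and Remark~\ref{interseccion}.

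\emph{(a)$\Rightarrow$(b).} Assume $Y\subseteq V$, and let $U$ satisfy $V\subseteq\Box_T(U)$. Then $Y\subseteq\Box_T(U)$, so $x\in U$ by (a). Since $U\in F^T_V$ was arbitrary, $x\in\bigcap F^T_V=\Box_T^{*}(V)$.

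\emph{(b)$\Rightarrow$(a).} This is the delicate direction, because $Y$ is only an intersection of subbasic closed sets and need not itself belong to $S(X)$. Assume $Y\subseteq\Box_T(U)$. We apply (b) with the choice $V=\Box_T(U)$, which lies in $S(X)$ by Definition~\ref{A relacion}(1). This gives $x\in\Box_T^{*}(\Box_T(U))$. The counit of the adjunction $\Box_T^{*}\dashv\Box_T$ gives $\Box_T^{*}(\Box_T(U))\subseteq U$, so $x\in U$. Choosing $V=\Box_T(U)$ avoids any compactness argument with (S4).

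\emph{The ``thus'' part.} This follows at once: the equivalence gives $H_X[G_T(x)]\subseteq G_{\Box_T^{*}}(H_X(x))$. For the reverse inclusion, we use that every element of $\mathcal{C}_{\mathcal{K}_{\mathbf{S}}}(\mathcal{X}(\mathbf{S}))$ has the form $H_X[Y]$ for some $Y\in\mathcal{C}_{\mathcal{K}}(X)$. This holds because $H_X$ is a homeomorphism mapping $S(X)$ onto $S(\mathcal{X}(\mathbf{S}))$ via Lemma~\ref{lem: cosas de H_X}(1), and $H_X$ commutes with intersections.

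The main obstacle is bookkeeping rather than any single hard step: keeping track of the identification of subbasic closed sets under $H_X$, and making sure the filter $\psi_{\mathbf{S}}(H_X[Y])$ is computed correctly for closed sets $Y$ that are not themselves subbasic.
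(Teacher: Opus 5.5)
Your proof is correct and follows essentially the paper's route. Like the paper, you unfold $G_{\Box_T^{*}}$ via $\psi_{\mathbf{S}(X)}$ and the identity $\beta_{\mathbf{S}(X)}(U)=H_X[U]$, then trade $\Box_T^{*}$ for $\Box_T$ using the adjunction. The only difference is that your equivalence (a)$\Leftrightarrow$(b) is exactly the instance of Lemma~\ref{adjuncion imagen inversa} for the upsets $\psi_{\mathbf{S}(X)}(H_X[Y])$ and $H_X(x)$, which you reprove inline via unit and counit instead of citing it. You also spell out both directions and the surjectivity onto closed sets needed for the ``thus'' clause, which the paper dismisses as ``analogous''.
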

\begin{proof}
   We only show the left-to-right implication, since the proof of the converse implication is analogous. Let $x\in X$ and $Y \in \mathcal{C_K}(X)$. If $(x,Y) \in G_T$, then, for every $U\in S(X)$, if $Y\subseteq \Box_T(U)$, then $x \in U$. To prove our claim, we need to show that $\psi_{\mathbf{S}(X)}(H_X[Y]) \subseteq{\Box_T^{*}}^{-1}[H_X(x)]$. Observe that, by Lemma \ref{adjuncion imagen inversa}, the latter is equivalent to ${\Box_T}^{-1}[\psi_{\mathbf{S}(X)}(H_X[Y])] \subseteq H_X(x)$. Thus, we prove this inclusion. Let $U\in S(X)$ and suppose that ${\Box_T}(U)\in \psi_{\mathbf{S}(X)}(H_X[Y])$. Then, $H_X[Y]\subseteq \beta_{\mathbf{S}(X)}({\Box_T}(U))$. By Lemma \ref{lem: cosas de H_X}(1) and the properties of $\beta_{\mathbf{S}(X)}$, we obtain $Y\subseteq \Box_T(U)$. Hence, by assumption, it follows that $x\in U$. Consequently, $U\in H_X(x)$. This concludes the proof.
\end{proof}

\begin{proposition}\label{prop: Rel-Sp to sigmaSp}
    Let  $\langle X, \mathcal{K}, T \rangle$ be a Rel-S space. Then $\langle X, \mathcal{K}, G_T \rangle$ is a $\sigma$mS-space.
\end{proposition}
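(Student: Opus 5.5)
The plan is to transport the $\sigma$mS-structure along the homeomorphism $H_X$, in the same way $R_T$ and $T_R$ were handled. By Proposition~\ref{def box T^{*}}, $\langle \mathbf{S}(X),\Box_T^{*},\Box_T\rangle$ is an SLata. In particular $\Box_T^{*}$ is a monotone operator on $\mathbf{S}(X)$, so by Proposition~\ref{G_m es sigmaSP} the structure $\langle \mathcal{X}(\mathbf{S}(X)),\mathcal{K}_{\mathbf{S}(X)},G_{\Box_T^{*}}\rangle$ is a $\sigma$mS-space. Proposition~\ref{G_T sii G_box T*} then gives $G_{\Box_T^{*}}(H_X(x))=H_X[G_T(x)]$. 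It therefore remains to check that conditions (1) and (2) of Definition~\ref{definicion sigmaSP} pass back through $H_X$.

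First I would record two transport facts, in the spirit of Lemma~\ref{lem: cosas de H_X}.
\begin{enumerate}
\item $H_X$ induces a bijection between $\mathcal{C}_{\mathcal{K}}(X)$ and $\mathcal{C}_{\mathcal{K}_{\mathbf{S}(X)}}(\mathcal{X}(\mathbf{S}(X)))$. This holds because $H_X$ is a bijection and, by Lemma~\ref{lem: cosas de H_X}(1), $H_X[U]=\beta_{\mathbf{S}(X)}(U)$, so $H_X$ preserves arbitrary intersections of subbasic closed sets.
\item Consequently $D_{\beta_{\mathbf{S}(X)}(U)}=\{H_X[Y]\colon Y\in D_U\}$, and the same correspondence holds for the complements $(D_U)^c$ taken inside $\mathcal{C}_{\mathcal{K}}(X)$.
\end{enumerate}

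For condition (1), I would show that $m_{G_{\Box_T^{*}}}(\beta_{\mathbf{S}(X)}(U))=H_X[m_{G_T}(U)]$. Indeed, $x\in m_{G_T}(U)$ if and only if some $Y\in G_T(x)$ satisfies $Y\subseteq U$. By the proposition and the facts above, this happens if and only if some $H_X[Y]\in G_{\Box_T^{*}}(H_X(x))$ satisfies $H_X[Y]\subseteq \beta_{\mathbf{S}(X)}(U)$. The proof of Proposition~\ref{G_m es sigmaSP} shows that the left-hand side equals $\beta_{\mathbf{S}(X)}(\Box_T^{*}(U))$. Hence $H_X[m_{G_T}(U)]=H_X[\Box_T^{*}(U)]$, and injectivity of $H_X$ gives $m_{G_T}(U)=\Box_T^{*}(U)\in S(X)$. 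This identity $m_{G_T}=\Box_T^{*}$ is worth stating explicitly, since it is what the isomorphism will later use.

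Condition (2) is where I expect the main obstacle, because it requires matching the indexing families exactly. Apply condition (2) for $G_{\Box_T^{*}}$ at the point $H_X(x)$: $G_{\Box_T^{*}}(H_X(x))$ is the intersection of the sets $(D_{\beta(U)})^c$ over those $U$ with $H_X(x)\notin m_{G_{\Box_T^{*}}}(\beta(U))$. By the previous step, the condition $H_X(x)\notin m_{G_{\Box_T^{*}}}(\beta(U))$ is equivalent to $x\notin m_{G_T}(U)$. Pulling back through the bijection of closed sets, and using that $H_X$ commutes with intersections and complements of these families, we obtain $H_X[G_T(x)]=H_X[\bigcap\{(D_U)^c\colon x\notin m_{G_T}(U)\}]$. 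Injectivity of the induced map on $\mathcal{C}_{\mathcal{K}}(X)$ then yields condition (2).

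The care required is to note that $H_X[\,\cdot\,]$ applied to families means elementwise image, and that complements are taken relative to the full sets of subbasic closed sets on each side. Both points are guaranteed by fact (1), since the induced map is a bijection between these two sets.
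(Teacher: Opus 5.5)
Your proposal is correct and follows essentially the same route as the paper: you transport the $\sigma$mS-structure of $\langle \mathcal{X}(\mathbf{S}(X)),\mathcal{K}_{\mathbf{S}(X)},G_{\Box_T^{*}}\rangle$ back along $H_X$, using Proposition~\ref{G_T sii G_box T*}, the identity $H_X[D_U]=D_{H_X[U]}$, and the equality $m_{G_{\Box_T^{*}}}(\beta_{\mathbf{S}(X)}(U))=H_X[m_{G_T}(U)]$. Your version is somewhat more explicit than the paper's. In particular, you extract $m_{G_T}=\Box_T^{*}$ from the proof of Proposition~\ref{G_m es sigmaSP}, whereas the paper only obtains this identity later, via uniqueness of left adjoints.
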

\begin{proof}
Let $U \in S(X)$. Since $\langle \mathbf{S}(X),\Box_T^{*}\rangle$ is a monotone semilattice, Proposition \ref{G_m es sigmaSP} yields that $\langle \mathcal{X}(\mathbf{S}(X)), \mathcal{K}_{\mathbf{S}(X)}, G_{\Box_T^{*}}\rangle$ is a $\sigma$mS-space. Hence, by the last part of Proposition \ref{G_T sii G_box T*}, condition (1) of Definition \ref{definicion sigmaSP} follows immediately. Now we prove condition (2) of Definition \ref{definicion sigmaSP}. Let $U \in S(X)$. By Lemma \ref{lem: cosas de H_X}(1), we have that $H_X[D_U]=D_{H_X[U]}$. Together with Proposition \ref{G_T sii G_box T*}, this yields
\[
m_{G_{\Box_T^{*}}}(\beta_{\mathbf{S}(X)}(U))=H_{X}[m_{G_T}(U)].
\]
Since $\langle \mathcal{X}(\mathbf{S}(X)), \mathcal{K}_{\mathbf{S}(X)}, G_{\Box_T^{*}}\rangle$ is a $\sigma$mS-space, the previous equality and the bijectivity of $H_X$ allow us to conclude that condition (2) of Definition \ref{definicion sigmaSP} holds. Therefore, $\langle X, \mathcal{K}, G_T \rangle$ is a $\sigma$mS-space, as desired.
\end{proof}

\begin{lemma}\label{S_G_T=T^{-1}}
    Let $\langle X, \mathcal{K}, T\rangle$ an Rel-S space. Then, for all $x,y\in X$,
    \[(x,y)\in T^{-1} \iff (x,\uparrow y) \in G_T. \]
\end{lemma}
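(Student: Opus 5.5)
The plan is to unfold both sides down to the defining property of a meet-relation, using that $T$ is a meet-relation (Corollary~\ref{A-rel is meet-rel}). I read $(x,y)\in T^{-1}$ as $(y,x)\in T$, that is, $x\in T(y)$.

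First, I note that $\uparrow y$ is a legitimate second coordinate for $G_T$. Relative to $\langle X,\sqsupseteq\rangle$ we have $\uparrow y=\mathrm{cl}_{\mathcal{K}}(y)=\bigcap\{U\in S(X)\colon y\in U\}$, which lies in $\mathcal{C}_{\mathcal{K}}(X)$. By Definition~\ref{G_T} together with \eqref{conjunto D_U},
\[
(x,\uparrow y)\in G_T \iff \text{for every } U\in S(X),\ \uparrow y\subseteq \Box_T(U) \text{ implies } x\in U .
\]

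The key step is the equivalence $\uparrow y\subseteq \Box_T(U)\iff y\in\Box_T(U)$.
\begin{itemize}
\item By Definition~\ref{A relacion}(1), $\Box_T(U)\in S(X)$.
\item If $y\in\Box_T(U)$, then $\Box_T(U)$ is one of the sets in the intersection defining $\mathrm{cl}_{\mathcal{K}}(y)$. Hence $\uparrow y=\mathrm{cl}_{\mathcal{K}}(y)\subseteq\Box_T(U)$.
\item The converse holds because $y\in\uparrow y$.
\end{itemize}
Moreover, $y\in\Box_T(U)$ iff $T(y)\subseteq U$, by the definition of $\Box_T$. Combining these, $(x,\uparrow y)\in G_T$ iff
\[
x\in\bigcap\{U\in S(X)\colon T(y)\subseteq U\}.
\]

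Finally, $T$ is a meet-relation by Corollary~\ref{A-rel is meet-rel}, so condition (2) of a meet-relation says precisely that this intersection equals $T(y)$. Therefore $(x,\uparrow y)\in G_T$ iff $x\in T(y)$ iff $(x,y)\in T^{-1}$.

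The only delicate point is the identification $\uparrow y=\mathrm{cl}_{\mathcal{K}}(y)$ and the resulting fact that every member of $S(X)$ containing $y$ contains all of $\uparrow y$. This comes directly from the definition of the specialization order and the description of closures recalled in the preliminaries, so I expect no real obstacle. Alternatively, one could transport the argument through Proposition~\ref{G_T sii G_box T*} and Lemma~\ref{lem: cosas de H_X}, but the direct route above seems shorter.
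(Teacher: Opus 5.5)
Your proof is correct, and it takes a genuinely different and more elementary route than the paper.

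\textbf{The paper's route.} The paper transports the problem to the canonical space $\mathcal{X}(\mathbf{S}(X))$ and argues in four steps.
\begin{enumerate}
\item From $(y,x)\in T$ it obtains $(H_X(y),H_X(x))\in N_{\Box_T}$ via Theorem~\ref{dualidad RS espaces}, i.e.\ $\Box_T^{-1}[H_X(y)]\subseteq H_X(x)$.
\item Lemma~\ref{adjuncion imagen inversa} rewrites this as $H_X(y)\subseteq(\Box_T^{*})^{-1}[H_X(x)]$.
\item Since $\psi_{\mathbf{S}(X)}(\uparrow H_X(y))=H_X(y)$, this says $(H_X(x),\uparrow H_X(y))\in G_{\Box_T^{*}}$.
\item Lemma~\ref{lem: cosas de H_X}(4) and Proposition~\ref{G_T sii G_box T*} pull this back to $(x,\uparrow y)\in G_T$.
\end{enumerate}

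\textbf{Your route.} You never leave $X$. Because $\Box_T(U)$ is subbasic closed, $\uparrow y=\mathrm{cl}_{\mathcal{K}}(y)\subseteq\Box_T(U)$ iff $y\in\Box_T(U)$. This collapses the family defining $G_T$ at $\uparrow y$ to $\{U\in S(X)\colon T(y)\subseteq U\}$, and meet-relation axiom (2) finishes the argument.

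\textbf{Comparison.} The paper's detour presents the lemma as a spatial trace of $\Box_T^{*}\dashv\Box_T$, which matches how it is used in Theorem~\ref{de Rel a SLatasp}. However, the adjunction enters twice in opposite directions: once in this proof and once inside Proposition~\ref{G_T sii G_box T*}. The two uses cancel. Your argument shows the adjunction is not needed at all: the lemma holds for every MoS-space, which is where Definition~\ref{G_T} already places $G_T$.

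\textbf{A caveat about the cited corollary.} Your computation actually shows more. For any $T$ with $\Box_T(U)\in S(X)$ for all $U\in S(X)$, one has $(x,\uparrow y)\in G_T$ iff $x\in\mathrm{cl}_{\mathcal{K}}(T(y))$. So the lemma is equivalent to every $T(y)$ lying in $\mathcal{C}_{\mathcal{K}}(X)$.

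Hence your appeal to Corollary~\ref{A-rel is meet-rel} carries the whole weight. The paper's proof depends on it just as much, through the MoS-space isomorphism in step 1.

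That corollary is not justified by the modality of $\Box_T$. The A-relation conditions only see $\Box_T$ on $S(X)$, and $\Box_T$ there cannot distinguish $T(y)$ from its closure. For instance, take a finite S-space with $\emptyset\in S(X)$, such as the dual of the three-element chain. Any constant relation $T(x)=Y$ with $Y\notin\mathcal{C}_{\mathcal{K}}(X)$ is then an A-relation, yet $(x,\uparrow y)\in G_T$ holds for every $x\in\mathrm{cl}_{\mathcal{K}}(Y)\neq Y$, so the lemma fails.

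So the statement genuinely requires RelS-spaces to carry meet-relations. This is a flaw in the paper's framework, not in your reasoning relative to the results the paper provides.
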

\begin{proof}
    We start by noticing that by Theorem \ref{Relational duality for SLatas}, $\langle \mathbf{S}(X), \Box_T^{\ast}, \Box_T\rangle$ is an SLata. So, in particular, $\langle \mathbf{S}(X),\Box_T\rangle$ is a modal semilattice. Suppose that $(x,y)\in T^{-1}$, then, by Theorem \ref{dualidad RS espaces}, it follows that $(H_X(y), H_X(x))\in N_{\Box_T}$. Thus, $\Box_T^{-1}[H_X(y)]\subseteq H_X(x)$ and by Lemma \ref{adjuncion imagen inversa}, we get $H_X(y)\subseteq {\Box_T^{\ast}}^{-1}[H_X(x)]$. Since $\psi_{\mathbf{S}(X)}[\uparrow H_X(y)]=H_X(y)$, then, the latter, \eqref{G_f} and Lemma \ref{lem: cosas de H_X}(4) implies that $(H_X(x),H_X(y))\in G_{{\Box_T^{\ast}}^{-1}}$. Hence, $(x, \uparrow y)\in G_T$. The proof of the converse is analogous.
\end{proof}

We now show that the structure induced by $G_T$ is sufficient to recover the left adjoint associated with the meet relation $T$.

\begin{theorem}\label{de Rel a SLatasp}
   Let $\langle X, \mathcal{K}, T\rangle$ be a RelS-space. Then $\langle X, \mathcal{K}, R_{G_T}, R_T\rangle$ is a SLata-space.
\end{theorem}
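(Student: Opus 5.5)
We must verify (A1)–(A3) for $I=R_{G_T}$ and $E=R_T$.

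For (A1), Theorem \ref{theo: MoS-spaces to Normal spaces} together with Corollary \ref{A-rel is meet-rel} shows that $\langle X,\mathcal{K},R_T\rangle$ is a (normal) mS-space. Lemma \ref{T=T_R_T}(2) gives $m_{R_T}=\Box_T$. Next, Proposition \ref{prop: Rel-Sp to sigmaSp} shows that $\langle X,\mathcal{K},G_T\rangle$ is a $\sigma$mS-space, so Proposition \ref{R_G y G_R} makes $\langle X,\mathcal{K},R_{G_T}\rangle$ an mS-space with $m_{R_{G_T}}=m_{G_T}$. The key identification is
\[
m_{G_T}(U)=\Box_T^{*}(U) \qquad \text{for every } U\in S(X).
\]
We obtain it by transport along $H_X$. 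The proof of Proposition \ref{prop: Rel-Sp to sigmaSp} gives $m_{G_{\Box_T^*}}(\beta_{\mathbf{S}(X)}(U))=H_X[m_{G_T}(U)]$. The proof of Proposition \ref{G_m es sigmaSP}, applied to the monotone operator $\Box_T^*$ on $\mathbf{S}(X)$, gives $m_{G_{\Box_T^*}}(\beta_{\mathbf{S}(X)}(U))=\beta_{\mathbf{S}(X)}(\Box_T^*(U))$. This equals $H_X[\Box_T^*(U)]$ by Lemma \ref{lem: cosas de H_X}(1). Since $H_X$ is injective, the claim follows.

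With $m_I=\Box_T^*$ and $m_E=\Box_T$, conditions (A2) and (A3) turn into the two halves of the adjunction $\Box_T^*\dashv\Box_T$ from Proposition \ref{def box T^{*}}: the unit $U\subseteq\Box_T(\Box_T^*(U))$ and the counit $\Box_T^*(\Box_T(U))\subseteq U$. Since the paper's (A2)/(A3) are stated with the multirelations directly, we must rewrite those quantifier conditions in terms of the operators.

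For the $E$-side we use Theorem \ref{theo: MoS-spaces to Normal spaces}: $Z\in R_T(x)$ iff $T(x)\cap Z\neq\emptyset$. Given $x\in U$ and $Z\in E(x)$, we pick $y\in T(x)\cap Z$. By the unit, $x\in\Box_T(\Box_T^*(U))$, so $y\in\Box_T^*(U)=m_{R_{G_T}}(U)$, that is, $I(y)\subseteq L_U$. Taking $w=y$ proves (A2).

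For (A3), assume every $Z\in I(x)$ contains some $y$ with $E(y)\subseteq L_U$, i.e. $y\in m_{R_T}(U)=\Box_T(U)$. Then every $Z\in I(x)$ meets $\Box_T(U)$, so $I(x)\subseteq L_{\Box_T(U)}$. Hence $x\in m_{R_{G_T}}(\Box_T(U))=\Box_T^*(\Box_T(U))\subseteq U$ by the counit.

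The main obstacle is the identification $m_{G_T}=\Box_T^*$. This relies on Proposition \ref{G_T sii G_box T*} and on correctly transporting $D_U$ and $\psi_{\mathbf{S}(X)}$ along $H_X$. If the transport argument proves delicate, we would instead verify the identification directly from Definition \ref{G_T}. For $\subseteq$: if $Y\in G_T(x)$ and $Y\subseteq U$, then every $V\in F^T_U$ satisfies $Y\subseteq U\subseteq\Box_T(V)$, hence $x\in V$, so $x\in\bigcap F^T_U=\Box_T^*(U)$. For $\supseteq$: take $Y=U$ itself, which lies in $\mathcal{C}_{\mathcal{K}}(X)$. Then $x\in\Box_T^*(U)$ means exactly that $x\in V$ whenever $U\subseteq\Box_T(V)$, which is precisely $(x,U)\in G_T$. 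This direct route seems cleaner, and we would adopt it.
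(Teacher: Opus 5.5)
Your proof is correct, but it follows a different route from the paper's. The paper does not verify (A2) and (A3) directly. It invokes Proposition~3.1 of \cite{GPZ}, which, given (A1), reduces being an SLata-space to $m_{R_{G_T}}\dashv m_{R_T}$ on $\mathbf{S}(X)$. It then proves the two halves for $m_{G_T}$ and $\Box_T$ separately:
\begin{itemize}
\item the counit $m_{G_T}(\Box_T(U))\subseteq U$ straight from Definition~\ref{G_T}, which is essentially your `$\subseteq$' half;
\item the unit $U\subseteq\Box_T(m_{G_T}(U))$ by taking $x\in U$ and $y\in T(x)$ and using Lemma~\ref{S_G_T=T^{-1}} to produce the witness $\uparrow x=\mathrm{cl}_{\mathcal{K}}(x)\in G_T(y)$, with $\uparrow x\subseteq U$.
\end{itemize}

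You instead identify $m_{G_T}=\Box_T^{*}$ outright: the witness $Y=U$ works because $U\in G_T(x)$ is literally the statement $x\in\bigcap F^T_U$. You then take the unit and counit from the adjunction already established in Proposition~\ref{def box T^{*}} and translate them into (A2) and (A3) by hand.

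What your route buys: it is more elementary and self-contained. It avoids both the external Proposition~3.1 and Lemma~\ref{S_G_T=T^{-1}}, whose proof goes through transport along $H_X$. It also gives $m_{R_{G_T}}=\Box_T^{*}$ immediately, which the paper only obtains afterwards, in its closing discussion, via uniqueness of left adjoints. What the paper's route buys: it is shorter on the page, and it exhibits how $G_T$ on principal closed sets encodes the converse relation $T^{-1}$.

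One small simplification: in (A2) you do not need the characterization $Z\in R_T(x)\iff T(x)\cap Z\neq\emptyset$ from Theorem~\ref{theo: MoS-spaces to Normal spaces}. Since $m_{R_T}=\Box_T$, the unit gives $x\in m_{R_T}(\Box_T^{*}(U))$. That says every $Z\in R_T(x)$ meets $\Box_T^{*}(U)=m_{R_{G_T}}(U)$, which is exactly (A2). This mirrors how you handled (A3).
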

\begin{proof}
First, notice that by Theorem \ref{theo: MoS-spaces to Normal spaces}, $\langle X, \mathcal{K},R_T\rangle$ is a mS-space. Furthermore, propositions \ref{prop: Rel-Sp to sigmaSp} and \ref{R_G y G_R}, imply that $\langle X, \mathcal{K}, R_{G_T}\rangle$ is a mS-space and $m_{R_{G_T}}=m_{G_T}$. By Lemma \ref{T=T_R_T} it also holds that $m_{R_T}=\Box_T$. So, in order to prove our claim, by Proposition 3.1 of \cite{GPZ}, we only need to prove that $\langle \mathbf{S}(X),m_{R_{G_T}}, m_{R_T}\rangle$ is a SLata. 
 
To that end, we will prove that $m_{G_T}(\Box_T(U))\subseteq U$ and $U \subseteq \Box_T(m_{G_T}(U))$, for every $U \in S(X)$. Let $U\in S(X)$. On the one hand, if $x \in m_{R_T}(\Box_T(U))$, from Definition \ref{definicion sigmaSP} we have that exists $Y \in G_T(x)$ such that $Y \subseteq \Box_T(U)$. Thus, from the above and by Definition \ref{G_T} we have that $x \in U$. For the converse, suppose that $x \in U$. In order to prove our claim, we need to show that $T(x)\subseteq m_{G_T}(U)$. Indeed, if $y \in T(x)$, then, by Lemma \ref{S_G_T=T^{-1}} we have that $(y, \uparrow x) \in G_T$. By hypothesis we know that $\uparrow x\subseteq U$, so since $\uparrow x=\mathrm{cl}_{\mathcal{K}}(x)$, from Definition \ref{definicion sigmaSP} we obtain that $y \in m_{G_T}(U)$. Hence, $\langle \mathbf{S}(X),m_{R_{G_T}}, m_{R_T}\rangle$ is a SLata and consequently, $\langle X, \mathcal{K}, R_{G_T}, R_T\rangle$ is a SLata-space.
\end{proof}

Based on the Theorem \ref{de Rel a SLatasp}, it is clear that the assignments
\[
\begin{array}{rcl}
   \langle X, \mathcal{K}, T\rangle & \mapsto & \langle X, \mathcal{K}, R_{G_T}, R_T\rangle, \\
   M & \mapsto & M, 
\end{array}
\]
define a functor \(\mathbf{P} : \mathsf{RelSP} \to \mathsf{SLataSp}\).
\\

Conversely, if $\langle X, \mathcal{K}, I,E\rangle$ is a SLata space, then since  $\langle \mathbf{S}(X),m_{I}, m_{E}\rangle$ is SLata by Theorem \ref{duality SLata}, then $m_{E}$ is a modal operator on $\mathbf{S}(X)$, we have that $\langle X, \mathcal{K}, E\rangle$ is a normal mS-space. Thus, it readily follows that the assignments
\[
\begin{array}{rcl}
    \langle X, \mathcal{K}, I,E\rangle  & \mapsto & \langle X, \mathcal{K},
T_E\rangle, \\
     M & \mapsto & M, 
\end{array}
\]
define a functor $\mathbf{Q}:\mathsf{SLataSp} \rightarrow \mathsf{RelSP}$.
\\

Now we prove the last result of the paper.

\begin{theorem}\label{last theorem}
    The categories $\mathsf{SLataSp}$ and $\mathsf{RelSP}$ are isomorphic.
\end{theorem}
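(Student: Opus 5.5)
We show that the functors $\mathbf{P}\colon\mathsf{RelSP}\to\mathsf{SLataSp}$ and $\mathbf{Q}\colon\mathsf{SLataSp}\to\mathsf{RelSP}$ are mutually inverse on the nose. Both functors are the identity on morphisms, so the first task is to check that morphisms are preserved. If $M$ is adjoint-preserving, Lemma~\ref{funtor morfirmo M} makes $\Box_M$ an SLata homomorphism between $\langle \mathbf{S}(Y),\Box_{T_Y}^{*},\Box_{T_Y}\rangle$ and $\langle \mathbf{S}(X),\Box_{T_X}^{*},\Box_{T_X}\rangle$. In the proof of Theorem~\ref{de Rel a SLatasp} we saw that these SLatas are exactly $\langle \mathbf{S}(X),m_{R_{G_T}},m_{R_T}\rangle$: indeed $m_{R_T}=\Box_T$ by Lemma~\ref{T=T_R_T}, and $m_{R_{G_T}}=m_{G_T}$ is a left adjoint of $\Box_T$, hence equals $\Box_T^{*}$ by uniqueness of adjoints. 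Therefore $\Box_M$ commutes with $m_{R_{G_T}}$ and $m_{R_T}$, which means $M$ is monotone for both multirelations, i.e.\ an SLata-relation.

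Conversely, let $M$ be an SLata-relation between $\langle X_1,\mathcal{K}_1,I_1,E_1\rangle$ and $\langle X_2,\mathcal{K}_2,I_2,E_2\rangle$. Then $\Box_M$ preserves $m_{I}$ and $m_{E}$. By Proposition~\ref{R=R_T_R}, $m_E=\Box_{T_E}$. Since $m_I\dashv m_E$ (Theorem~\ref{duality SLata}), uniqueness of left adjoints gives $m_I=\Box_{T_E}^{*}$; in particular $T_E$ is an A-relation, so $\mathbf{Q}$ is well defined on objects. Hence $\Box_M$ is an SLata homomorphism and, by Lemma~\ref{funtor morfirmo M}, $M$ is adjoint-preserving. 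Composition and identities agree in both categories (both use $\ast$ and $\sqsupseteq$), so $\mathbf{P}$ and $\mathbf{Q}$ are functors acting identically on hom-sets.

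For objects, the composite $\mathbf{Q}\mathbf{P}$ sends $\langle X,\mathcal{K},T\rangle$ to $\langle X,\mathcal{K},T_{R_T}\rangle$, and $T_{R_T}=T$ by Lemma~\ref{T=T_R_T}(1). Thus $\mathbf{Q}\mathbf{P}=\mathrm{Id}$.

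The composite $\mathbf{P}\mathbf{Q}$ sends $\langle X,\mathcal{K},I,E\rangle$ to $\langle X,\mathcal{K},R_{G_{T_E}},R_{T_E}\rangle$. We have $R_{T_E}=E$ by Proposition~\ref{R=R_T_R}, using that $E$ is normal. The main obstacle is showing $R_{G_{T_E}}=I$. The plan is to use that an mS-space multirelation is determined by its operator via (m2): both $R_{G_{T_E}}$ and $I$ satisfy (m2) with respect to their operators. Their operators coincide, since $m_{R_{G_{T_E}}}=m_{G_{T_E}}$ is a left adjoint of $\Box_{T_E}=m_E$, as is $m_I$, so both equal $\Box_{T_E}^{*}$. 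Because (m2) expresses $R(x)=\bigcap\{L_U : x\in m_R(U)\}$, equality of operators forces $R_{G_{T_E}}(x)=I(x)$ for every $x$. Hence $\mathbf{P}\mathbf{Q}=\mathrm{Id}$, and $\mathbf{P},\mathbf{Q}$ are inverse isomorphisms of categories.
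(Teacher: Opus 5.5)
Your proposal is correct and follows essentially the same route as the paper: $\mathbf{Q}\mathbf{P}=\mathrm{Id}$ comes from Lemma~\ref{T=T_R_T}(1), and for $\mathbf{P}\mathbf{Q}$ you obtain $R_{T_E}=E$ from Proposition~\ref{R=R_T_R} and $I=R_{G_{T_E}}$ from uniqueness of left adjoints. The differences are minor. You spell out the morphism-level checks and the fact that $T_E$ is an A-relation, both of which the paper treats as clear. You also conclude $I=R_{G_{T_E}}$ directly from (m2), whereas the paper passes through $R_{m_I}$ and Theorem~\ref{duality Msp mMS}; your version is slightly more direct.
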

\begin{proof}
We will prove the following: (a) $\mathbf{Q}\mathbf{P}$ coincides with the identity functor on $\mathsf{RelSP}$; and (b) $\mathbf{P}\mathbf{Q}$ coincides with the identity functor on $\mathsf{SLataSp}$. From the definitions of $\mathbf{P}$ and $\mathbf{Q}$, it is clear that it suffices to verify these equalities on objects. So, observe that (a) follows from Lemma \ref{T=T_R_T}(1). On the other hand, let $\langle X, \mathcal{K}, I,E\rangle$ be a SLata-space. Then we get that 
\[\mathbf{P}\mathbf{Q}(\langle X, \mathcal{K}, I,E\rangle)=\langle X, \mathcal{K}, R_{G_{T_E}},R_{T_E}\rangle.\]
By Proposition \ref{R=R_T_R}, we get $E=R_{T_E}$ and consequently $m_{E}=m_{R_{T_E}}$. Since $\langle \mathbf{S}(X),m_{I}, m_{E}\rangle$ is SLata, the uniqueness of the left adjoint, entails $m_I=m_{R_{G_{T_E}}}$ and therefore, $R_{m_I}=R_{m_{R_{G_{T_E}}}}$. From this fact, together with Theorem \ref{duality Msp mMS}, it is no hard to see that $I=R_{G_{T_E}}$. Hence, $\mathbf{P}$ and $\mathbf{Q}$ establish the desired isomorphism.
\end{proof}

We conclude with a brief discussion on the algebraic meaning of Theorem \ref{last theorem}. Let $\langle X, \mathcal{K}, T\rangle$ be a RelS-space. By Lemma \ref{T=T_R_T}, we have that $m_{R_T}=\Box_T$. Moreover, Theorem \ref{de Rel a SLatasp} shows that $m_{R_{G_T}}=m_{G_T}$ is left adjoint to $\Box_T$. Since left adjoints are unique whenever they exist, it follows that
\[
m_{R_{G_T}}=\Box_T^{*}.
\]
Hence, the SLata naturally induced by the relational structure coincides with the SLata arising from the adjunction associated with $T$, namely,
\[
\langle \mathbf{S}(X), \Box_T^{*}, \Box_T \rangle
=
\langle \mathbf{S}(X),m_{R_{G_T}}, m_{R_T}\rangle.
\]

Conversely, let $\langle X, \mathcal{K}, I,E\rangle$ be a SLata-space. Since $\langle X, \mathcal{K}, E\rangle$ is a normal mS-space, we obtain that $\langle X, \mathcal{K}, T_E\rangle$ is a RelS-space. Furthermore, by Lemma \ref{T=T_R_T} and the uniqueness of left adjoints, we get
\[
\langle \mathbf{S}(X), m_I, m_E \rangle
=
\langle \mathbf{S}(X), \Box_{T_E}^{*}, \Box_{T_E} \rangle.
\]

Therefore, Theorem \ref{last theorem} shows that the relational structure of a RelS-space completely determines, and is completely determined by, a pair of adjoint operators on a semilattice. In particular, the modal operator $\Box_T$ and its left adjoint $\Box_T^{*}$ can be completely recovered from the associated normal and monotone relations $R_T$ and $R_{G_T}$, respectively. Reciprocally, the relational structure of a SLata-space is entirely determined by the adjunction $(m_I,m_E)$. Thus, the isomorphism between $\mathsf{RelSP}$ and $\mathsf{SLataSp}$ provides a relational-topological representation of semilattices equipped with adjunctions.


-----------------------------------------------------------------------------------
\\
Belén Gimenez, \\
Departamento de Matem\'atica,\\
Facultad de Ciencias Exactas (UNLP),\\
50 y 115, La Plata (1900)\\
belengim.28@gmail.com

-----------------------------------------------------------------------------------
\\
William Zuluaga,\\
Facultad de Ciencias Exactas (UNCPBA),\\
Pinto 399, Tandil (7000),\\
and CONICET, Argentina,\\
wizubo@gmail.com

\end{document}